\documentclass[12pt]{article}

\usepackage{fancyhdr}
\usepackage{setspace}
\usepackage{amsmath}
\usepackage{amsfonts}
\usepackage{hyperref}
\usepackage{amssymb}
\usepackage{mdwlist}
\usepackage{amscd}
\usepackage{lipsum}
\usepackage{mathrsfs}
\usepackage{mdframed}
\usepackage{amsthm}
\usepackage{enumitem}
\usepackage{verbatim}
\usepackage{graphicx}
\usepackage{epstopdf}
\usepackage[utf8]{inputenc}
\usepackage[english]{babel}
\usepackage{float}
\usepackage{stmaryrd}
\usepackage{tikz}
\usepackage{comment}
\usepackage{csquotes}
\usepackage{cleveref}
\usepackage[auth-sc]{authblk}

\setenumerate[0]{label=(\alph*)}

\setlength{\headheight}{15pt}

\theoremstyle{plain}

\crefname{claim}{claim}{claims}
\newtheorem{thm}{Theorem}
\crefname{thm}{theorem}{theorems}
\newtheorem{lem}[thm]{Lemma}
\crefname{lem}{lemma}{lemmas}
\newtheorem{prop}[thm]{Proposition}
\crefname{prop}{proposition}{propositions}

\crefname{exer}{exercise}{exercises}
\newtheorem*{cor}{Corollary}
\crefname{cor}{corollary}{corollaries}

\theoremstyle{definition}
\newtheorem*{defn}{Definition}
\crefname{defn}{definition}{definitions}
\newtheorem*{qn}{Question}
\newtheorem*{qns}{Questions}
\newtheorem{obs}[thm]{Observation}
\theoremstyle{remark}
\newtheorem{rmk}{Remark}[section]
\crefname{rmk}{remark}{remarks}

\def\Fraisse{Fra\"{i}ss\'{e}}

\def\Q{\mathbb{Q}}
\def\Z{\mathbb{Z}}

\def\O{\mathcal{O}}

\DeclareFontFamily{U}{min}{}
\DeclareFontShape{U}{min}{m}{n}{<-> udmj30}{}

\def\id{\operatorname{id}}
\def\from{:}

\newcommand{\paren}[1]{\left( #1 \right)}
\renewcommand{\brack}[1]{\left[ #1 \right]}
\renewcommand{\brace}[1]{\left\{ #1 \right\}}

\newcommand{\absval}[1]{\left| #1 \right|}

\newcommand{\Aut}[1]{\operatorname{Aut} \paren{#1}}

\newcommand{\dom}[1]{\operatorname{dom} \paren{ #1 }}
\newcommand{\ran}[1]{\operatorname{range} \paren{ #1 }}
\newcommand{\dur}[1]{\operatorname{dur} \paren{ #1 }}
\renewcommand{\sp}[1]{\operatorname{sp} \paren{ #1 }}
\newcommand{\parity}[1]{\operatorname{par} \paren{ #1 }}

\def\tp{\operatorname{tp}}

\def\defiff{\stackrel{\textrm{def}}{\iff}}

\newcommand{\orb}[2]{\O_{#1} \paren{#2}}
\newcommand{\OQ}[2]{\O_{#1} \brack{#2}}

\def\dhr{{\downharpoonright}}

\def\isom{\cong}

\hypersetup{
    colorlinks=true,
    linkcolor=red,
    urlcolor=blue
}

\title{The structure of generic automorphisms of the random poset}
\author{Dakota Thor Ihli\thanks{Department of Mathematics, University of Illinois at Urbana-Champaign, Urbana, Illinois. Email: \href{mailto:dihli2@illinois.edu}{\texttt{dihli2@illinois.edu}}.}}
\date{\vspace{-4ex}}

\usepackage[margin=2.75cm]{geometry}
\usepackage[labelfont=sc]{caption}

\numberwithin{thm}{section}

\begin{document}

\maketitle
\begin{abstract}
    We examine properties of generic automorphisms of the random poset, with the goal of explicitly characterizing them. We associate to each automorphism an auxiliary first-order structure, consisting of the random poset equipped with an infinite sequence of binary relations which encode the action of the automorphism. We then explicitly characterize generic automorphisms in terms of properties of this structure. Two notable such properties are ultrahomogeneity, and universality for a certain class of finite structures in this language. As this auxiliary structure seems to be new, we also address some model-theoretic questions. In particular, this structure fails to be saturated, and its theory neither is $\omega$-categorical nor admits quantifier-elimination, in contrast to many known ultrahomogeneous structures in finite languages.
    
    We also examine orbitals --- order-convex hulls of orbits --- and their use in describing automorphisms. In particular, we introduce and use new orders on the space of orbitals.
\end{abstract}
\tableofcontents

\section{Introduction}

A central question in the study of any Polish group is whether the group admits a comeagre conjugacy class, whose elements have come to be called \textit{generic elements}. The class of \textit{non-archimedean} groups is heavily studied, due to the connections with model theory. Indeed, the non-archimedean Polish groups precisely coincide\footnote{By definition, a topological group is non-archimedean if it admits a local basis at identity of clopen subgroups. The usual characterization is that a Polish group is non-archimedean precisely when it is topologically isomorphic to a closed subgroup of $S_{\infty}$.} with the automorphism groups of the highly homogeneous first-order structures known as \textit{\Fraisse{} limits}.

Many automorphism groups of these structures not only admit generic elements, but also those generic elements admit concrete descriptions, often in terms of the pointwise combinatorics of the action of the automorphism on the structure. In contrast, there are other groups of automorphisms which admit generics, but the method of proving this is more indirect. Indeed, Truss \cite{Tru92}, and later Kechris \& Rosendal \cite{KR04}, have identified amalgamation conditions on the set of finite partial automorphisms of the \Fraisse{} limit, which imply the existence of generics without constructing them.

Here, we are concerned with the case of the \textit{random poset} --- the unique (up to isomorphism) universal ultrahomogeneous countable poset, which we denote throughout by $\mathbf{P}$. Indeed, $\mathbf{P}$ is a \Fraisse{} limit; moreover, as the name suggests, it may also be constructed probabilistically, as described by Droste \& Kuske \cite{DK03}. Using the indirect methods alluded to above (and elaborated on later), Kuske \& Truss \cite{KT01} showed that $\Aut{\mathbf{P}}$ admits a comeagre conjugacy class. The main result of the present paper is a more concrete description of the generic elements, one which does not mention finite partial automorphisms.

We achieve this by associating to each $f \in \Aut{\mathbf{P}}$ an auxiliary structure $\mathbf{P}_{f}$, defined by equipping $\mathbf{P}$ with $\aleph_{0}$-many binary relations that encode the action of $f$ on $\mathbf{P}$. This structure captures the conjugacy relation precisely; that is, $f$ and $g$ are conjugate in $\Aut{\mathbf{P}}$ if and only if $\mathbf{P}_{f} \isom \mathbf{P}_{g}$. Our description of generic automorphisms is then given in terms of properties of the structure $\mathbf{P}_{f}$, which we now briefly describe.

To facilitate the study of automorphisms, we collect several tools for describing their actions on $\mathbf{P}$. For example, using the order on $\mathbf{P}$ allows one to quantify when, and how, a point is moved ``upward'' or ``downward'' by an automorphism. Indeed, we let the \textit{spiral length} of $x \in \mathbf{P}$ under $f \in \Aut{\mathbf{P}}$ be the least integer $n \geq 1$ for which $x$ and $f^{n} \paren{x}$ are comparable, and we say $x$ has infinite spiral length if no such $n$ exists. We then say $x$ has \textit{parity} $+1$ or $-1$ if $x < f^{n} \paren{x}$ or $x > f^{n} \paren{x}$, respectively. If either $f^{n} \paren{x} = x$, or else if $x$ has infinite spiral length, we say $x$ has parity $0$. The notions of parity and spiral length were introduced in \cite{KT01}; in \cref{sec:GeneralPosets}, we expand on their basic properties in the general framework of arbitrary posets.

Furthermore, for each $f \in \Aut{\mathbf{P}}$ and $i \in \Z$, we define the binary relation $b_{i}^{f}$ on $\mathbf{P}$ by letting $b_{i}^{f} \paren{x,y} \defiff x \leq f^{i} \paren{y}$. Then $\mathbf{P}_{f}$ is defined to be the first-order structure in the language $L := \brace{b_{i} : i \in \Z}$ obtained by equipping $\mathbf{P}$ with the $b_{i}^{f}$ relations. For convenient terminology, we often identify $b_{i}^{f} \paren{x,y}$ with its truth value in $\brace{0,1}$ and let $\mathbf{b}^{f} \paren{x,y} := \paren{b_{i}^{f} \paren{x,y}}_{i \in \Z}$ denote the resulting bi-infinite binary sequence. The $b_{i}$'s were also defined in \cite{KT01}, but they were not treated as a first-order language as they are here. As such, the structure $\mathbf{P}_{f}$ is new, and it is of independent interest.

Luckily, there are situations when we are able to determine the infinite behavior of $\mathbf{b}^{f} \paren{x,y}$ using only finite information. For example, suppose $x,y \in \mathbf{P}$ with $x < y$. If $f \in \Aut{\mathbf{P}}$, and $c \in \mathbf{P}$ is a fixed point of $f$ with $x < c < y$, then $x < c = f^{i} \paren{c} < f^{i} \paren{y}$ for every $i$, hence $b_{i}^{f} \paren{x,y} = 1$ for all $i \in \Z$. More examples of these finite configurations are collected in \cref{subsec:UsefulCombinatorialCriteria}. Conversely, one may want certain infinite patterns to be witnessed by finite configurations. We will show that this does, in fact, occur for generic automorphisms.

We are now able to sketch our characterization.
\begin{thm}\label{CharacterizationTheoremSketch}
    An automorphism $f \in \Aut{\mathbf{P}}$ is generic if and only if the following hold:
    \begin{enumerate}[label=(\Alph*)]
        \item $f$ has dense conjugacy class;
        \item The structure $\mathbf{P}_{f}$ is ultrahomogeneous (as an $L$-structure);
        \item For all $x \in \mathbf{P}$ with non-zero parity, the sequence $\mathbf{b}^{f} \paren{x,x}$ is eventually constant on both sides;
        \item\label{VagueDStatement} For all $x,y \in \mathbf{P}$, there exists a certain finite configuration, called tightening spirals, that forces $\mathbf{b}^{f} \paren{x,y}$ to be periodic on both sides.
    \end{enumerate}
\end{thm}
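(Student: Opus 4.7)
The plan rests on an easy dictionary: $f, g \in \Aut{\mathbf{P}}$ are conjugate if and only if $\mathbf{P}_{f} \isom \mathbf{P}_{g}$ as $L$-structures. Indeed $b_{0}$ encodes the order on $\mathbf{P}$, so any $L$-isomorphism is in particular an order automorphism; and preservation of each $b_{i}$ is then equivalent to the intertwining $\phi \circ f^{i} = g^{i} \circ \phi$, using ultrahomogeneity of $\mathbf{P}$ to recover points from their upper sets. I would prove the two directions of the characterisation separately, using this correspondence to translate genericity of $f \in \Aut{\mathbf{P}}$ into a statement about the isomorphism type of $\mathbf{P}_{f}$.

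For the direction generic $\Rightarrow$ (A)--(D), the goal is to show that each of the four conditions cuts out a comeagre subset of $\Aut{\mathbf{P}}$; the comeagre conjugacy class produced by Kuske--Truss \cite{KT01} must then lie in all four. Condition (A) is immediate. For (B), I would decompose ultrahomogeneity of $\mathbf{P}_{f}$ as a countable conjunction of ``extend this finite partial $L$-automorphism of $\mathbf{P}_{f}$ to a total one'' statements; each is $G_{\delta}$ in $f$, and density comes from the Kechris--Rosendal amalgamation apparatus of \cite{KT01, KR04}. For (C), I would use the combinatorial criteria of \cref{subsec:UsefulCombinatorialCriteria} to argue that, for each $x \in \mathbf{P}$, the existence of a finite witness forcing $\mathbf{b}^{f}\paren{x,x}$ to be eventually constant is a dense-open condition on $f$, then intersect over $x$ in a countable set. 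Condition (D) is analogous, using tightening spirals in place of eventual-constancy witnesses to control pairs $x, y$.

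For the direction (A)--(D) $\Rightarrow$ generic, given $f, g$ both satisfying the four conditions, I would build an $L$-isomorphism $\phi \colon \mathbf{P}_{f} \to \mathbf{P}_{g}$ by back-and-forth and then invoke the dictionary to conclude $f, g$ are conjugate; combined with the forward direction (which exhibits some generic witness of (A)--(D)), this forces $f$ itself to be generic. The back-and-forth step extends a finite partial $L$-isomorphism $\phi_{0} \colon A_{0} \to B_{0}$ by a new point $x \in \mathbf{P}_{f}$. Here (C) and (D), applied to $x$ with each $a \in A_{0}$, guarantee that the bi-infinite sequences $\mathbf{b}^{f}\paren{x,a}$, $\mathbf{b}^{f}\paren{a,x}$, and $\mathbf{b}^{f}\paren{x,x}$ are pinned down by a finite amount of witness data, so the quantifier-free $L$-type of $x$ over $A_{0}$ is already realised inside some finite $L$-substructure of $\mathbf{P}_{f}$. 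Condition (A) for $g$ ensures this finite structure embeds over $B_{0}$ into $\mathbf{P}_{g}$, and condition (B) for $g$ allows us to move the embedded copy into position on top of $B_{0} = \phi_{0}\paren{A_{0}}$, yielding the desired image of $x$.

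The main obstacle I expect is the verification that $\Age{\mathbf{P}_{f}} = \Age{\mathbf{P}_{g}}$ under (A)--(D): because $L$ is infinite, a ``finite'' $L$-substructure of $\mathbf{P}_{f}$ still carries infinitely much information about $\mathbf{b}^{f}$, and one must identify the intrinsic class of admissible finite $L$-patterns in terms of parity, spiral length (\cref{sec:GeneralPosets}), and tightening spirals. The content of (A) is that this class is fully realised in $\mathbf{P}_{f}$, while (C) and (D) are precisely what makes the class finitely describable. Once this common age is pinned down, the back-and-forth argument, as well as the density/$G_{\delta}$-ness claims in the forward direction, both go through cleanly.
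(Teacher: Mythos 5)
Your high-level architecture (forward direction by Baire-category arguments, converse by pinning down the conjugacy class via the structures $\mathbf{P}_{f}$) is broadly parallel to the paper's, and your opening dictionary is exactly \Cref{ConjugateIffLIsomorphic}. But there are two genuine gaps, and they are precisely where the paper's technical work lives. First, condition (B) cannot be dispatched by writing ultrahomogeneity of $\mathbf{P}_{f}$ as a countable conjunction of extension statements whose density ``comes from the Kechris--Rosendal amalgamation apparatus.'' The hypothesis of each such statement --- ``$\gamma$ is a finite partial $L$-isomorphism of $\mathbf{P}_{f}$'' --- depends on infinitely many values $b_{i}^{f}$, hence on the whole of $f$, and is not decided on any basic open set $\brack{q}$; so the sets you describe are not visibly dense-open (or even of dense interior), and no citation supplies this. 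The paper instead proves (B) only for generic $f$ (\Cref{GenericPfIsUltrahomogeneous}), by a back-and-forth that uses membership in the specific dense sets $D \paren{p}$ and $E \paren{p,p'}$ of the Truss/Kuske--Truss scheme, and whose one-point extension step requires the amalgamation \Cref{StrongAmalgamationProperty}, inserted fixed points, ``M''-configurations, tightening spirals, and a choice of a large $N$ that is only available because (C) and (D) have already been established for generics. In particular the order matters: (B) is derived \emph{after} (C) and (D), not in parallel with them as your plan suggests.

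Second, in your converse direction the step you flag as ``the main obstacle'' is not a bookkeeping issue but the actual theorem: to embed the finite $L$-substructure $\paren{A_{0} \cup \brace{x}, \mathbf{b}^{f}}$ into $\mathbf{P}_{g}$ via \Cref{DenseClassImpliesUniversality}, you must first realize it as $\paren{\dur{q}, \mathbf{b}^{h}}$ for some finite $q \in \mathcal{P}$ and $h \in \brack{q} \cap D \paren{q}$, i.e.\ you must show that for $f$ satisfying (A)--(D), every finite $F \subseteq \mathbf{P}$ is contained in $\dur{q}$ for some \emph{determined} $q \subseteq f$. That is exactly the $D \paren{p}$ half of the paper's \Cref{SigmaIsGeneric}, which rests on \Cref{PropertiesOfSigma} --- the converses to \Cref{FixedPointBetweenForcesStronglyLess,NLemma,MLemma} and the tightening-spiral periodicity bookkeeping --- and note that those converses themselves consume (A) and (B), not only (C) and (D). Your proposal names this obstacle but offers no argument for it, so as it stands neither direction is proved; filling both gaps would essentially reproduce the paper's two main proofs (the paper's converse, incidentally, avoids your explicit conjugacy-building back-and-forth by verifying membership in the $E \paren{p,p'}$'s and $D \paren{p}$'s directly, which is a cleaner route once universality and ultrahomogeneity are in hand).
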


We refer to \cref{subsec:CharacterizingGenericAutomorphisms} for details, including the precise statement of \Cref{CharacterizationTheoremSketch} --- in particular, \Cref{GenericIsInSigma,SigmaIsGeneric} state sufficiency and necessity, respectively. We remark that the density of the conjugacy class of $f$ implies a certain amount of universality for $\mathbf{P}_{f}$; that is, many finite $L$-structures can be realized as substructures of $\mathbf{P}_{f}$.

Another tool we make use of are \textit{orbitals}, which were used in \cite{Tru92} as convenient terminology to describe generic automorphisms of the linear order $\Q$. The notion (which immediately generalizes to arbitrary posets) is even more valuable here. We define the orbital of a point $x$ under an automorphism $f$ to be the order-convex hull of the $f$-orbit of $x$; see \cref{subsec:Orbitals} for details. Once $f$ is fixed, orbitals partition the poset, and we call the resulting quotient the \textit{orbital quotient}. The parity of a point is orbital-invariant, so we may define the parity of an orbital to be the parity of its points. Furthermore, an orbital of parity $0$ consists of a single orbit, so spiral length is also orbital-invariant in the parity $0$ case. Orbitals of non-zero parity do not enjoy this property, however; for generic $f \in \Aut{\mathbf{P}}$, this fails in the most extreme way possible.
\begin{thm}\label{AllSpiralLengthsSketch}
    For generic $f \in \Aut{\mathbf{P}}$, all orbitals of non-zero parity admit points of all finite spiral lengths.
\end{thm}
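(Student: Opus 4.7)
The plan is to fix a generic $f \in \Aut{\mathbf{P}}$, an orbital $O$ of $f$ of non-zero parity (WLOG parity $+1$, since the parity $-1$ case follows by order-duality of the random poset), some $x \in O$ of spiral length $m$, and a target integer $n \geq 1$, and then to produce some $z \in O$ of spiral length exactly $n$. To force $z \in O$ it suffices to arrange $x < z < f^{m}(x)$, which places $z$ in the order-convex hull of the orbit of $x$. To force spiral length $n$ with parity $+1$, I need $z$ incomparable to each of $f(z), \ldots, f^{n-1}(z)$ and $z < f^{n}(z)$. All of these are quantifier-free conditions in the language $L$ about the finite set $S := \{f^{k}(x) : |k| \leq K\} \cup \{f^{j}(z) : 0 \leq j \leq n\}$ for some sufficiently large $K \geq m+n$.

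The core of the argument is then to exhibit a finite $L$-structure $F^{+}$ on $S$ that (i) extends the $L$-substructure $F$ of $\mathbf{P}_{f}$ supported on $\{f^{k}(x) : |k| \leq K\}$, (ii) realizes the above constraints on $z$, and (iii) lies in the age of $\mathbf{P}_{f}$. If such an $F^{+}$ exists, then by ultrahomogeneity of $\mathbf{P}_{f}$ (clause (B) of \Cref{CharacterizationTheoremSketch}), the inclusion $F \hookrightarrow \mathbf{P}_{f}$ extends to an embedding $F^{+} \hookrightarrow \mathbf{P}_{f}$, and the image of $z$ is the required point. The natural recipe for $F^{+}$ is to declare $z$ comparable with the iterates of $x$ only in the range forced by $x < z < f^{m}(x)$, to propagate this by $f$-equivariance to the iterates $f^{j}(z)$, and to fill in the bi-infinite sequences $\mathbf{b}(u,v)$ between elements of $S$ in a way that is eventually constant on both sides (in accordance with clause (C)) and periodic (in accordance with clause (D)).

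The main obstacle is precisely the verification of (iii): that the resulting $F^{+}$ can indeed be realized as a substructure of $\mathbf{P}_{g}$ for some generic $g$. Concretely, this reduces to exhibiting a finite partial automorphism of a finite poset on $S$ that matches the prescribed $L$-relations and amalgamates with the finite partial automorphism already inducing $F$ inside $\mathbf{P}_{f}$. I expect this to be doable by hand using the finite combinatorial criteria collected earlier in the paper, with the bulk of the bookkeeping going into verifying transitivity of the extended poset and consistency of the $b_{i}$-sequences under the implicit $f$-shift. The degenerate case $n = m$ is trivial since $x$ itself witnesses the claim, so no collision between $z$ and the orbit of $x$ arises in the non-trivial cases (spiral length is orbit-invariant, forcing the image of $z$ to lie outside the orbit of $x$ whenever $n \neq m$). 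Once realizability is established, the transfer to $\mathbf{P}_{f}$ is automatic from the characterization.
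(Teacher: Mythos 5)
There is a genuine gap, and it occurs at two levels. First, the concrete configuration you propose is inconsistent whenever the target spiral length $n$ exceeds the spiral length $m$ of the base point $x$ you chose in the orbital: from $x < z$ you get $f^{m}\paren{x} < f^{m}\paren{z}$, and combined with $z < f^{m}\paren{x}$ this forces $z < f^{m}\paren{z}$ by transitivity, so $\sp{z,f} \leq m$. Hence the finite $L$-structure $F^{+}$ you want (with $z \perp f^{j}\paren{z}$ for all $1 \leq j < n$) simply does not exist for $n > m$, and no appeal to ultrahomogeneity can rescue it. The paper's proof of \Cref{NonZeroParityHasAllSpiralLengths} is structured precisely to dodge this: it first produces a point of spiral length $1$ in the orbital (sandwiching a new point $a_{0}$ between $f^{k}\paren{x}$ and $f^{k+n}\paren{x}$, which is consistent because $1 \leq n$), and only then, starting from a point whose orbit is a chain, builds a point of arbitrary spiral length $N$ using a configuration in which only $a_{0}$ is sandwiched between $f^{k}\paren{x}$ and $f^{k+N}\paren{x}$ while the remaining $a_{i}$ sit above $f^{k+i}\paren{x}$ and are pairwise incomparable, so transitivity forces nothing unwanted.

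Second, the step you explicitly defer --- verifying that $F^{+}$ lies in the age of $\mathbf{P}_{f}$ --- is where essentially all of the content of the theorem lives, and it is not mere bookkeeping. Membership in the age (via \Cref{DenseClassImpliesUniversality}) requires exhibiting a finite partial automorphism whose extensions are \emph{forced} to realize the prescribed bi-infinite $\mathbf{b}$-sequences, i.e.\ a determined configuration, and checking that the prescribed relations between the new points and $\dur{p}$ create no new relations among old points. The paper does this by choosing $k$ large enough, using property \ref{OrbitalIsRandom} (eventual constancy of $\mathbf{b}^{f}\paren{\cdot,x}$ and $\mathbf{b}^{f}\paren{x,\cdot}$), so that each new point $a_{i}$ can be given the same quantifier-free order type as $f^{k+i}\paren{x}$ over $\dur{p} \setminus f^{\Z}\paren{x}$; it then realizes the configuration inside $f$ itself through a determined extension $p'$ and the genericity set $E\paren{p,p'}$, rather than through ultrahomogeneity of $\mathbf{P}_{f}$ alone. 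Without the choice of $k$ and the type-matching condition, your amalgamation step can fail, and without the two-stage reduction your target structure need not exist at all.
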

This is stated more precisely as \Cref{NonZeroParityHasAllSpiralLengths}.

The orbital quotient inherits order structure from the original poset. Indeed, Truss's characterization of the generic automorphisms of $\Q$ was in terms of this order's properties. To generalize to arbitrary partial orders, we introduce two novel notions of order on the orbital quotient, called the \textit{strong} and \textit{weak} orders. As the name suggests, orbitals which are related in the strong order are related in the weak order as well; moreover, both generalize the notion used by Truss, as they coincide when the underlying poset is in fact a linear order. For details, we refer to \cref{subsec:OrdersOnOrbitals}.

The orders on orbital quotients allow a natural analogue of Truss's result about $\Q$ to be formulated for $\mathbf{P}$, and we are able to prove one direction of this analogue.
\begin{thm}\label{QuotientIsRandomSketch}
    Let $f \in \Aut{\mathbf{P}}$ be generic. The orbital quotient of $\mathbf{P}$ induced by $f$, equipped with the strong order, is isomorphic to the random poset. Furthermore, for any possible parity (and for parity $0$, spiral length), the orbitals of that parity (and for parity $0$, that spiral length) are dense in the orbital quotient.
\end{thm}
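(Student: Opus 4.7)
The plan is to verify, for the orbital quotient $Q$ under generic $f$ equipped with the strong order, the standard one-point extension characterization of the random poset: given any finite pairwise disjoint collections of orbitals $\mathcal{L}, \mathcal{U}, \mathcal{I}$ with $\mathcal{L} < \mathcal{U}$ in the strong order, construct an orbital $\mathcal{Z}$ lying strongly between $\mathcal{L}$ and $\mathcal{U}$ and strongly incomparable to every orbital in $\mathcal{I}$. To fold in the density assertion simultaneously, I would also prescribe a target parity (and a finite or infinite spiral length in the parity $0$ case) and require $\mathcal{Z}$ to be of that type.

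The orbital $\mathcal{Z}$ is to be produced inside $\mathbf{P}_{f}$ as the orbit of some new point $z \in \mathbf{P}$. Fix one representative $x$ from each orbital in $\mathcal{L} \cup \mathcal{U} \cup \mathcal{I}$, yielding a finite $X \subseteq \mathbf{P}$ and an induced finite $L$-substructure $S_{0}$ of $\mathbf{P}_{f}$. I then write down a finite $L$-structure $S \supseteq S_{0}$ by adjoining a \emph{seed} for $z$ and specifying the $b_{i}$-relations between the seed and $X$. The seed depends on the target type: a single point $z$ (with $\mathbf{b}^{f}(z,z)$ all zero except at index $0$) for parity $0$ with infinite spiral length; a finite cycle $\{z, f(z), \dots, f^{n-1}(z)\}$ with $f^{n}(z) = z$ for parity $0$ with spiral length $n$; and a tightening-spirals gadget in the sense of \ref{VagueDStatement} together with a seed point for parity $\pm 1$. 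The crucial role of the gadget is that, once $S$ is embedded into $\mathbf{P}_{f}$, property (D) forces $\mathbf{b}^{f}(x,z)$ and $\mathbf{b}^{f}(z,x)$ to be eventually periodic with period already determined inside $S$; thus the a priori infinitary strong-order conditions collapse to finitely many $b_{i}$-relations already prescribed in $S$, and property (C) handles $\mathbf{b}^{f}(z,z)$ in the non-zero parity case.

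To realize $S$ inside $\mathbf{P}_{f}$ over $X$, I invoke ultrahomogeneity (B), which reduces the task to embedding $S$ abstractly into $\mathbf{P}_{f}$. That embedding exists because the density of the conjugacy class of $f$ (A) gives, via a Kechris--Rosendal-style argument, universality of $\mathbf{P}_{f}$ for the class of finite $L$-structures coming from finite partial automorphisms of $\mathbf{P}$. The main obstacle is the bookkeeping in specifying $S$: the prescribed $b_{i}$-relations must be internally consistent with the poset axioms, with injectivity of $f$, and with the periodicity forced by the seed, while simultaneously encoding strong comparability with $\mathcal{L} \cup \mathcal{U}$ and strong incomparability with $\mathcal{I}$. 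The most delicate point is distinguishing strong from weak order---ensuring that no single iterate accidentally creates a weak comparability that would violate strong incomparability with some element of $\mathcal{I}$. Once $S$ is written down correctly, its realizability inside $\mathbf{P}_{f}$ and the extraction of $\mathcal{Z}$ as the orbital of the embedded $z$ are routine.
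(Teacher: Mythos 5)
Your overall skeleton --- a back-and-forth/one-point-extension argument on $\paren{\OQ{f}{\mathbf{P}}, <_{f}^{s}}$, with ``seeds'' (an $n$-cycle for parity $0$ and spiral length $n$, an infinite-spiral-length point, spiral-type points for parity $\pm 1$) realized between the prescribed predecessors and successors --- is the same as the paper's, which proves exactly this via an ``orbital sandwich lemma'' and then gets density by planting the configurations $Q_{n}$ and the M-configuration $Q_{\infty}$ between any two $<_{f}^{s}$-comparable orbitals. But there is a genuine gap at the step you dismiss as routine bookkeeping: writing down a finite $L$-structure $S$ in which you \emph{declare} $\mathbf{b} \paren{x,z} = \mathbf{1}$, $\mathbf{b} \paren{z,y} = \mathbf{1}$, etc., does not by itself put $S$ within reach of the universality lemma. \Cref{DenseClassImpliesUniversality} only embeds structures of the form $\paren{\dur{p}, \mathbf{b}^{g}}$ for an actual $p \in \mathcal{P}$ and $g \in \brack{p} \cap D \paren{p}$, and to apply ultrahomogeneity over $X$ you additionally need the copy of $S_{0}$ inside $S$ to carry the \emph{exact} bi-infinite $\mathbf{b}^{f}$-data of the chosen representatives. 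So the real content is exhibiting a finite configuration in $\mathbf{P}$, extendable to a determined partial automorphism, which \emph{forces} the prescribed cross-sequences $\mathbf{b} \paren{x,z}$, $\mathbf{b} \paren{z,y}$ for every extension. Your proposal has gadgets only for $\mathbf{b} \paren{z,z}$ (cycles, M's, spirals), not for these cross-relations, and neither property \ref{VagueDStatement} (which concerns points of \emph{infinite} spiral length, not parity $\pm 1$) nor (C) supplies them.

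The paper's missing ingredient here is the fixed-point device: in the sandwich lemma one inserts fixed points $a_{x}, b_{y}$ with $x < a_{x} < w < b_{y} < y$ for all $x \in A$, $y \in B$, $w$ in the planted configuration; by \Cref{FixedPointBetweenForcesStronglyLess} these force the full strong-order relations $\orb{f}{x} <_{f}^{s} \orb{f}{w} <_{f}^{s} \orb{f}{y}$ from a purely finite configuration, while determinacy of the extension together with \Cref{OrbitalOrderCapturedByDeterm} handles strong incomparability with $C$ (note that weak comparability with $C$ is not in itself a problem, contrary to your stated worry --- $\perp^{s}$ does not require $\perp^{w}$). The proof then runs through the generic machinery $D \paren{p}$, $E \paren{p,p'}$ and determined maps rather than through properties (A)--(D) of $\Sigma$; if you complete your outline honestly (choose a determined $p \subseteq f$ containing $X$, extend by the seed plus fixed-point separators, take a determined extension and use $E \paren{p,p'}$, or equivalently re-prove realizability of $S$ for the universality lemma), you will find yourself reconstructing precisely this sandwich lemma. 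As written, the proposal asserts the conclusion of that lemma without an argument.
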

\begin{qn}
    Does the converse hold? In other words, does this condition on the orbital quotient imply genericity?
\end{qn}
For details, we refer to \cref{subsec:StructureOfTheOrbitalQuotient}; this theorem is stated more precisely as \Cref{GenericOrbitalQuotientIsRandom}.

We conclude with remarks on other model-theoretic properties of $\mathbf{P}_{f}$. As mentioned above, the $\mathbf{P}_{f}$ is ultrahomogeneous for the generic $f \in \Aut{\mathbf{P}}$. However, it fails to exhibit other nice model-theoretic properties, namely saturation, quantifier-elimination, and $\omega$-categoricity. This contrasts with $\mathbf{P}$ itself, which does enjoy these properties (as do many other \Fraisse{} structures in finite languages). Each of these negative results are essentially due to \Cref{AllSpiralLengthsSketch}. For details, we refer to \cref{subsec:ModelTheoreticConsiderations}.

\subsubsection*{Acknowledgements}
We extend gratitude to:
\begin{itemize}
    \item Aleksandra Kwiatkowska, for originally communicating the motivating problem to the author, and for providing useful references;
    \item The author's advisor Anush Tserunyan, for invaluable guidance and critique;
    \item Ronnie Chen, for helpful comments and suggestions for further research;
    \item Elliot Kaplan and Alexi Block Gorman, for helpful discussion of potential model-theoretic properties of the $\mathbf{P}_{f}$'s.
\end{itemize}

\section{General posets}\label{sec:GeneralPosets}
Throughout this section, $\paren{P,\leq}$ will denote an arbitrary poset.

\subsection{Preliminary notions}
We let $\Aut{P}$ denote the automorphism group of $P$. We say $x,y \in P$ are \textbf{comparable} if either $x \leq y$ or $y \leq x$. We write $x \parallel y$ if $x$ and $y$ are comparable, and we write $x \perp y$ if $x$ and $y$ are not comparable.

For a partial map $p \from P \rightharpoonup P$, we write $\dom{p}$ and $\ran{p}$ for the domain and range respectively, and we let $\dur{p}$ abbreviate $\dom{p} \cup \ran{p}$. (``dur'' is short for ``domain union range''.)

We say a partial map $p \from P \rightharpoonup P$ is a \textbf{partial automorphism} of $P$ if $x \leq y \iff p \paren{x} \leq p \paren{y}$ for all $x,y \in \dom{p}$. (By antisymmetry, all such maps are necessarily injective.) We then let $\mathcal{P}$ denote the set of those partial automorphisms $p$ with finite domain. Given $p \in \mathcal{P}$, we let $\brack{p} := \brace{f \in \Aut{P} : p \subseteq f}$. (A priori, $\brack{p}$ may be empty.)

For a set $A \subseteq P$ and a map $f$ on $P$, we denote $f^{\Z} \brack{A} := \bigcup_{n} f^{n} \brack{A}$. Furthermore if $x \in P$, we write $f^{\Z} \paren{x} := f^{\Z} \brack{\brace{x}}$, and call this set the \textbf{$f$-orbit} of $x$.

The following will be useful to state explicitly, especially since it's true for infinite posets too:

\begin{lem}\label{StrongAmalgamationProperty}
    Posets have the following strong form of the amalgamation property: for every poset $A$, and all posets $P_{1}$ and $P_{2}$ for which there exist order embeddings $f_{i} \from A \hookrightarrow P_{i}$ (i=1,2), there exists a poset $Q$ and order embeddings $g_{i} \from P_{i} \hookrightarrow Q$ (i=1,2) such that:
    \begin{enumerate}[label=(\alph*)]
        \item $g_{1} \circ f_{1} = g_{2} \circ f_{2}$;
        \item $Q = \ran{g_{1}} \cup \ran{g_{2}}$;
        \item $\ran{g_{1} \circ f_{1}} = \ran{g_{2} \circ f_{2}} = \ran{g_{1}} \cap \ran{g_{2}}$;
        \item $g_{1} \paren{x} \leq g_{2} \paren{y}$ if, and only if, there exists $a \in A$ such that $x \leq f_{1} \paren{a}$ and $f_{2} \paren{a} \leq y$, and similarly for $g_{2} \paren{y} \leq g_{1} \paren{x}$.
    \end{enumerate}
\end{lem}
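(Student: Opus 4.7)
The plan is to construct $Q$ as the set-theoretic pushout of the span $P_1 \xleftarrow{f_1} A \xrightarrow{f_2} P_2$. Since $f_1$ and $f_2$ are injective, I can realize this concretely by taking $Q := P_1 \sqcup \paren{P_2 \setminus f_2 \brack{A}}$, defining $g_1 \from P_1 \hookrightarrow Q$ as the inclusion, and defining $g_2 \from P_2 \hookrightarrow Q$ to send $f_2 \paren{a} \mapsto f_1 \paren{a}$ for $a \in A$ and to act as the inclusion elsewhere. This immediately yields (a), (b), and (c); only the order structure on $Q$ remains to be specified and verified.

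I will declare $\leq_Q$ on $Q$ by two clauses: within each $g_i \brack{P_i}$, put $g_i \paren{x} \leq_Q g_i \paren{x'}$ iff $x \leq_{P_i} x'$; and for the mixed case, put $g_1 \paren{x} \leq_Q g_2 \paren{y}$ iff there exists $a \in A$ with $x \leq f_1 \paren{a}$ and $f_2 \paren{a} \leq y$ (and symmetrically for $g_2 \paren{y} \leq_Q g_1 \paren{x}$). Clause (d) then holds by fiat, and each $g_i$ is order-preserving by the within-side clause. Well-definedness on identified points is immediate: for $q = g_1 \paren{f_1 \paren{a}} = g_2 \paren{f_2 \paren{a}}$ and any $x' \in P_1$, using $b = a$ as a witness shows the mixed clause reproduces the within-side clause, and similarly on the other side.

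The substantive task is verifying that $\leq_Q$ is a partial order and that the $g_i$ reflect order. Reflexivity is immediate. For antisymmetry, suppose $g_1 \paren{x} \leq_Q g_2 \paren{y}$ and $g_2 \paren{y} \leq_Q g_1 \paren{x}$; witnesses $a, b \in A$ yield $f_2 \paren{a} \leq y \leq f_2 \paren{b}$ and $f_1 \paren{b} \leq x \leq f_1 \paren{a}$, and since the $f_i$ are order embeddings both chains force $a = b$, whence $x = f_1 \paren{a}$ and $y = f_2 \paren{a}$, so the two points coincide in $Q$. For transitivity, each configuration reduces either to a one-sided chain (trivial) or to a crossing chain such as $g_1 \paren{x} \leq_Q g_2 \paren{y} \leq_Q g_1 \paren{x'}$; in the latter, witnesses $a, b \in A$ give $f_2 \paren{a} \leq y \leq f_2 \paren{b}$, hence $a \leq_A b$, hence $f_1 \paren{a} \leq f_1 \paren{b}$, and then $x \leq f_1 \paren{a} \leq f_1 \paren{b} \leq x'$ yields $g_1 \paren{x} \leq_Q g_1 \paren{x'}$ via the within-side clause. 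The mirror case $g_2 \paren{y} \leq_Q g_1 \paren{x} \leq_Q g_2 \paren{y'}$ is symmetric, and the cases where the chain crosses only once (e.g., $g_1 \paren{x} \leq_Q g_1 \paren{x'} \leq_Q g_2 \paren{y}$) are one-line unpackings of the definitions.

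I expect the main obstacle to be the combined bookkeeping of the transitivity cases and the order-reflection check for the $g_i$: one must confirm that the mixed clause does not covertly produce a new comparability inside $g_1 \brack{P_1}$ or $g_2 \brack{P_2}$, and this is exactly what the crossing-transitivity case rules out. Once $\leq_Q$ has been verified as a partial order, conditions (a)--(d) and the embedding property of $g_1, g_2$ all follow directly from the construction.
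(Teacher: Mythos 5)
Your construction is the same as the paper's: the pushout $Q = P_{1} \sqcup \paren{P_{2} \setminus f_{2}\brack{A}}$ with the mixed comparabilities generated through witnesses in $A$, the only cosmetic difference being that you use non-strict witnesses (matching (d) directly) where the paper's definition uses strict ones together with the identification of $\ran{f_{2}}$ with $\ran{f_{1}}$. The verification you spell out (agreement of the clauses on identified points, antisymmetry and crossing transitivity) is exactly what the paper leaves as ``one checks,'' so the proposal is correct and essentially identical in approach.
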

\begin{proof}
    Define $Q$ on the disjoint union $P_{1} \sqcup \paren{P_{2} \setminus \ran{f_{2}}}$, letting $g_{1} := \id_{P_{1}}$ and $$g_{2} \paren{y} := \begin{cases}
    y & \text{if $y \notin \ran{f_{2}}$;}\\
    f_{1} \paren{f_{2}^{-1} \paren{y}} & \text{if $y \in \ran{f_{2}}$.}
    \end{cases}$$
    This gives (a), (b), and (c). Define the order on $Q$ by letting: $$x <_{Q} x' \defiff \begin{cases}
    x <_{P_{1}} x' & \text{if $x,x' \in P_{1}$;}\\
    x <_{P_{2}} x' & \text{if $x,x' \in P_{2}$;}\\
    \exists a \in A \paren{x <_{P_{1}} f_{1} \paren{a} \wedge f_{2} \paren{a} <_{P_{2}} x'} & \text{if $x \in P_{1}, x' \in P_{2}$;}\\
    \exists a \in A \paren{x <_{P_{2}} f_{2} \paren{a} \wedge f_{1} \paren{a} <_{P_{1}} x'} & \text{if $x' \in P_{1}, x \in P_{2}$.}
    \end{cases}$$
    Then one checks that $<_{Q}$ is a partial order on $Q$ making $g_{1}$ and $g_{2}$ order embeddings, and satisfying (d).
\end{proof}

We will often use back-and-forth methods for constructing isomorphisms. We will often use the following observation, which can be seen as a consequence of \Cref{StrongAmalgamationProperty}: 
    
\begin{obs}[Sandwich principle]\label{SandwichPrinciple}
    To add a new point $z$ to a finite poset $P$, it is necessary and sufficient to specify the (possibly empty) set $A$ of desired immediate predecessors of the new point, and the (possibly empty) set $B$ of desired immediate successors of the new point, subject only to the constraint that $x < y$ for all $x \in A$ and $y \in B$.
\end{obs}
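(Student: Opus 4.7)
The plan is to split the claim into necessity and sufficiency. Necessity is immediate from transitivity: in any poset extending $P$ by a new point $z$ whose sets of immediate predecessors and successors are $A$ and $B$ respectively, we have $a < z < b$ for every $a \in A, b \in B$, hence $a < b$ in $P$.

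For sufficiency, my plan is to invoke \Cref{StrongAmalgamationProperty} in an economical way. Given $A, B \subseteq P$ with $a < b$ for every $a \in A, b \in B$, I would first build an auxiliary finite poset $R$ on the underlying set $A \cup B \cup \brace{z}$: keep the order inherited from $P$ on $A \cup B$, and add the relations $a < z < b$ for every $a \in A, b \in B$. The hypothesis on $A, B$ is exactly what is needed for $R$ to be a valid poset --- the only new instances of transitivity through $z$ are of the form $a < z < b$, and their transitive closure $a < b$ is already present by assumption.

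Now I apply \Cref{StrongAmalgamationProperty} with common base $A \cup B$ (equipped with its order from $P$), along the inclusions $A \cup B \hookrightarrow P$ and $A \cup B \hookrightarrow R$. The resulting amalgam $Q$ is, up to identification, a poset on $P \cup \brace{z}$ whose restriction to $P$ agrees with the original order. Clause (d) of the lemma tells us precisely which comparabilities involving $z$ hold in $Q$: for $p \in P$, we have $p < z$ in $Q$ iff $p \leq a$ in $P$ for some $a \in A$, and $z < p$ iff $b \leq p$ for some $b \in B$. Since $A$ is automatically an antichain (as a set of immediate predecessors must be), the maximal elements of the down-set of $A$ in $Q$ are exactly $A$, so $A$ is indeed the set of immediate predecessors of $z$; the argument for $B$ is symmetric.

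The main obstacle is essentially bookkeeping: verifying that $R$ is a poset, and then extracting the description of $z$'s immediate neighbours from clause (d). Both reduce to noting that one cannot have $b \leq p \leq a$ for any $p \in P$, since this would contradict the hypothesis $a < b$. One could bypass the lemma and define the extended order on $P \cup \brace{z}$ by hand, verifying irreflexivity, antisymmetry, and transitivity case by case, but using \Cref{StrongAmalgamationProperty} keeps the verification essentially automatic and matches the remark preceding the observation.
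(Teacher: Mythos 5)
Your proof is correct and takes exactly the route the paper intends: the paper states this as an unproved observation, remarking only that it ``can be seen as a consequence of \Cref{StrongAmalgamationProperty}'', and your construction --- amalgamating the one-point poset $R$ on $A \cup B \cup \brace{z}$ with $P$ over $A \cup B$ and reading off $z$'s neighbours from clause (d) --- is precisely that derivation, with the key checks (that $R$ is a poset and that no $p$ satisfies $b \leq p \leq a$) correctly identified.
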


We are unaware of an already-existing name for this notion. We have chosen the name to be illustrative: the immediate predecessors and successors form the bread\footnote{In case either of the sets of predecessors/successors is empty, we are allowing open-faced sandwiches.}, and the new point in the middle is the filling. We will slightly expand on this analogy later.

\subsection{Orbitals}\label{subsec:Orbitals}

\begin{defn}
    Let $f \in \Aut{P}$. We define the binary relation $\sim_{f}$ on $P$ by: $$x \sim_{f} y \defiff \textnormal{$\exists i,j \in \Z$ such that $f^{i} \paren{x} \leq y \leq f^{j} \paren{x}$}.$$
\end{defn}

\begin{prop}\label{OrbitalsPartitionPoset}
    For $f \in \Aut{P}$, $\sim_{f}$ is an equivalence relation on $P$.
\end{prop}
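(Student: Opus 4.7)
The plan is to verify the three defining properties of an equivalence relation in turn, leveraging the single crucial fact that every $f^{n}$ (for $n \in \Z$) is an order automorphism of $P$, hence preserves $\leq$ when applied to both sides of an inequality.

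Reflexivity is immediate: taking $i = j = 0$ gives $f^{0}(x) = x \leq x \leq x = f^{0}(x)$, so $x \sim_{f} x$. For symmetry, suppose $x \sim_{f} y$, witnessed by $i, j \in \Z$ with $f^{i}(x) \leq y \leq f^{j}(x)$. Applying $f^{-i}$ to the left inequality and $f^{-j}$ to the right yields $x \leq f^{-i}(y)$ and $f^{-j}(y) \leq x$, so $y \sim_{f} x$ is witnessed by the pair $(-j, -i)$.

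For transitivity, suppose $x \sim_{f} y$ and $y \sim_{f} z$, with witnesses $i, j$ and $k, \ell$ respectively, so that
\[
    f^{i}(x) \leq y \leq f^{j}(x) \qquad \text{and} \qquad f^{k}(y) \leq z \leq f^{\ell}(y).
\]
Applying $f^{k}$ to the first pair gives $f^{k+i}(x) \leq f^{k}(y)$, which chains with $f^{k}(y) \leq z$ to yield $f^{k+i}(x) \leq z$. Symmetrically, applying $f^{\ell}$ to $y \leq f^{j}(x)$ gives $f^{\ell}(y) \leq f^{\ell+j}(x)$, which chains with $z \leq f^{\ell}(y)$ to yield $z \leq f^{\ell+j}(x)$. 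Thus $x \sim_{f} z$ is witnessed by $(k+i, \ell+j)$.

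There is no real obstacle here; the only subtle point is remembering that order automorphisms preserve inequalities in both directions (i.e., $a \leq b \iff f^{n}(a) \leq f^{n}(b)$ for every $n \in \Z$), which is what allows the back-translation step in symmetry and the combining step in transitivity. The proof uses nothing about the poset beyond reflexivity and transitivity of $\leq$, so the statement indeed holds for arbitrary $P$.
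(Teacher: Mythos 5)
Your proof is correct and follows essentially the same argument as the paper: reflexivity via $i=j=0$, symmetry by applying $f^{-i}$ and $f^{-j}$ to obtain the witnesses $(-j,-i)$, and transitivity by composing the witnessing inequalities to get witnesses $(i+k, j+\ell)$. No gaps; nothing further is needed.
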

\begin{proof}
    Taking $i = j = 0$, clearly $x \sim_{f} x$.

    Suppose $f^{i} \paren{x} \leq y \leq f^{j} \paren{x}$. Then $f^{-j} \paren{y} \leq x \leq f^{-i} \paren{y}$, and so $x \sim_{f} y$ implies $y \sim_{f} x$. We will remark here that the same proof implies $x \sim_{f} y \iff x \sim_{f^{-1}} y$.

    Suppose $f^{i_{1}} \paren{x} \leq y \leq f^{j_{1}} \paren{x}$ and $f^{i_{2}} \paren{y} \leq z \leq f^{j_{2}} \paren{y}$. Then $f^{i_{1} + i_{2}} \paren{x} = f^{i_{2}} \paren{f^{i_{1}} \paren{x}} \leq f^{i_{2}} \paren{y} \leq z \leq f^{j_{2}} \paren{y} \leq f^{j_{2}} \paren{f^{j_{1}} \paren{x}} = f^{j_{1} + j_{2}} \paren{x}$. Hence $x \sim_{f} y$ and $y \sim_{f} z$ imply $x \sim_{f} z$.
\end{proof}

\begin{defn}
    For $f \in \Aut{P}$, we let $\mathcal{O}_{f} \from P \to P / \sim_{f}$ denote the associated quotient map for $\sim_{f}$. We abbreviate the quotient $P / \sim_{f}$ as simply $\OQ{f}{P}$, called the \textbf{orbital quotient}. For $x \in P$, we call the equivalence class $\orb{f}{x}$ the \textbf{orbital} of $x$ under $f$.
\end{defn}

Observe that orbitals are orbit-invariant --- that is, $f^{\Z} \paren{x} \subseteq \orb{f}{x}$ for any $x \in P$. Orbitals are also \textbf{convex} --- that is, for all $y,z \in \orb{f}{x}$ and all $w \in P$, if $y < w < z$ then $w \in \orb{f}{x}$. Both of these facts are immediate from the definitions. Moreover, $\orb{f}{x}$ is the smallest convex subset of $P$ containing $f^{\Z} \paren{x}$; so, we may think of $\orb{f}{x}$ as the ``convex hull'' of $f^{\Z} \paren{x}$, where convexity is order-theoretic rather than geometric.

\begin{lem}\label{InterlacingOrbitalsEqual}
    Let $f \in \Aut{P}$ and $x,y \in P$. Then $x \sim_{f} y$ if and only if there exist $x', x'' \in \orb{f}{x}$ and $y', y'' \in \orb{f}{y}$ such that $x' \leq y'$ and $y'' \leq x''$.
\end{lem}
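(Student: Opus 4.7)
The plan is to unfold the definition of $\sim_f$ in both directions. The forward direction is essentially immediate, and the backward direction requires only a short "interlacing" argument: from each of the four interlacing elements we extract a comparison to a power of $f$ applied to $x$ or $y$, and then we chain those comparisons through $x' \leq y'$ and $y'' \leq x''$.

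For the forward direction, suppose $x \sim_f y$, so by definition there exist $i,j \in \Z$ with $f^i(x) \leq y \leq f^j(x)$. I would simply take $x' := f^i(x)$, $x'' := f^j(x)$, which lie in $\orb{f}{x} \supseteq f^{\Z}(x)$, and $y' := y'' := y \in \orb{f}{y}$. Then $x' \leq y'$ and $y'' \leq x''$, as required.

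For the backward direction, suppose $x', x'' \in \orb{f}{x}$ and $y', y'' \in \orb{f}{y}$ satisfy $x' \leq y'$ and $y'' \leq x''$. Unpacking $x' \sim_f x$ and $y' \sim_f y$ yields integers with $f^{i_1}(x) \leq x'$ and $y' \leq f^{l_1}(y)$, so
\[
f^{i_1}(x) \leq x' \leq y' \leq f^{l_1}(y),
\]
which, after applying $f^{-l_1}$, gives $f^{i_1 - l_1}(x) \leq y$. Similarly, unpacking $x'' \sim_f x$ and $y'' \sim_f y$ produces integers with $f^{k_2}(y) \leq y''$ and $x'' \leq f^{j_2}(x)$, so
\[
f^{k_2}(y) \leq y'' \leq x'' \leq f^{j_2}(x),
\]
giving $y \leq f^{j_2 - k_2}(x)$. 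Setting $i := i_1 - l_1$ and $j := j_2 - k_2$ witnesses $x \sim_f y$.

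There is no real obstacle here — the lemma is a straightforward consequence of the definition of $\sim_f$ together with the fact that membership $z \in \orb{f}{w}$ means $z$ is sandwiched between two powers of $f$ applied to $w$. The only thing to be slightly careful about is the direction of the chained inequalities, so that the indices line up to yield a genuine witness of $x \sim_f y$ rather than merely $x \sim_f x$ or $y \sim_f y$; but this is handled automatically by using $x' \leq y'$ for the lower bound on $y$ and $y'' \leq x''$ for the upper bound.
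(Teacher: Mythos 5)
Your proof is correct and follows essentially the same route as the paper: unwind the definition of $\sim_{f}$ and chain the inequalities through $x' \leq y'$ and $y'' \leq x''$, using that powers of $f$ preserve the order. The only cosmetic difference is that you unpack each primed point against the base points $x$ and $y$ and produce an explicit pair $(i,j)$ with $f^{i}(x) \leq y \leq f^{j}(x)$, whereas the paper relates the primed points to each other and then concludes via transitivity of $\sim_{f}$; both steps are immediate.
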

\begin{proof}
    The forward implication is immediate: if $f^{i} \paren{x} \leq y \leq f^{j} \paren{x}$, then taking $x' := f^{i} \paren{x}$, $x'' := f^{j} \paren{x}$, and $y' = y'' := y$ works.
    
    For the converse, take $f^{i_{1}} \paren{x'} \leq x'' \leq f^{j_{1}} \paren{x'}$ and $f^{i_{2}} \paren{y'} \leq y'' \leq f^{j_{2}} \paren{y'}$. Then $f^{i_{1}} \paren{x'} \leq f^{i_{1}} \paren{y'} = f^{i_{1} - i_{2}} \paren{f^{i_{2}} \paren{y'}} \leq f^{i_{1} - i_{2}} \paren{y''} \leq f^{i_{1} - i_{2}} \paren{x''} \leq f^{i_{1} - i_{2}} \paren{f^{j_{1}} \paren{x'}} = f^{i_{1} + j_{1} - i_{2}} \paren{x'}$, so by orbit-invariance, $y'' \sim_{f} f^{i_{1} - i_{2}} \paren{y''} \sim_{f} x'$. Hence, $x \sim_{f} x' \sim_{f} y'' \sim_{f} y$.
\end{proof}

\begin{defn}
    Given $f \in \Aut{P}$ and $x \in P$, we let the \textbf{spiral length} be given by $\sp{x,f} := \min \paren{\brace{\infty} \cup \brace{n \geq 1 : x \parallel f^{n} \paren{x}}}$. Define the \textbf{parity} of $f$ at $x$ by:
    $$\parity{x,f} := \begin{cases}
    +1 & \text{if $\sp{x,f} < \infty$ and $x < f^{\sp{x,f}} \paren{x}$;}\\
    -1 & \text{if $\sp{x,f} < \infty$ and $x > f^{\sp{x,f}} \paren{x}$;}\\
    0 & \text{otherwise.}
    \end{cases}$$
\end{defn}

\begin{lem}\label{SpiralLengthOrbitInvariant}
    Spiral length and parity are orbit-invariant; that is, $\sp{f^{k} \paren{x},f} = \sp{x,f}$ and $\parity{f^{k} \paren{x},f} = \parity{x,f}$ for any $k \in \Z$.
\end{lem}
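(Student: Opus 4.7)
The plan is to exploit the single observation that $f$ is itself an order automorphism of $P$, so $f^{k}$ preserves the comparability relation in both directions, and it commutes with all other powers of $f$.

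For spiral length, fix $k \in \Z$ and set $y := f^{k}(x)$. For each integer $n \geq 1$, I would rewrite $f^{n}(y) = f^{n+k}(x) = f^{k}(f^{n}(x))$ and observe that, since $f^{k} \in \Aut{P}$,
\[
y \parallel f^{n}(y) \iff f^{k}(x) \parallel f^{k}(f^{n}(x)) \iff x \parallel f^{n}(x).
\]
Thus the sets $\{n \geq 1 : y \parallel f^{n}(y)\}$ and $\{n \geq 1 : x \parallel f^{n}(x)\}$ coincide, and taking the minimum (with the $\infty$ convention) yields $\sp{f^{k}(x),f} = \sp{x,f}$.

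For parity, the spiral length case already handles $\sp{x,f} = \infty$: both points then have parity $0$. Otherwise, let $n := \sp{x,f} = \sp{f^{k}(x),f}$; since $f^{k}$ is order-preserving, the trichotomy between $x$ and $f^{n}(x)$ is preserved when $f^{k}$ is applied to both sides, so $f^{k}(x) < f^{n}(f^{k}(x))$, $=$, or $>$ according to whether $x < f^{n}(x)$, $=$, or $>$. Comparing with the definition of parity gives $\parity{f^{k}(x),f} = \parity{x,f}$.

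I expect no genuine obstacle here: the lemma is essentially a bookkeeping consequence of the commutation $f^{n} \circ f^{k} = f^{k} \circ f^{n}$ together with the order-preserving nature of $f^{k}$. The only mild care needed is to handle both positive and negative $k$ uniformly, which is automatic since $f^{-1} \in \Aut{P}$ as well.
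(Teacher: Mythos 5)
Your proof is correct and is essentially the paper's own argument: both rest on the single observation that $f^{k}$ is an order automorphism commuting with $f^{n}$, so $x \parallel f^{n}(x)$ (and the trichotomy $<$, $=$, $>$) transfers to $f^{k}(x)$ and $f^{n}(f^{k}(x))$, making the two spiral lengths minima of the same set and the parities equal. No gaps; your write-up is just a more explicit version of the paper's one-line proof.
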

\begin{proof}
    If $n \in \Z$, then since $f$ is an automorphism, $x < f^{n} \paren{x}$ if and only if $f^{k} \paren{x} < f^{k} \paren{f^{n} \paren{x}} = f^{n+k} \paren{x} = f^{n} \paren{f^{k} \paren{x}}$. Therefore $\sp{x,f} = \sp{f^{k} \paren{x},f}$ (since they're both the minimum of the same set of numbers), and moreover $\parity{f^{k} \paren{x},f} = \parity{x,f}$.
\end{proof}

\begin{lem}\label{ParityOrbitalInvariant}
    Parity is orbital-invariant; that is, $x \sim_{f} y$ implies $\parity{x,f} = \parity{y,f}$. Moreover, if this common parity is $0$, then $\sp{x,f} = \sp{y,f}$.
\end{lem}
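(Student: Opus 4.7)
The plan is to split into cases based on the parity of $x$, handling parity $0$ first. The key observation is that if $\parity{x,f} = 0$, then $\orb{f}{x} = f^{\Z} \paren{x}$. Indeed, given $y \in \orb{f}{x}$ with $f^i \paren{x} \leq y \leq f^j \paren{x}$, transitivity yields $f^i \paren{x} \leq f^j \paren{x}$, so these two orbit points are comparable; when $\sp{x,f} = \infty$ no two distinct orbit points of $x$ are comparable, and when $\sp{x,f} = m < \infty$ parity $0$ forces $f^m \paren{x} = x$, so that $f^i \paren{x}$ and $f^j \paren{x}$ are comparable only when equal. Either way $y$ is sandwiched between equal orbit points and lies in $f^{\Z} \paren{x}$. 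Consequently, if either of $x, y$ has parity $0$, applying this observation symmetrically (together with the identity $\sim_f = \sim_{f^{-1}}$ noted in the proof of \Cref{OrbitalsPartitionPoset}) puts $x$ and $y$ in a single orbit, and \Cref{SpiralLengthOrbitInvariant} delivers both assertions.

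Now suppose neither of $x, y$ has parity $0$; by symmetry it suffices to take $\parity{x,f} = +1$ and conclude $\parity{y,f} = +1$. Setting $n := \sp{x,f}$, orbit-invariance gives a strictly increasing chain $\ldots < f^{-n} \paren{z} < z < f^n \paren{z} < \ldots$ for every $z \in f^{\Z} \paren{x}$, and two orbit points $f^a \paren{x}, f^b \paren{x}$ are comparable precisely when $n \mid (a - b)$. From $f^i \paren{x} \leq y \leq f^j \paren{x}$ we may therefore write $j = i + ln$ with $l \geq 0$, and for any $N > l$, applying $f^{Nn}$ to $f^i \paren{x} \leq y$ gives $f^{Nn} \paren{y} \geq f^{i + Nn} \paren{x} > f^{i + ln} \paren{x} = f^j \paren{x} \geq y$, whence $y < f^{Nn} \paren{y}$.

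The proof then reduces to the sub-claim that \emph{$y < f^K \paren{y}$ for some $K \geq 1$ forces $\parity{y,f} = +1$}. I would prove this by ruling out the other three possibilities for $m := \sp{y,f}$ and $\parity{y,f}$. The case $m = \infty$ contradicts $y \parallel f^K \paren{y}$ directly. If $\parity{y,f} = 0$ with $m < \infty$, then $f^m \paren{y} = y$, so $f^K \paren{y} = f^{K \bmod m} \paren{y}$; either the residue is zero (giving $f^K \paren{y} = y$, absurd) or lies in $\brace{1, \ldots, m-1}$ (contradicting $y \perp f^{K \bmod m} \paren{y}$, forced by $\sp{y,f} = m$). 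If $\parity{y,f} = -1$, write $K = qm + r$ with $0 \leq r < m$ (the case $q = r = 0$ being excluded by $K \geq 1$) and set $w := f^{qm} \paren{y}$, so that $w \leq y$ and $f^K \paren{y} = f^r \paren{w}$; if $r = 0$, then $q \geq 1$ gives $f^K \paren{y} = w < y$, contradicting the hypothesis, and if $0 < r < m$, then $w \perp f^r \paren{w} = f^K \paren{y}$ by the spiral length of $w$, while $w \leq y < f^K \paren{y}$ forces $w < f^K \paren{y}$, also a contradiction.

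The main obstacle, I expect, is this last parity $-1$ branch: one cannot directly compare $y$ with $f^K \paren{y}$ using the spiral length of $y$ alone, and must instead interpolate through an intermediate orbit point $w = f^{qm} \paren{y}$ to exploit the incomparability $w \perp f^r \paren{w}$ imposed by the spiral length of $w$.
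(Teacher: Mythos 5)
Your parity-$0$ case and your concluding sub-claim (that $y < f^{K} \paren{y}$ for some $K \geq 1$ forces $\parity{y,f} = +1$, proved by routing the parity-$-1$ branch through the intermediate orbit point $w = f^{qm} \paren{y}$) are correct, and the overall route is essentially a more carefully justified version of the paper's proof, which gets $x \leq f^{j-i} \paren{x}$ and $y \leq f^{j-i} \paren{y}$ in one line and reads off the parity, leaving your sub-claim implicit. However, one intermediate step in your nonzero-parity case is wrong as stated: it is \emph{not} true that two orbit points $f^{a} \paren{x}$, $f^{b} \paren{x}$ are comparable precisely when $n \mid \paren{a-b}$, where $n = \sp{x,f}$. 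Only the ``if'' direction holds; minimality of the spiral length rules out comparability when $0 < \absval{a-b} < n$, but says nothing about larger differences. The paper's own example (the remark accompanying \Cref{fig:FourRailsWithHops}) has $\sp{x,f} = 4$, parity $+1$, and $x < f^{5} \paren{x}$; there the witnesses $i = 0$, $j = 5$ give comparable points $f^{i} \paren{x} \leq f^{j} \paren{x}$ with $4 \nmid \paren{j-i}$. So your assertion that one ``may therefore write $j = i + ln$ with $l \geq 0$'' is unjustified and can fail.

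Fortunately the rest of your argument only needs \emph{some} $K \geq 1$ with $y < f^{K} \paren{y}$, and this is available without any divisibility. From $f^{i} \paren{x} \leq y$, applying $f^{j-i+kn}$ gives $f^{j+kn} \paren{x} \leq f^{j-i+kn} \paren{y}$, while orbit-invariance of spiral length and parity (\Cref{SpiralLengthOrbitInvariant}) gives $f^{j} \paren{x} < f^{j+kn} \paren{x}$ for every $k \geq 1$; hence $y \leq f^{j} \paren{x} < f^{j+kn} \paren{x} \leq f^{j-i+kn} \paren{y}$, and choosing $k$ large enough that $j - i + kn \geq 1$ puts you exactly in the hypothesis of your sub-claim (this also spares you from having to argue separately that $j - i \geq 0$). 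With that one replacement the proof is complete; the only other quibble is that in the parity-$0$ case you invoke $\sim_{f} = \sim_{f^{-1}}$ where plain symmetry of $\sim_{f}$, already established in \Cref{OrbitalsPartitionPoset}, is all that is needed.
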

\begin{proof}
    Suppose $f^{i} \paren{x} \leq y \leq f^{j} \paren{x}$. Then $x \leq f^{j-i} \paren{x}$, and $y \leq f^{j} \paren{x} = f^{j-i} \paren{f^{i} \paren{x}} \leq f^{j-i} \paren{y}$. Thus $\parity{x,f} = \parity{y,f} = \operatorname{sgn} \paren{j-i}$.
    
    If $\parity{x,f} = 0$, then $f^{i} \paren{x} = f^{j} \paren{x} = y$, and so $\orb{f}{x} = f^{\Z} \paren{x}$. Thus $\sp{x,f} = \sp{y,f}$ and $\parity{x,f} = \parity{y,f}$ by \Cref{SpiralLengthOrbitInvariant}.
\end{proof}

\begin{rmk}
    As we will see later, spiral length need not be orbital-invariant for orbitals of non-zero parity.
\end{rmk}

\subsection{\texorpdfstring{The language $L$}{The language L}}\label{subsec:TheLanguageL}

We begin with some motivating observations. For automorphisms $f,g \in \Aut{P}$, let $P'$ and $P''$ be the structures obtained by adding a unary function symbol whose interpretation in $P'$ is $f$ and whose interpretation in $P''$ is $g$. Then $h \from P \to P$ is an isomorphism from $P'$ to $P''$ if and only if $h \in \Aut{P}$ such that $g = hfh^{-1}$. Thus, the conjugacy relation on $\Aut{P}$ is precisely the isomorphism relation for these structures expanded by an automorphism.

However, this perspective does not play particularly well with finite partial automorphisms, which we show by example. Define $f,g \in \Aut{\Q}$ by $f \paren{x} = x+1$ and $g \paren{x} = x-1$, and let $\gamma \from \Q \rightharpoonup \Q$ be the partial map given by $\gamma = \brace{\paren{0,0}}$. Then $\gamma$ is vacuously a partial isomorphism\footnote{Recall that if a language has a function symbol, partial isomorphisms are considered with respect to the graph relation. Thus, $\gamma$ preserves the function symbol precisely when $f \paren{x} = y \iff g \paren{\gamma \paren{x}} = \gamma \paren{y}$ for all $x,y \in \dom{\gamma}$.} from $\paren{\Q,f}$ to $\paren{\Q,g}$. However, clearly $f$ and $g$ are not conjugate (since all points have positive parity for $f$ and negative for $g$), so $\gamma$ cannot extend to a full automorphism from $\paren{\Q,f}$ to $\paren{\Q,g}$. One may construct more complex examples to show this continues to fail even if we require $\gamma$ to preserve parities.

From this example, the reader may get the impression that extending finite partial isomorphisms to full isomorphisms would be far too ambitious of an expectation. However, by encoding the action of automorphisms with a different type of language, this becomes an attainable goal.

\begin{defn}
    We let $L$ be the language $\brace{b_{i} : i \in \Z}$, where each $b_{i}$ is a binary relation symbol. Then to each $f \in \Aut{P}$, we associate an $L$-structure -- denoted $P_{f}$ -- with the interpretation $b_{i}^{f} \paren{x,y} \defiff x \leq f^{i} \paren{y}$ for each $i \in \Z$. 
\end{defn}

We remark that $b_{i}^{f}$ was defined in \cite{KT01}, though it was not treated as a first-order language. For convenience of terminology, we often identify $b_{i}^{f} \paren{x,y}$ with its truth value in $2 = \brace{0,1}$, and we let $\mathbf{b}^{f} \paren{x,y}$ abbreviate the resulting sequence $\paren{b_{i}^{f} \paren{x,y}}_{i \in \Z} \in 2^{\Z}$. This allows us to use phrases like ``periodic'', ``eventually periodic'', and so on.

\begin{lem}\label{BDefinableRelations}
    Let $f \in \Aut{P}$.
    \begin{enumerate}
        \item The function $f$ and the order $\leq$ are $0$-definable in $P_{f}$.
        \item For each $1 \leq n < \infty$, the relation $\sp{x,f} = n$ is $0$-definable in $P_{f}$.\footnote{A priori, the same can not be said for the case $\sp{x,f} = \infty$.}
        \item For each $x,y \in P$ with $\sigma := \parity{x,f} \neq 0$ and $n := \sp{x,f}$, we have $b_{i}^{f} \paren{x,y} \implies b_{i + \sigma \cdot n}^{f} \paren{x,y}$ and $b_{i}^{f} \paren{y,x} \implies b_{i + \sigma \cdot n}^{f} \paren{y,x}$ for all $i \in \Z$. Thus, the sequences $\mathbf{b}^{f} \paren{x,y}$ and $\mathbf{b}^{f} \paren{y,x}$ are eventually periodic on both sides with period $n$.\footnote{The same need not hold for the case $\sp{x,f} = \infty$.}
        \item For each $x,y,z \in P$, we have $b_{i}^{f} \paren{x,y} \wedge b_{j}^{f} \paren{y,z} \implies b_{i+j}^{f} \paren{x,z}$.
    \end{enumerate}
\end{lem}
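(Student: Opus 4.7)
My plan is to verify each clause directly from the definition $b_i^f(x,y) \iff x \leq f^i(y)$.

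For (1), the order is defined by the atomic formula $b_0(x,y)$, since $b_0^f(x,y) \iff x \leq y$. For the graph of $f$, I would observe that $y = f(x)$ if and only if $f(x) \leq y$ and $y \leq f(x)$, which in turn is equivalent to $x \leq f^{-1}(y)$ and $y \leq f(x)$; hence $y = f(x) \iff b_{-1}(x,y) \wedge b_{1}(y,x)$.

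For (2), I would write $\sp{x,f} = n$ as the conjunction ``$x \perp f^i(x)$ for every $1 \leq i < n$, and $x \parallel f^n(x)$.'' Using (1), the relation $x \parallel f^n(x)$ translates to $b_n(x,x) \vee b_{-n}(x,x)$, while $x \perp f^i(x)$ translates to $\neg b_i(x,x) \wedge \neg b_{-i}(x,x)$. Concatenating these gives the explicit $0$-definition.

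For (3), the main input is that $\sigma$ and $n$ are chosen so that $x$ and $f^{\sigma n}(x)$ are comparable in the direction prescribed by $\sigma$; concretely, $x \leq f^{\sigma n}(x)$ when $\sigma = +1$, and $f^{\sigma n}(x) = f^{-n}(x) \geq x$ when $\sigma = -1$ (applying $f^{-n}$ to $f^n(x) \leq x$). So in either case $x \leq f^{\sigma n}(x)$. Now if $b_i^f(x,y)$ holds, then $x \leq f^{\sigma n}(x) \leq f^{\sigma n}(f^i(y)) = f^{i+\sigma n}(y)$, giving $b_{i+\sigma n}^f(x,y)$. For the companion implication on $b_i^f(y,x)$, I would apply $f^i$ to the inequality $x \leq f^{\sigma n}(x)$ to get $f^i(x) \leq f^{i+\sigma n}(x)$, and then chain with $y \leq f^i(x)$. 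For the eventual periodicity claim, I would note that the implication forces $i \mapsto b_i^f(x,y)$ to be non-decreasing along each residue class modulo $n$ when $i$ moves in the direction $\sigma$. Since $\{0,1\}$ has only two values, each residue class therefore becomes eventually constant at both $\pm\infty$, so the full sequence is eventually periodic of period $n$ on both sides. The same reasoning handles $\mathbf{b}^f(y,x)$.

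For (4), I would just chain inequalities: $x \leq f^i(y)$ and $y \leq f^j(z)$ give $x \leq f^i(y) \leq f^i(f^j(z)) = f^{i+j}(z)$, hence $b_{i+j}^f(x,z)$. None of these arguments poses a real obstacle; the only mildly subtle point is (3), where one must correctly handle the two sign cases and then argue that one-sided monotonicity along arithmetic progressions of step $n$ suffices for two-sided eventual periodicity of period $n$.
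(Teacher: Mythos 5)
Your proposal is correct and follows the same route as the paper, which simply declares the lemma a straightforward application of the definition $b_i^f(x,y) \iff x \leq f^i(y)$ and records the one formula you also use, namely $f^k(x)=y \iff b_{-k}(x,y) \wedge b_k(y,x)$ (your case $k=1$). Your added details — the sign analysis giving $x \leq f^{\sigma n}(x)$ in (3) and the observation that a monotone $\{0,1\}$-sequence along each residue class mod $n$ is eventually constant at both ends — are exactly the verifications the paper leaves implicit.
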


\begin{proof}
    Straightforward application of definitions. However, for (a) we will explicitly point out that $f^{k} \paren{x} = y$ if and only if $b_{-k}^{f} \paren{x,y}$ and $b_{k}^{f} \paren{y,x}$ hold.
\end{proof}

We claim that this language encodes conjugacy in $\Aut{P}$, in the same way adding a unary function symbol does.

\begin{prop}\label{ConjugateIffLIsomorphic}
    Let $f,g \in \Aut{P}$, let $A \subseteq P$ be $f$-invariant, and let $B \subseteq P$ be $g$-invariant. Let $h \from A \to B$ be a function. Then the following are equivalent:
    \begin{enumerate}[label=(\roman*)]
        \item $h$ is an order embedding with $h \circ f = g \circ h$;
        \item $h$ is an embedding of $L$-structures (where $A$ is a substructure of $P_{f}$ and $B$ is a substructure of $P_{g}$);
        \item $h$ is an embedding of $L'$-structures (where $L' := \brace{b_{i} : i=-1,0,1} \subseteq L$, and $A$ and $B$ are substructures in the associated reducts).
    \end{enumerate}
    In particular, $f$ and $g$ are conjugate in $\Aut{P}$ if and only if $P_{f}$ is isomorphic to $P_{g}$, if and only if their $L'$-reducts are isomorphic.
\end{prop}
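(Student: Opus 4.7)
I would establish the cycle (i) $\Rightarrow$ (ii) $\Rightarrow$ (iii) $\Rightarrow$ (i); the ``in particular'' consequence then follows by specializing to $A = B = P$ with $h$ a bijection, since in that case each notion of embedding becomes the corresponding notion of isomorphism, and (i) becomes exactly the equation $g = hfh^{-1}$ defining conjugacy.

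For (i) $\Rightarrow$ (ii), I would first iterate the intertwining $h \circ f = g \circ h$ to obtain $h \circ f^{i} = g^{i} \circ h$ for every $i \in \Z$. Combined with the order-embedding property, this yields
$$b_{i}^{f} \paren{x,y} \iff x \leq f^{i} \paren{y} \iff h \paren{x} \leq g^{i} \paren{h \paren{y}} \iff b_{i}^{g} \paren{h \paren{x},h \paren{y}}$$
for all $x,y \in A$ and $i \in \Z$, so $h$ preserves each $b_{i}$ in both directions; injectivity is automatic from antisymmetry applied to the order-embedding property. The step (ii) $\Rightarrow$ (iii) is immediate, since $L' \subseteq L$.

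The substantive direction is (iii) $\Rightarrow$ (i). The key point, already noted in the proof of \Cref{BDefinableRelations}(a), is that the graph of $f$ is captured already by $L'$: $f \paren{x} = y$ if and only if both $b_{-1}^{f} \paren{x,y}$ and $b_{1}^{f} \paren{y,x}$ hold. To exploit this, fix $x \in A$ and set $y := f \paren{x}$, which lies in $A$ by $f$-invariance. Preservation of $b_{-1}$ and $b_{1}$ then gives $b_{-1}^{g} \paren{h \paren{x},h \paren{y}}$ and $b_{1}^{g} \paren{h \paren{y},h \paren{x}}$, which unpack to $g \paren{h \paren{x}} \leq h \paren{y}$ and $h \paren{y} \leq g \paren{h \paren{x}}$ respectively; antisymmetry forces $h \paren{f \paren{x}} = g \paren{h \paren{x}}$. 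Meanwhile, preservation of $b_{0}$ is precisely the order-embedding condition, since $b_{0}^{f} \paren{x,y} \iff x \leq y$.

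I do not anticipate any serious obstacle. The argument reduces to bookkeeping once one recognizes that the three relations $b_{-1}$, $b_{0}$, $b_{1}$ already encode both the order and the graph of $f$, so that an $L'$-embedding is automatically forced to intertwine $f$ with $g$. This is precisely the design feature that makes $L'$ (and hence $L$) the right first-order language for studying conjugacy in $\Aut{P}$.
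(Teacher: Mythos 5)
Your proposal is correct and follows essentially the same route as the paper: the cycle (i)$\Rightarrow$(ii)$\Rightarrow$(iii)$\Rightarrow$(i), with the intertwining iterated for (i)$\Rightarrow$(ii) and the observation that $b_{-1}$, $b_{0}$, $b_{1}$ encode the order and the graph of $f$ for (iii)$\Rightarrow$(i). The only cosmetic difference is that you fix $y := f\paren{x}$ and use one direction of preservation plus antisymmetry, whereas the paper runs the biconditional chain directly; the content is identical.
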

\begin{proof}
    The ``in particular'' statement follows immediately from the equivalences, by letting $A = B = P$ and supposing $h$ is a bijection.
    \begin{itemize}
        \item (i) implies (ii):\\
        Since $h \circ f = g \circ h$, a two-sided induction shows $h \circ f^{k} = g^{k} \circ h$ for every $k \in \Z$. Then since $h$ is order-preserving, $b_{k}^{f} \paren{x,y} \iff x \leq f^{k} \paren{y} \iff h \paren{x} \leq h \paren{f^{k} \paren{y}} = g^{k} \paren{h \paren{y}} \iff b_{k}^{g} \paren{h \paren{x},h \paren{y}}$. Thus $h$ is an $L$-embedding.
        \item (ii) implies (iii):\\
        Trivial.
        \item (iii) implies (i):\\
        For each $x,y \in A$, we have $x \leq y \iff b_{0}^{f} \paren{x,y} \iff b_{0}^{g} \paren{h \paren{x},h \paren{y}} \iff h \paren{x} \leq h \paren{y}$. Hence $h$ is order-preserving.
        
        Moreover, for each $x,y \in A$: 
        \begin{align*}
            f \paren{x} = y &\iff b_{1}^{f} \paren{y,x} \wedge b_{-1}^{f} \paren{x,y}\\
            &\iff b_{1}^{g} \paren{h \paren{y},h \paren{x}} \wedge b_{-1}^{g} \paren{h \paren{x},h \paren{y}}\\
            &\iff g \paren{h \paren{x}} = h \paren{y} = h \paren{f \paren{x}}
        \end{align*}
        Thus $g \circ h = h \circ f$ as desired. \qedhere
    \end{itemize}
\end{proof}

In the example of $\Q$ at the beginning of the section, the defined $\gamma$ is not a partial isomorphism of the $L$-structures $P_{f}$ and $P_{g}$, since $b_{1}^{f} \paren{0, 0} \iff 0 \leq f \paren{0} = 1$ holds and $b_{1}^{g} \paren{\gamma \paren{0},\gamma \paren{0}} \iff 0 = \gamma \paren{0} \leq g \paren{\gamma \paren{0}} = g \paren{0} = -1$ fails.

Informally, since the language $L$ is infinite, a finite substructure of $P_{f}$ can still ``remember'' an infinite amount of information --- namely, the relations between entire orbits of points. Indeed, the following lemma makes this intuition precise.
\begin{lem}\label{FinitePartialLIsomCanExtend}
    Let $f,g \in \Aut{P}$. If $\gamma \from P_{f} \rightharpoonup P_{g}$ is a finite partial $L$-isomorphism, then $\gamma$ uniquely extends to a partial $L$-isomorphism $h \from f^{\Z} \brack{\dom{\gamma}} \to g^{\Z} \brack{\ran{\gamma}}$.
\end{lem}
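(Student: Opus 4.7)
The plan is to define the extension in the only way it could possibly be: for any $z \in f^{\Z}\brack{\dom{\gamma}}$, write $z = f^{k}(x)$ for some $x \in \dom{\gamma}$ and $k \in \Z$, and set $h(z) := g^{k}(\gamma(x))$. Uniqueness is then forced by \Cref{BDefinableRelations}(a), which tells us that $f$ is $0$-definable in $P_{f}$ (and likewise $g$ in $P_{g}$), so any $L$-isomorphism extending $\gamma$ must commute with the $k$-th iterates and therefore coincide with $h$.

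The first step will be well-definedness. Suppose $f^{k}(x) = f^{j}(y)$ with $x,y \in \dom{\gamma}$; I need to show $g^{k}(\gamma(x)) = g^{j}(\gamma(y))$. The hypothesis is equivalent to $f^{k-j}(x) = y$, which by \Cref{BDefinableRelations}(a) is equivalent to $b_{-(k-j)}^{f}(x,y) \wedge b_{k-j}^{f}(y,x)$. Applying that $\gamma$ preserves each $b_i$, this transfers to $b_{-(k-j)}^{g}(\gamma(x),\gamma(y)) \wedge b_{k-j}^{g}(\gamma(y),\gamma(x))$, which again by \Cref{BDefinableRelations}(a) says $g^{k-j}(\gamma(x)) = \gamma(y)$, i.e., $g^{k}(\gamma(x)) = g^{j}(\gamma(y))$. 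So $h$ is a well-defined function, and by symmetry (applied to $\gamma^{-1}$), its inverse is well-defined too, making $h$ a bijection onto $g^{\Z}\brack{\ran{\gamma}}$.

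Next I verify that $h$ preserves each relation $b_{i}$. Given $z_{1} = f^{k}(x)$ and $z_{2} = f^{j}(y)$ with $x,y \in \dom{\gamma}$, we compute
\[
b_{i}^{f}(z_{1},z_{2}) \iff f^{k}(x) \leq f^{i+j}(y) \iff x \leq f^{i+j-k}(y) \iff b_{i+j-k}^{f}(x,y),
\]
using that $f$ is an order-automorphism. Since $\gamma$ is an $L$-isomorphism, the last condition is equivalent to $b_{i+j-k}^{g}(\gamma(x),\gamma(y))$, and reversing the same computation for $g$ yields $b_{i}^{g}(g^{k}(\gamma(x)), g^{j}(\gamma(y))) = b_{i}^{g}(h(z_{1}),h(z_{2}))$. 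Hence $h$ is an $L$-embedding, and being a bijection onto its image, an $L$-isomorphism.

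The only real subtlety is the well-definedness step, because orbits of distinct points in $\dom{\gamma}$ could a priori collide in complicated ways; but the finite language already encodes the function $f$ via the pair $(b_{-1}, b_{1})$, so any such collision is detected by $\gamma$ and faithfully transported to $g$. Once well-definedness is established, everything else is routine algebra with orbits.
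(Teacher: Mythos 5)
Your proof is correct and follows essentially the same route as the paper: you define $h(f^{k}(x)) := g^{k}(\gamma(x))$, verify well-definedness (and injectivity) by translating orbit equalities through the $b_{i}$-relations exactly as in the paper's argument, and then check $b_{i}$-preservation by the same index-shifting computation. The only cosmetic difference is that you verify all $b_{i}$ directly rather than reducing to order-preservation plus $h \circ f = g \circ h$, which changes nothing of substance.
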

\begin{proof}
    If such $h$ exists, it must be defined by letting $h \paren{f^{i} \paren{x}} := g^{i} \paren{\gamma \paren{x}}$ for each $x \in \dom{\gamma}$. The proof that this is well defined proceeds similarly to the proof of \Cref{ConjugateIffLIsomorphic}: for $x,y \in \dom{\gamma}$, we have $f^{i} \paren{x} \leq f^{j} \paren{y} \iff x \leq f^{j-i} \paren{y} \iff b_{j-i}^{f} \paren{x,y} \iff b_{j-i}^{g} \paren{\gamma \paren{x},\gamma \paren{y}} \iff \gamma \paren{x} \leq g^{j-i} \paren{\gamma \paren{y}} \iff g^{i} \paren{\gamma \paren{x}} \leq g^{j} \paren{\gamma \paren{y}}$, and similarly for $\geq$. Thus $f^{i} \paren{x} = f^{j} \paren{y} \iff g^{i} \paren{\gamma \paren{x}} = g^{j} \paren{\gamma \paren{y}}$ implies the map $h$ is a well-defined injection. Moreover, clearly $\ran{h} = g^{\Z} \brack{\ran{\gamma}}$, and $h \circ f = g \circ h$.
\end{proof}

\begin{defn}
    For $p \in \mathcal{P}$ and $x,y \in \dur{p}$, say $f, g \in \brack{p}$ are \textbf{isomorphic over $\paren{x,y}$} if $\mathbf{b}^{f} \paren{x,y} = \mathbf{b}^{g} \paren{x,y}$. We say $f,g$ are \textbf{isomorphic over $p$} if they are isomorphic over $\paren{x,y}$ for every $x,y \in \dur{p}$.
    
    We say $p$ \textbf{determines $\paren{x,y}$} if every two extensions $f,g \in \brack{p}$ are isomorphic over $\paren{x,y}$, and we say $p$ is \textbf{determined} if every two extensions $f,g \in \brack{p}$ are isomorphic over $p$.
\end{defn}

\begin{rmk}
    Isomorphism over $p$ was originally defined in \cite{KT01} as follows: $f$ and $g$ are isomorphic over $p$ if there is an order-isomorphism $\theta \from f^{\Z} \brack{\dom{p}} \to g^{\Z} \brack{\dom{p}}$ which fixes $\dom{p}$ and satisfies $\theta \circ f = g \circ \theta$. It is easily shown that this definition is equivalent to the one we give above, in light of \Cref{FinitePartialLIsomCanExtend} and \Cref{ConjugateIffLIsomorphic}.
\end{rmk}

Isomorphism over a pair of points is really a statement about their respective $p$-orbits, as the following lemma makes precise.

\begin{lem}\label{IsoOverPairOrbitInvariant}
    Let $p \in \mathcal{P}$ and $x,y \in \dur{p}$. Then $f,g \in \brack{p}$ are isomorphic over $\paren{x,y}$ iff they are isomorphic over $\paren{p^{k} \paren{x}, p^{\ell} \paren{y}}$\footnote{Whenever we use this notation, we are assuming $p$ is defined on all the intermediate steps, e.g. $0 \leq i < k$ implies $p^{i} \paren{x} \in \dom{p}$.} for any $k,\ell \in \Z$.
\end{lem}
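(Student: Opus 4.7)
The plan is to unfold the definition of $b_i^f$ and exploit the fact that since $p \subseteq f$ and $p \subseteq g$, we have $p^m(z) = f^m(z) = g^m(z)$ whenever $p^m(z)$ is defined (which is exactly the situation assumed in the footnote). Thus iterates of $p$ on $x$ and $y$ may be freely replaced by the corresponding iterates of $f$ (or of $g$), and the proof reduces to tracking an index shift.

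More concretely, I would first show the key identity: for any $i \in \Z$ and any $k, \ell \in \Z$ for which $p^k(x)$ and $p^\ell(y)$ are defined,
\[
b_i^f\paren{p^k(x), p^\ell(y)} \iff p^k(x) \leq f^i\paren{p^\ell(y)} \iff f^k(x) \leq f^{i+\ell}(y) \iff x \leq f^{i+\ell-k}(y) \iff b_{i+\ell-k}^f(x,y),
\]
where the middle equivalence uses $p \subseteq f$ and the next uses that $f^k$ is an order-automorphism. The same calculation with $g$ in place of $f$ gives $b_i^g\paren{p^k(x), p^\ell(y)} \iff b_{i+\ell-k}^g(x,y)$.

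In other words, the bi-infinite sequence $\mathbf{b}^f\paren{p^k(x), p^\ell(y)}$ is just the shift of $\mathbf{b}^f(x,y)$ by $\ell - k$, and analogously for $g$. Since the shift operator on $2^\Z$ is a bijection, two sequences are equal if and only if their shifts by the same amount are equal; hence $\mathbf{b}^f(x,y) = \mathbf{b}^g(x,y)$ if and only if $\mathbf{b}^f\paren{p^k(x),p^\ell(y)} = \mathbf{b}^g\paren{p^k(x),p^\ell(y)}$, which is exactly the biconditional asserted by the lemma.

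There is no real obstacle here: once one writes $p^k(x) = f^k(x)$ (justified by $p \subseteq f$ and the standing convention that $p^k(x)$ being written presupposes all intermediate iterates lie in $\dom{p}$), the result is a direct computation. The only thing to be mindful of is the bookkeeping of the shift index $\ell - k$ and the fact that the calculation must be done on both the $f$-side and the $g$-side in parallel so that the shift cancels when comparing the two sequences.
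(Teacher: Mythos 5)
Your proposal is correct and follows essentially the same route as the paper: the paper's proof computes $b_{i}^{f}\paren{p^{k}\paren{x},p^{\ell}\paren{y}} \iff b_{i+\ell-k}^{f}\paren{x,y}$ using $p \subseteq f$ and applies the same argument to $g$, which is exactly your shift computation. Your write-up just makes the intermediate steps and the shift-bijection conclusion slightly more explicit.
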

\begin{proof}
    Suppose $f,g$ isomorphic over $\paren{x,y}$. For each $k,\ell \in \Z$, since $f$ extends $p$:
    \begin{align*}
        b_{i}^{f} \paren{p^{k} \paren{x}, p^{\ell} \paren{y}} &\iff p^{k} \paren{x} \leq f^{i} \paren{p^{\ell} \paren{y}}\\
        &\iff x \leq f^{i+\ell-k} \paren{y}\\
        &\iff b_{i+\ell-k}^{f} \paren{x, y}
    \end{align*}
    The same argument holds for $g$ also; the result follows.
\end{proof}

Moreover, suppose $p \in \mathcal{P}$ determines $\paren{x,y}$, and $q \supseteq p$. Then $q$ also determines $\paren{x,y}$. If $p$ is determined, and every $q$-orbit contains some $p$-orbit, then $q$ is also determined; in this case, we say $q$ is an \textbf{economical} extension of $p$, following the terminology of \cite{KT01}.

\subsection{Orders on orbitals}\label{subsec:OrdersOnOrbitals}

If $P$ is a linear order, this induces a natural linear order on $\OQ{f}{P}$ --- indeed, this linear order was used by Truss (\cite{Tru92}) to characterize the generic automorphism of $\Q$. For arbitrary posets, we can still recover order structure on $\OQ{f}{P}$ from $P$, but the ``correct'' definition is not immediately clear. We have identified two similar but distinct notions, both of which will come into play. Apart from the aforementioned paper of Truss, we are unaware of any study of orbitals and orders between them, so the following appears to be new.

\begin{defn}
    Let $f \in \Aut{P}$ and $x,y \in P$. We define the relations $<_{f}^{s}$ and $\leq_{f}^{w}$ --- called the \textbf{strong} and \textbf{weak} orders, respectively --- by: $$\orb{f}{x} <_{f}^{s} \orb{f}{y} \defiff \forall x' \sim_{f} x \, \forall y' \sim_{f} y \paren{x' < y'};$$
    $$\orb{f}{x} \leq_{f}^{w} \orb{f}{y} \defiff \exists x' \sim_{f} x \, \exists y' \sim_{f} y \paren{x' \leq y'}.$$
\end{defn}

\begin{prop}\label{OrbitalOrder}
    $<_{f}^{s}$ and $\leq_{f}^{w}$ are partial orders on $\OQ{f}{P}$.
\end{prop}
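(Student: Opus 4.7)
The plan is to verify both relations satisfy the defining axioms of a partial order, treating $<_{f}^{s}$ as a strict partial order (irreflexive and transitive) and $\leq_{f}^{w}$ as a reflexive partial order (reflexive, transitive, and antisymmetric). Well-definedness on the quotient is automatic: each definition either quantifies universally over all representatives or is phrased existentially, so the truth value depends only on the orbital class.

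For $<_{f}^{s}$, irreflexivity is immediate: any orbital is nonempty, so taking $x' = y' = x$ would force $x < x$. Transitivity is equally direct: given $\orb{f}{x} <_{f}^{s} \orb{f}{y} <_{f}^{s} \orb{f}{z}$ and arbitrary $x' \sim_{f} x$ and $z' \sim_{f} z$, any fixed choice of $y' \sim_{f} y$ yields $x' < y' < z'$, so $x' < z'$. No finesse beyond the definitions is required here.

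For $\leq_{f}^{w}$, reflexivity is witnessed by $x' = y' = x$. Transitivity takes a bit more care: given witnesses $x_{1} \leq y_{1}$ for $\orb{f}{x} \leq_{f}^{w} \orb{f}{y}$ and $y_{2} \leq z_{2}$ for $\orb{f}{y} \leq_{f}^{w} \orb{f}{z}$, the points $y_{1}, y_{2}$ lie in the same orbital, so unpacking $\sim_{f}$ gives some $i \in \Z$ with $f^{i}(y_{1}) \leq y_{2}$. Applying $f^{i}$ to the first inequality yields $f^{i}(x_{1}) \leq f^{i}(y_{1}) \leq y_{2} \leq z_{2}$, and since orbitals are orbit-invariant, $f^{i}(x_{1}) \sim_{f} x$, producing the needed witness pair for $\orb{f}{x} \leq_{f}^{w} \orb{f}{z}$.

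The main obstacle, and the only step that uses genuinely nontrivial structure, is antisymmetry of $\leq_{f}^{w}$. But this is exactly the content of \Cref{InterlacingOrbitalsEqual}: suppose $\orb{f}{x} \leq_{f}^{w} \orb{f}{y}$ and $\orb{f}{y} \leq_{f}^{w} \orb{f}{x}$, so there exist $x', x'' \in \orb{f}{x}$ and $y', y'' \in \orb{f}{y}$ with $x' \leq y'$ and $y'' \leq x''$. The lemma then delivers $x \sim_{f} y$, i.e. $\orb{f}{x} = \orb{f}{y}$. With this, all the order axioms are in place.
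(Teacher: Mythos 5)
Your proof is correct and follows essentially the same route as the paper: direct verification of irreflexivity/reflexivity and transitivity from the order on $P$, with antisymmetry of $\leq_{f}^{w}$ delegated to \Cref{InterlacingOrbitalsEqual}. The only difference is that you spell out the transitivity of $\leq_{f}^{w}$ (realigning the two witnesses in $\orb{f}{y}$ via a power of $f$), a step the paper dismisses as straightforward; your version is a correct filling-in of that detail.
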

\begin{proof}
    Transitivity of both relations is a straightforward consequence of transitivity of the order on $P$. Since $x \nless x$, we have $\orb{f}{x} \nless_{f}^{s} \orb{f}{x}$, so $<_{f}^{s}$ is irreflexive. Thus $<_{f}^{s}$ is a (strict) partial order. Similarly, since $x \leq x$, we have $\orb{f}{x} \leq_{f}^{w} \orb{f}{x}$, so $\leq_{f}^{w}$ is reflexive. Moreover, antisymmetry is exactly \Cref{InterlacingOrbitalsEqual}. Thus $\leq_{f}^{w}$ is a (non-strict) partial order.
\end{proof}



\begin{prop}\label{OrbitalOrderWeakerCondition}
    Let $f \in \Aut{P}$ and $x,y \in P$.
    \begin{enumerate}[label=(\alph*)]
        \item The following are equivalent:
        \begin{enumerate}[label=(\roman*)]
            \item $\orb{f}{x} <_{f}^{s} \orb{f}{y}$;
            \item $f^{n} \paren{x} < y$ for every $n \in \Z$;
            \item $x < f^{n} \paren{y}$ for every $n \in \Z$;
            \item $\mathbf{b}^{f} \paren{x,y} = \mathbf{1}$ and $x \neq y$.
        \end{enumerate}
        \item The following are equivalent:
        \begin{enumerate}[label=(\roman*)]
            \item $\orb{f}{x} \leq_{f}^{w} \orb{f}{y}$;
            \item $f^{n} \paren{x} \leq y$ for some $n \in \Z$;
            \item $x \leq f^{n} \paren{y}$ for some $n \in \Z$;
            \item $\mathbf{b}^{f} \paren{x,y} \neq \mathbf{0}$.
        \end{enumerate}
    \end{enumerate}
\end{prop}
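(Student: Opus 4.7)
My plan is to prove both parts by establishing a cycle of implications (i) $\Rightarrow$ (ii) $\Leftrightarrow$ (iii) $\Leftrightarrow$ (iv) $\Rightarrow$ (i), which will reduce the work since (ii) and (iii) are trivially interconvertible by applying $f^{-n}$, and (iv) is just a restatement of (ii)/(iii) in $L$-language. For part (a), the implication (i) $\Rightarrow$ (ii) is immediate: take $x' := f^{n}\paren{x}$ and $y' := y$, both of which lie in the respective orbitals, and apply the universal statement defining $<_{f}^{s}$. The equivalence (ii) $\Leftrightarrow$ (iii) follows by applying the automorphism $f^{-n}$ (or $f^{n}$) to both sides of the inequality. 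The direction (iii) $\Rightarrow$ (iv) is immediate from the definition $b_{n}^{f} \paren{x,y} \iff x \leq f^{n} \paren{y}$, together with the observation that $x < f^{0} \paren{y} = y$ forces $x \neq y$.

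The main obstacle is the reverse (iv) $\Rightarrow$ (iii), which requires upgrading $\leq$ to strict $<$. Assume $\mathbf{b}^{f} \paren{x,y} = \mathbf{1}$ and $x \neq y$; then $x \leq f^{n} \paren{y}$ for every $n$, and I must rule out equality. Suppose $x = f^{n_{0}} \paren{y}$ for some $n_{0}$. Substituting gives $f^{n_{0}} \paren{y} \leq f^{n} \paren{y}$, so $y \leq f^{n - n_{0}} \paren{y}$ for every $n \in \Z$. Taking the values $n - n_{0} = \pm 1$ and using antisymmetry forces $f\paren{y} = y$, whence $y$ is fixed by every power of $f$ and $x = f^{n_{0}} \paren{y} = y$, contradicting $x \neq y$. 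Finally, (iii) $\Rightarrow$ (i) is a sandwich argument: given arbitrary $x' \sim_{f} x$ and $y' \sim_{f} y$, pick witnessing indices so that $x' \leq f^{j} \paren{x}$ and $f^{k} \paren{y} \leq y'$; applying $f^{j}$ to the inequality $x < f^{k - j} \paren{y}$ from (iii) yields $f^{j} \paren{x} < f^{k} \paren{y}$, and chaining gives $x' \leq f^{j} \paren{x} < f^{k} \paren{y} \leq y'$.

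Part (b) is structurally analogous but easier, since no strict inequality needs to be preserved and the existential quantifier cooperates with the order directly. For (i) $\Rightarrow$ (ii), given witnesses $x' \sim_{f} x$, $y' \sim_{f} y$ with $x' \leq y'$, choose $i, \ell$ with $f^{i} \paren{x} \leq x'$ and $y' \leq f^{\ell} \paren{y}$, so $f^{i - \ell} \paren{x} \leq y$. The equivalences (ii) $\Leftrightarrow$ (iii) $\Leftrightarrow$ (iv) are then immediate as in part (a). For (iii) $\Rightarrow$ (i), the witnesses $x' := x$ and $y' := f^{n} \paren{y}$ already lie in the required orbitals and satisfy $x' \leq y'$ directly. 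Throughout, I will lean on \Cref{OrbitalsPartitionPoset} and the definitions in \cref{subsec:Orbitals,subsec:TheLanguageL} without further comment.
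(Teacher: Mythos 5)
Your proof is correct, and its overall shape --- a cycle of implications, with sandwich-style arguments translating between the orbital-level statements and the pointwise ones, and trivial conversions via $f^{\pm n}$ --- matches the paper's. The one place where you genuinely diverge is the step (a)(iv) $\Rightarrow$ (a)(iii), i.e.\ upgrading $x \leq f^{n} \paren{y}$ to strict inequality. The paper rules out $x = f^{n} \paren{y}$ by a case split on $\parity{x,f}$: in the parity-$0$ case it argues that $b_{0}^{f} \paren{x,y}$ forces $x = y$, and in the nonzero-parity case it produces $k$ with $y < f^{k} \paren{x}$, contradicting $\mathbf{b}^{f} \paren{x,y} = \mathbf{1}$. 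Your argument is more elementary and self-contained: from $x = f^{n_{0}} \paren{y}$ together with $\mathbf{b}^{f} \paren{x,y} = \mathbf{1}$ you get $y \leq f^{m} \paren{y}$ for every $m \in \Z$, and taking $m = \pm 1$ with antisymmetry yields $f \paren{y} = y$, hence $x = y$, a contradiction --- no appeal to spiral length or parity is needed. Both are valid; yours buys a shorter verification independent of the parity machinery, while the paper's case analysis keeps the step phrased in terms of the notions (parity, spiral length) it has just introduced and uses throughout. The remaining steps (your sandwich arguments for (a)(iii) $\Rightarrow$ (a)(i) and for part (b)) are essentially identical to the paper's, which runs the same sandwich from (a)(ii) instead of (a)(iii).
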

\begin{proof}
    \begin{itemize}
        \item (a)(i) implies (a)(ii), and (b)(ii) implies (b)(i):\\
        Immediate from the definitions and orbit-invariance of orbitals.
        \item (a)(ii) implies (a)(i):\\
        Suppose $x' \sim_{f} x$ and $y' \sim_{f} y$; by definition of $\sim_{f}$, there are $i, j \in \Z$ such that $x' \leq f^{i} \paren{x}$, and $f^{j} \paren{y} \leq y'$. By hypothesis, $f^{i-j} \paren{x} < y$, and so $x' \leq f^{i} \paren{x} = f^{j} \paren{f^{i-j} \paren{x}} < f^{j} \paren{y} \leq y'$. It follows that $\orb{f}{x} <_{f}^{s} \orb{f}{y}$.
        \item (b)(i) implies (b)(ii):\\
        Suppose $x' \sim_{f} x$ and $y' \sim_{f} y$ such that $x' \leq y'$; by definition of $\sim_{f}$, there are $i, j \in \Z$ such that $f^{i} \paren{x} \leq x'$, and $y' \leq f^{j} \paren{y}$. Then by transitivity, $f^{i} \paren{x} \leq f^{j} \paren{y}$, and so $f^{i-j} \paren{x} \leq y$.
        \item (ii) iff (iii) for both (a) and (b):\\
        Immediate from $f^{n} \paren{x} < y \iff x < f^{-n} \paren{y}$.
        \item (b)(iii) iff (b)(iv):\\
        Immediate from the definitions.
        \item (a)(iii) implies (a)(iv):\\
        For all $n \in \Z$, $x < f^{n} \paren{y} \implies b_{n}^{f} \paren{x,y}$. Moreover, the case $n = 0$ implies $x < y$, so $x \neq y$.
        \item (a)(iv) implies (a)(iii):\\
        For all $n \in \Z$, $b_{n}^{f} \paren{x,y} \implies x \leq f^{n} \paren{y}$. Suppose for contradiction $x = f^{n} \paren{y}$ for some $n \in \Z$. Then in particular, $x \sim_{f} y$. If $\parity{x,f} = 0$, then $b_{0}^{f} \paren{x,y} \iff x \leq y \implies x = y$, contradicting $x \neq y$. If $\parity{x,f} \neq 0$, then there is some $k \in \Z$ such that $y < f^{k} \paren{x} \implies \neg b_{-k}^{f} \paren{x,y}$, contradicting $\mathbf{b}^{f} \paren{x,y} = \mathbf{1}$. Hence $x \leq f^{n} \paren{y}$ for every $n \in \Z$.
    \end{itemize}
\end{proof}

\begin{cor}
    Suppose $p \in \mathcal{P}$, and $f, g \in \brack{p}$ are isomorphic over $\paren{x,y}$. Then $\orb{f}{x} <_{f}^{s} \orb{f}{y} \iff \orb{g}{x} <_{g}^{s} \orb{g}{y}$, and $\orb{f}{x} \leq_{f}^{w} \orb{f}{y} \iff \orb{g}{x} \leq_{g}^{w} \orb{g}{y}$.
\end{cor}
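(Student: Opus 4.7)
The plan is to reduce the statement to the characterizations given by \Cref{OrbitalOrderWeakerCondition}, which express both orbital orders purely in terms of the sequence $\mathbf{b}^{f}\paren{x,y}$. By definition, $f$ and $g$ being isomorphic over $\paren{x,y}$ means exactly that $\mathbf{b}^{f}\paren{x,y} = \mathbf{b}^{g}\paren{x,y}$, so any statement about one reduces immediately to the analogous statement about the other.

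More concretely, first I would invoke \Cref{OrbitalOrderWeakerCondition}(b), equivalence (i)$\iff$(iv), to rewrite $\orb{f}{x} \leq_{f}^{w} \orb{f}{y}$ as the condition $\mathbf{b}^{f}\paren{x,y} \neq \mathbf{0}$, and similarly for $g$. Since $\mathbf{b}^{f}\paren{x,y} = \mathbf{b}^{g}\paren{x,y}$, the two conditions are identical, yielding the weak-order equivalence. Then I would do the analogous thing for the strong order using \Cref{OrbitalOrderWeakerCondition}(a), equivalence (i)$\iff$(iv): rewrite $\orb{f}{x} <_{f}^{s} \orb{f}{y}$ as the conjunction $\mathbf{b}^{f}\paren{x,y} = \mathbf{1} \wedge x \neq y$. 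The first conjunct is preserved by isomorphism over $\paren{x,y}$, and the second conjunct does not involve $f$ or $g$ at all, so it is trivially the same for both.

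There is essentially no obstacle here; the corollary is a direct consequence of the fact that \Cref{OrbitalOrderWeakerCondition} successfully isolated the combinatorics of orbital order in the language of $\mathbf{b}^{f}$-sequences. The only thing to verify is that one really does need both conjuncts in the strong-order characterization (since the $x \neq y$ disjunct is not captured by $\mathbf{b}$), but as noted this piece is invariant of the automorphism in question, so it causes no trouble.
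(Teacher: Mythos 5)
Your proposal is correct and is exactly the intended argument: the paper states this corollary without proof, as an immediate consequence of the equivalences (i)$\iff$(iv) in parts (a) and (b) of \Cref{OrbitalOrderWeakerCondition}, together with the definition of isomorphism over $\paren{x,y}$ as $\mathbf{b}^{f}\paren{x,y} = \mathbf{b}^{g}\paren{x,y}$. Your observation that the extra conjunct $x \neq y$ in the strong-order characterization is independent of the automorphism is the only point needing mention, and you handle it correctly.
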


\subsection{Useful combinatorial criteria}\label{subsec:UsefulCombinatorialCriteria}

Analyzing even finite $L$-structures will require us to consider the infinite $\mathbf{b}$-sequences. Fortunately, there are a number of finitary conditions which allow us to control these sequences in various ways. We collect these methods here.

\begin{lem}\label{OrbitalIsDirectedEquivs}
    For $f \in \Aut{P}$ and $x \in P$ with $\parity{x,f} = +1$ (resp. $-1$), the following are equivalent:
    \begin{enumerate}[label=(\roman*)]
        \item\label{OrbitalUpperDirected} $\orb{f}{x}$ is upper-directed.
        \item\label{OrbitalLowerDirected} $\orb{f}{x}$ is lower-directed.
        \item\label{aSeqEventuallyConstant} The sequence $\mathbf{b}_{\geq 0}^{f} \paren{x,x}$ (resp. $\mathbf{b}_{\leq 0}^{f} \paren{x,x}$) is eventually constant $1$.
        \item\label{bxyEventuallyConst} The sequence $\mathbf{b}^{f} \paren{x,y}$ is eventually constant on both sides for every $y \in P$.
        \item\label{byxEventuallyConst} The sequence $\mathbf{b}^{f} \paren{y,x}$ is eventually constant on both sides for every $y \in P$.
    \end{enumerate}
\end{lem}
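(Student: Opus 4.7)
The strategy is to establish, in the parity $+1$ case, the biconditionals (i)$\iff$(iii), (ii)$\iff$(iii), (iii)$\iff$(iv), and (iii)$\iff$(v). The parity $-1$ case then follows by applying the $+1$ result to $f^{-1}$, noting that $\parity{x, f^{-1}} = -\parity{x, f}$ and $b_i^{f^{-1}}(x, y) = b_{-i}^f(x, y)$, so each $\mathbf{b}$-sequence is simply reversed.

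The unifying preliminary observation, from \Cref{BDefinableRelations}(c), is that with $n := \sp{x, f}$, the implication $b_i^f \Rightarrow b_{i+n}^f$ makes each residue class mod $n$ of any sequence $\mathbf{b}^f(u, v)$ a non-decreasing $\{0, 1\}$-sequence in $i$. Hence, within each class, the values are all-1 or eventually 0 going left, and all-0 or eventually 1 going right; the whole sequence is eventually constant on a given side precisely when all $n$ classes agree there.

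For (iii)$\iff$(i), (ii): assuming (iii), i.e., $x \leq f^k(x)$ for all $k \geq N$, and given $u, v \in \orb{f}{x}$ sandwiched as $f^a(x) \leq u \leq f^b(x)$ and $f^{a'}(x) \leq v \leq f^{b'}(x)$, taking $c$ sufficiently large (resp.\ negative) makes $f^c(x) \in \orb{f}{x}$ a common upper (resp.\ lower) bound. Conversely, given (i) or (ii), I would apply directedness to $\{x, f(x), \ldots, f^{n-1}(x)\}$ to obtain a bound $z \in \orb{f}{x}$; since $z$ is itself sandwiched between iterates of $x$, this produces $n$ consecutive 1's somewhere in $\mathbf{b}^f(x, x)$, which then propagate by period to give eventual constant 1 on the right, i.e., (iii).

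For (iii)$\Rightarrow$(iv), (v), the right-hand side is routine: if $b_i^f(x, y) = 1$ then $x \leq f^i(y)$, and (iii) gives $x \leq f^k(x) \leq f^{k+i}(y)$ for $k \geq N$, yielding eventual 1; else the right half is identically 0. The subtle step, which is the main obstacle, is the left side of $\mathbf{b}^f(x, y)$: a priori (iii) constrains only $\mathbf{b}^f(x, x)$ on the right, and it is not obvious why this governs $\mathbf{b}^f(x, y)$ on the left for arbitrary $y$. I would resolve this by a propagation argument: if some residue class of $\mathbf{b}^f(x, y)$ has 1's at arbitrarily negative indices (hence is all-1 throughout), then for any other class $r'$, choosing $k \geq N$ with matching residue and using (iii) in the form $x \leq f^k(x) \leq f^{k+i}(y)$ shifts these 1's into class $r'$ at arbitrarily negative indices, forcing that class to be all-1 too. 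Hence $\mathbf{b}^f(x, y)$ is either identically $\mathbf{1}$ or eventually 0 on the left---eventually constant either way; the symmetric argument handles $\mathbf{b}^f(y, x)$. Finally, (iv)$\Rightarrow$(iii) and (v)$\Rightarrow$(iii) specialize to $y := x$: since $b_n^f(x, x) = 1$ and $\{k : b_k^f(x, x) = 1\}$ is closed under addition (by \Cref{BDefinableRelations}(d)), $\mathbf{b}^f(x, x)$ has infinitely many 1's on the right, and eventual constancy forces constancy at value 1.
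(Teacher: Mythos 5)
Your proposal is correct and takes essentially the same route as the paper: condition (iii) is the hub in both, the equivalences with upper/lower-directedness use the same sandwiching of a bound between iterates of $x$, the implication to (iv)/(v) rests on the same composition rule $b_i^f(x,x) \wedge b_j^f(x,y) \Rightarrow b_{i+j}^f(x,y)$ together with the period-$\sp{x,f}$ monotonicity, and your observation that ones at arbitrarily negative indices force the sequence to be identically $\mathbf{1}$ is just the contrapositive of the paper's step that a single failure $\neg b_{j'}^f(x,y)$ propagates leftward past $j'-N$. The only cosmetic differences are your residue-class bookkeeping and your explicit reduction of the parity $-1$ case to $+1$ via $f^{-1}$, where the paper simply appeals to symmetry.
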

\begin{proof}
We assume $\parity{x,f} = +1$, as the proof for the case $\parity{x,f} = -1$ is symmetric.
    \begin{itemize}
        \item \ref{OrbitalUpperDirected} implies \ref{aSeqEventuallyConstant}:\\
        Let $N := \sp{x,f}$. By \ref{OrbitalUpperDirected}, take $z \in \orb{f}{x}$ to be an upper bound for the set $\brace{f^{i} \paren{x} : 0 \leq i < N}$. Since $z \sim_{f} x$, there is some $j \in \Z$ such that $z \leq f^{j} \paren{x}$. Thus for $1 \leq i \leq N$ we have $f^{N-i} \paren{x} \leq z \leq f^{j} \paren{x}$ implies $x < f^{N} \paren{x} \leq f^{i} \paren{z} \leq f^{j+i} \paren{x}$, and so $b_{j+i}^{f} \paren{x,x} = 1$. Since $b_{i}^{f} \leq b_{i + N}^{f}$ for all $i$, it follows that $\mathbf{b}_{\geq 1}^{f}$ is eventually constant $1$.
        \item \ref{aSeqEventuallyConstant} implies \ref{OrbitalUpperDirected}:\\
        It suffices to show any two elements have an upper bound. Suppose $\mathbf{b}_{\geq 1}^{f} \paren{x,x}$ is eventually constant $1$, and let $x',x'' \in \orb{f}{x}$. Take $j_{1}, j_{2} \in \Z$ such that $x' \leq f^{j_{1}} \paren{x}$ and $x'' \leq f^{j_{2}} \paren{x}$. By hypothesis, there is $k$ large enough so that both $x \leq f^{k - j_{1}} \paren{x}$ and $x \leq f^{k - j_{2}} \paren{x}$. Then $f^{k} \paren{x}$ is an upper bound of $x'$ and $x''$: indeed, $x' \leq f^{j_{1}} \paren{x} \leq f^{j_{1}} \paren{f^{k - j_{1}} \paren{x}} = f^{k} \paren{x}$, and $x'' \leq f^{j_{2}} \paren{x} \leq f^{j_{2}} \paren{f^{k - j_{2}} \paren{x}} = f^{k} \paren{x}$.
        \item \ref{OrbitalLowerDirected} implies \ref{aSeqEventuallyConstant}:\\
        Similar to the upper-directed case: let $z \in \orb{f}{x}$ be a lower bound of the set $\brace{f^{i} \paren{x} : 0 \leq i < \sp{x,f}}$. Then choose $j \in \Z$ such that $f^{j} \paren{x} \leq z$, and so $x \leq f^{-j} \paren{z} \leq f^{i-j} \paren{x}$ for each $0 \leq i < \sp{x,f}$. Combined with $b_{i}^{f} \leq b_{i+\sp{x,f}}^{f}$, it follows that $\mathbf{b}_{\geq 1}^{f}$ is eventually constant $1$.
        \item \ref{aSeqEventuallyConstant} implies \ref{OrbitalLowerDirected}:\\
        Similar to the upper bound case: suppose $\mathbf{b}_{\geq 1}^{f} \paren{x,x}$ is eventually constant $1$, and let $x',x'' \in \orb{f}{x}$. Take $j_{1}, j_{2} \in \Z$ such that $f^{j_{1}} \paren{x} \leq x'$ and $f^{j_{2}} \paren{x} \leq x''$. Then there is $k$ large enough so that $f^{-k} \paren{x} \leq f^{j_{1}} \paren{x} \leq x'$ and $f^{-k} \paren{x} \leq f^{j_{2}} \paren{x} \leq x''$.
        \item \ref{aSeqEventuallyConstant} implies \ref{bxyEventuallyConst}:
        \begin{itemize}
            \item Case I: $\orb{f}{x} <_{f}^{s} \orb{f}{y}$\\
            Then $\mathbf{b}^{f} \paren{x,y}$ is constant $1$ on both sides.
            \item Case II: $\orb{f}{x} \nleq_{f}^{w} \orb{f}{y}$\\
            Then $\mathbf{b}^{f} \paren{x,y}$ is constant $0$ on both sides.
            \item Case III: $\orb{f}{x} \leq_{f}^{w} \orb{f}{y}$ and $\orb{f}{x} \nless_{f}^{s} \orb{f}{y}$\\
            By \ref{aSeqEventuallyConstant}, take $N$ large enough so that $i \geq N$ implies $b_{i}^{f} \paren{x,x}$ holds. By $\orb{f}{x} \leq_{f}^{w} \orb{f}{y}$, take $j \in \Z$ such that $b_{j}^{f} \paren{x,y}$ holds. By a previous remark, for all $i \geq N$ we have $b_{i}^{f} \paren{x,x} \wedge b_{j}^{f} \paren{x,y} \implies b_{i+j}^{f} \paren{x,y}$. That is, $b_{i}^{f} \paren{x,y}$ holds for all $i \geq N+j$.
        
            By $\orb{f}{x} \nless_{f}^{s} \orb{f}{y}$, take $j' \in \Z$ such that $\neg b_{j'}^{f} \paren{x,y}$. By the above, we have $b_{i}^{f} \paren{x,x} \wedge b_{j'-i}^{f} \paren{x,y} \implies b_{j'}^{f} \paren{x,y}$, and so by the contrapositive, $\neg b_{j'}^{f} \paren{x,y} \implies \neg b_{j'-i}^{f} \paren{x,y}$ for all $i \geq N$. That is, $b_{i}^{f} \paren{x,y}$ fails for all $i \leq j' - N$.
        \end{itemize}
    
        (The case for $\parity{x,f} = -1$ follows by the same argument.)
        \item \ref{bxyEventuallyConst} implies \ref{aSeqEventuallyConstant}:\\
        Immediate.
        \item \ref{aSeqEventuallyConstant} iff \ref{byxEventuallyConst}:\\
        Similar to the proof of \ref{aSeqEventuallyConstant} iff \ref{bxyEventuallyConst}.
    \end{itemize}
\end{proof}

\begin{rmk}
    We provide some illustrations which may assist with the intuition. Suppose $f \in \Aut{P}$ and $x \in P$ with $\parity{x,f} = +1$ and $\sp{x,f} = 4$. Since $f$ is an automorphism, this implies $f^{k} \paren{x} < f^{k+4} \paren{x}$ for every $k \in \Z$. Thus, this partitions the orbit $f^{\Z} \paren{x}$ into $4$ ``rails'', as shown in \Cref{fig:FourRails}.
    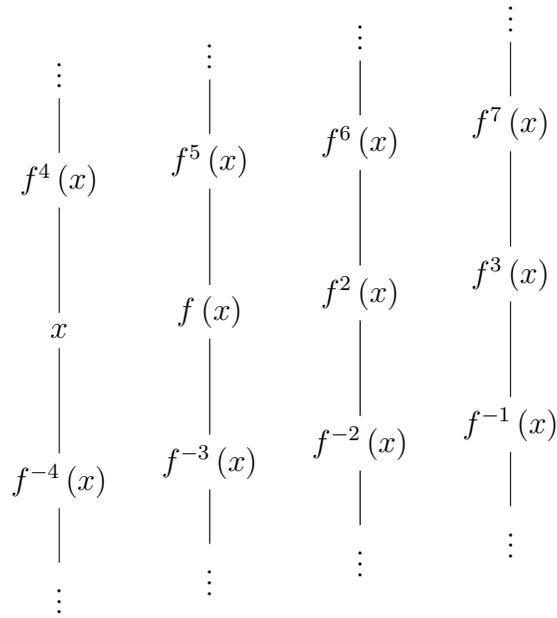
\begin{figure}[h!]
        \centering
        \begin{tikzpicture}
            \node (x) at (0,0) {$x$};
            \node (fx) at (2,0.25) {$f \paren{x}$};
            \node (f2x) at (4,0.5) {$f^{2} \paren{x}$};
            \node (f3x) at (6,0.75) {$f^{3} \paren{x}$};
            \node (f4x) at (0,2) {$f^{4} \paren{x}$};
            \node (f5x) at (2,2.25) {$f^{5} \paren{x}$};
            \node (f6x) at (4,2.5) {$f^{6} \paren{x}$};
            \node (f7x) at (6,2.75) {$f^{7} \paren{x}$};
            \node (f8x) at (0,3.5) {$\vdots$};
            \node (f9x) at (2,3.75) {$\vdots$};
            \node (f10x) at (4,4) {$\vdots$};
            \node (f11x) at (6,4.25) {$\vdots$};
            \node (f-1x) at (6,-1.25) {$f^{-1} \paren{x}$};
            \node (f-2x) at (4,-1.5) {$f^{-2} \paren{x}$};
            \node (f-3x) at (2,-1.75) {$f^{-3} \paren{x}$};
            \node (f-4x) at (0,-2) {$f^{-4} \paren{x}$};
            \node (f-5x) at (6,-2.75) {$\vdots$};
            \node (f-6x) at (4,-3) {$\vdots$};
            \node (f-7x) at (2,-3.25) {$\vdots$};
            \node (f-8x) at (0,-3.5) {$\vdots$};
            \draw (f-8x) -- (f-4x) -- (x) -- (f4x) -- (f8x);
            \draw (f-7x) -- (f-3x) -- (fx) -- (f5x) -- (f9x);
            \draw (f-6x) -- (f-2x) -- (f2x) -- (f6x) -- (f10x);
            \draw (f-5x) -- (f-1x) -- (f3x) -- (f7x) -- (f11x);
        \end{tikzpicture}
        \caption{Four ``rails''.}
        \label{fig:FourRails}
    \end{figure}
    Of course, it may well be the case that $x$ is related to other points of its orbit also, not just the points $f^{4k} \paren{x}$. For the sake of example, suppose also that $x < f^{5} \paren{x}$. Since $f$ is an automorphism, this also implies relations such as $f \paren{x} < f^{6} \paren{x}$, $f^{-3} \paren{x} < f^{2} \paren{x}$, and so on. We add these relations in dotted red in \Cref{fig:FourRailsWithHops}.
    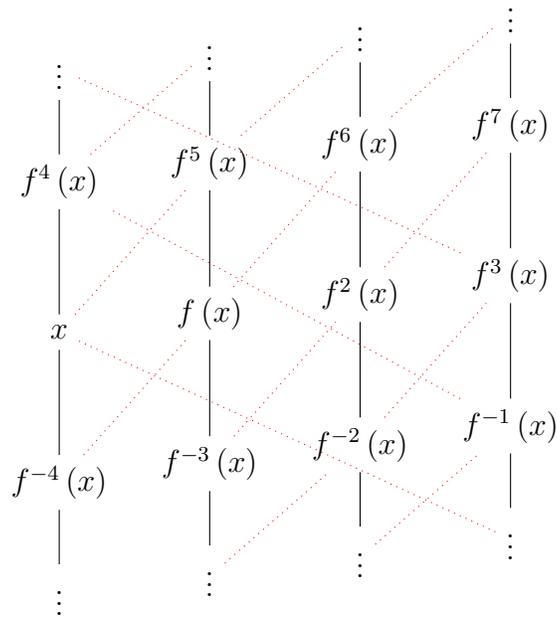
\begin{figure}[h!]
        \centering
        \begin{tikzpicture}
            \node (x) at (0,0) {$x$};
            \node (fx) at (2,0.25) {$f \paren{x}$};
            \node (f2x) at (4,0.5) {$f^{2} \paren{x}$};
            \node (f3x) at (6,0.75) {$f^{3} \paren{x}$};
            \node (f4x) at (0,2) {$f^{4} \paren{x}$};
            \node (f5x) at (2,2.25) {$f^{5} \paren{x}$};
            \node (f6x) at (4,2.5) {$f^{6} \paren{x}$};
            \node (f7x) at (6,2.75) {$f^{7} \paren{x}$};
            \node (f8x) at (0,3.5) {$\vdots$};
            \node (f9x) at (2,3.75) {$\vdots$};
            \node (f10x) at (4,4) {$\vdots$};
            \node (f11x) at (6,4.25) {$\vdots$};
            \node (f-1x) at (6,-1.25) {$f^{-1} \paren{x}$};
            \node (f-2x) at (4,-1.5) {$f^{-2} \paren{x}$};
            \node (f-3x) at (2,-1.75) {$f^{-3} \paren{x}$};
            \node (f-4x) at (0,-2) {$f^{-4} \paren{x}$};
            \node (f-5x) at (6,-2.75) {$\vdots$};
            \node (f-6x) at (4,-3) {$\vdots$};
            \node (f-7x) at (2,-3.25) {$\vdots$};
            \node (f-8x) at (0,-3.5) {$\vdots$};
            \draw (f-8x) -- (f-4x) -- (x) -- (f4x) -- (f8x);
            \draw (f-7x) -- (f-3x) -- (fx) -- (f5x) -- (f9x);
            \draw (f-6x) -- (f-2x) -- (f2x) -- (f6x) -- (f10x);
            \draw (f-5x) -- (f-1x) -- (f3x) -- (f7x) -- (f11x);
            \draw[red, dotted] (f-4x) -- (fx) -- (f6x) -- (f11x);
            \draw[red, dotted] (f-8x) (f-3x) -- (f2x) -- (f7x);
            \draw[red, dotted] (f-7x) -- (f-2x) -- (f3x) -- (f8x);
            \draw[red, dotted] (f-6x) -- (f-1x) -- (f4x) -- (f9x);
            \draw[red, dotted] (f-5x) -- (x) -- (f5x) -- (f10x);
        \end{tikzpicture}
        \caption{$b_{5}^{f} \paren{x,x}$ in dotted red.}
        \label{fig:FourRailsWithHops}
    \end{figure}
    These extra relations allow one to ``hop'' between rails. In this terminology, $x$ satisfies \Cref{OrbitalIsDirectedEquivs} iff one can hop from $x$ to any other rail. This holds in the example in \Cref{fig:FourRailsWithHops}, but it would fail if we instead chose $x < f^{6} \paren{x}$, since then $x$ could only hop to even-numbered rails.
\end{rmk}

\begin{lem}[Forcing $<^{s}$ by fixed points]\label{FixedPointBetweenForcesStronglyLess}
    Let $f \in \Aut{P}$, and let $x,y,c \in P$ such that $f \paren{c} = c$ and $x < c < y$. Then $\orb{f}{x} <_{f}^{s} \orb{f}{y}$.
\end{lem}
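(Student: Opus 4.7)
The plan is to reduce the claim to a statement about single points of the orbits by invoking \Cref{OrbitalOrderWeakerCondition}(a). There, condition (ii) tells us that $\orb{f}{x} <_{f}^{s} \orb{f}{y}$ is equivalent to the pointwise statement $f^{n}(x) < y$ for every $n \in \Z$. So it suffices to establish this inequality for every integer $n$.

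The key observation is that the hypothesis $f(c) = c$ immediately yields $f^{n}(c) = c$ for every $n \in \Z$, by a trivial two-sided induction. Since $f$ is an order automorphism, applying $f^{n}$ to the chain $x < c$ preserves the order, giving $f^{n}(x) < f^{n}(c) = c$. Combining with $c < y$ by transitivity yields $f^{n}(x) < y$, which is exactly what was needed.

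There is no real obstacle here --- the only thing to be a little careful about is making sure the appeal to \Cref{OrbitalOrderWeakerCondition}(a) is clean (one could equally well verify the definition of $<_{f}^{s}$ directly by noting that if $x' \sim_{f} x$ then $x' \leq f^{j}(x)$ for some $j$, so $x' \leq f^{j}(x) < f^{j}(c) = c$, and symmetrically $c < y'$ for $y' \sim_{f} y$). Either route is a one-line computation, so the lemma is essentially a formal consequence of $f(c) = c$ combined with the order-preservation of $f$.
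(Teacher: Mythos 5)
Your proof is correct and is essentially the paper's argument: both hinge on $f^{n}(c)=c$ for all $n\in\Z$ together with the criterion in \Cref{OrbitalOrderWeakerCondition}(a). The only cosmetic difference is that the paper applies that criterion twice to get $\orb{f}{x} <_{f}^{s} \orb{f}{c} <_{f}^{s} \orb{f}{y}$ and concludes by transitivity, whereas you verify condition (ii) for the pair $(x,y)$ directly.
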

\begin{proof}
    Note $x < c = f^{n} \paren{c}$ for all $n \in \Z$, so $\orb{f}{x} <_{f}^{s} \orb{f}{c}$ by \Cref{OrbitalOrderWeakerCondition}. Similarly, $f^{n} \paren{c} = c < y$ for all $n \in \Z$, so $\orb{f}{c} <_{f}^{s} \orb{f}{y}$. Then $\orb{f}{x} <_{f}^{s} \orb{f}{y}$ by transitivity.
\end{proof}

\begin{lem}[Preventing $\leq^{w}$ by fixed points]\label{NLemma}
    Let $f \in \Aut{P}$, and let $x,y \in P$ such that $x \nleq y$. Suppose one of the following holds:
    \begin{enumerate}[label=(\roman*)]
        \item There is $c \in P$ with $c < x$, $c \perp y$, and $f \paren{c} = c$.
        \item There is $d \in P$ with $y < d$, $d \perp x$, and $f \paren{d} = d$.
    \end{enumerate}
    (See \Cref{fig:NExample}.) Then $\orb{f}{x} \nleq_{f}^{w} \orb{f}{y}$.
\end{lem}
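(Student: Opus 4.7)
The plan is to invoke the equivalence in \Cref{OrbitalOrderWeakerCondition}(b), which reduces the desired conclusion $\orb{f}{x} \nleq_{f}^{w} \orb{f}{y}$ to showing that $x \nleq f^{n} \paren{y}$ for every $n \in \Z$. So I would suppose toward a contradiction that $x \leq f^{n} \paren{y}$ for some $n \in \Z$, and then derive a contradiction in each of the two cases by exploiting the fact that a fixed point of $f$ is fixed by every integer power of $f$ (and that $f^n$ is order-preserving).

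In case (i), I would chain $c < x$ with the hypothetical $x \leq f^{n} \paren{y}$ to obtain $c < f^{n} \paren{y}$. Applying the order-preserving map $f^{-n}$ and using $f^{-n} \paren{c} = c$, this gives $c < y$, directly contradicting $c \perp y$.

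In case (ii), I would start from the hypothetical $x \leq f^{n} \paren{y}$ together with $y < d$. Applying $f^{n}$ to the strict inequality $y < d$ gives $f^{n} \paren{y} < f^{n} \paren{d} = d$, and concatenation yields $x < d$, contradicting $d \perp x$.

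I do not expect any real obstacle: the entire argument is a direct application of \Cref{OrbitalOrderWeakerCondition}(b) combined with the observation that a fixed point is fixed under every power of $f$. In fact, the hypothesis $x \nleq y$ is not strictly necessary for the above argument --- the $n = 0$ instance of the contradiction already recovers it --- but it is natural to record it as part of the setup, since the lemma is used to rule out weak comparability in situations where strict incomparability in $P$ itself is already the point.
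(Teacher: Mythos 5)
Your proposal is correct and is essentially the paper's argument: both reduce to the pointwise characterization in \Cref{OrbitalOrderWeakerCondition}(b) and derive the contradiction from the fact that the fixed point is fixed under all powers of $f$ (the paper phrases this via transitivity of $\leq_{f}^{w}$ through $\orb{f}{c}$, you compute directly with elements, which is the same computation). Your side remark that the hypothesis $x \nleq y$ is already implied by either (i) or (ii) via the $n=0$ case is also accurate.
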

\begin{figure}[h!]
    \centering
    \begin{tikzpicture}
        \node (x) at (0,0) {$x$};
        \node (c) at (0,-2) {$c$};
        \node (y) at (2,-1) {$y$};
        \node (d) at (2,1) {$d$};
        \draw (c) -- (x);
        \draw (y) -- (d);
        \draw[dotted] (x) -- (y);
    \end{tikzpicture}
    \caption{The dotted line indicates that either $y < x$ or $y \perp x$ is allowed.}
    \label{fig:NExample}
\end{figure}
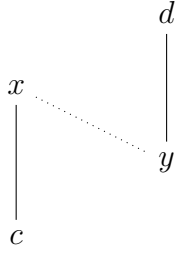
\begin{proof}
    Assume (i) holds. Then $\orb{f}{c} \leq_{f}^{w} \orb{f}{x}$. Suppose for contradiction $\orb{f}{x} \leq_{f}^{w} \orb{f}{y}$. Thus $\orb{f}{c} \leq_{f}^{w} \orb{f}{y}$ by transitivity, so by \Cref{OrbitalOrderWeakerCondition}, there exists $n \in \Z$ such that $f^{n} \paren{c} \leq y$. But $c$ is a fixed point, so $c \leq y$, contradicting that $c \perp y$. Hence $\orb{f}{x} \nleq_{f}^{w} \orb{f}{y}$. The case when (ii) holds is similar.
\end{proof}

The following criterion was used in \cite{KT01} to show by example that points of infinite spiral length cannot be ignored, as they can be forced to occur by finite configurations. We make extensive use of this idea, since forcing these points to occur will be not only unavoidable, but desirable.
\begin{lem}[``M'' configurations]\label{MLemma}
    Let $f \in \Aut{P}$, and let $x,y,z \in P$ such that $x < z$, $x < f \paren{y}$, $f \paren{z} < z$, $y < f \paren{y}$, $x \perp f \paren{z}$, and $x \perp y$. (See \Cref{fig:MExample}.) Then $\sp{x,f} = \infty$, i.e.\ the orbit $f^{\Z} \paren{x}$ is an infinite antichain.
\end{lem}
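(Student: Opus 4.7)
The goal is to show that $x \perp f^{n}(x)$ for every $n \geq 1$, since this alone implies $\sp{x,f} = \infty$, and applying $f^{k}$ to each such incomparability propagates it throughout the orbit, so that the whole of $f^{\Z}(x)$ is an antichain. The key ingredients are the two ``monotone rails'' implicit in the hypotheses: since $y < f(y)$ and $f(z) < z$ and $f$ is an order-automorphism, applying $f^{k}$ gives $f^{k}(y) < f^{k+1}(y)$ and $f^{k+1}(z) < f^{k}(z)$ for every $k \in \Z$, so the $y$-orbit is strictly increasing and the $z$-orbit is strictly decreasing.

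Fix $n \geq 1$ and suppose for contradiction that $x \leq f^{n}(x)$. Applying the automorphism $f^{n}$ to $x < z$ yields $f^{n}(x) < f^{n}(z)$, hence $x < f^{n}(z)$. But since the $z$-orbit is strictly decreasing, $f^{n}(z) \leq f(z)$ for every $n \geq 1$ (with equality only at $n = 1$). Chaining these gives $x < f(z)$, contradicting $x \perp f(z)$. Thus $x \not\leq f^{n}(x)$, in particular ruling out both $x < f^{n}(x)$ and $x = f^{n}(x)$.

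Now suppose for contradiction that $x > f^{n}(x)$, equivalently $f^{-n}(x) > x$. Applying $f^{-n}$ to the hypothesis $x < f(y)$ yields $f^{-n}(x) < f^{1-n}(y)$. Since the $y$-orbit is strictly increasing and $1 - n \leq 0$, we have $f^{1-n}(y) \leq y$, so $f^{-n}(x) < y$. Combined with $x < f^{-n}(x)$, this gives $x < y$, contradicting $x \perp y$.

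The main work is simply orienting the inequalities correctly; no genuine obstacle arises, as the two contradictions essentially come for free from the two monotone orbits, which act as the two ``peaks'' of the M configuration: the increasing $y$-orbit blocks $x$ from being pulled below itself in the orbit, and the decreasing $z$-orbit blocks $x$ from being pushed above itself.
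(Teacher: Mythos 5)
Your proposal is correct and follows essentially the same route as the paper: one case uses the decreasing $z$-orbit together with $x \perp f(z)$ to rule out $x \leq f^{n}(x)$, and the other uses the increasing $y$-orbit together with $x \perp y$ to rule out $f^{n}(x) \leq x$, with only a cosmetic difference in whether one applies $f^{-n}$ before or after chaining the inequalities. No gaps.
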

\begin{figure}[h!]
    \centering
    \begin{tikzpicture}
        \node (fy) at (-1,1) {$f \paren{y}$};
        \node (y) at (-2,0) {$y$};
        \node (x) at (0,0) {$x$};
        \node (z) at (1,1) {$z$};
        \node (fz) at (2,0) {$f \paren{z}$};
        \draw (y) -- (fy) -- (x) -- (z) -- (fz);
    \end{tikzpicture}
    \caption{An ``M''.}
    \label{fig:MExample}
\end{figure}
\begin{proof}
    Note that $x \parallel f^{n} \paren{x}$ iff $f^{-n} \paren{x} \parallel x$, so it suffices to show $x \perp f^{n} \paren{x}$ for all $n \geq 1$. Indeed, if $x \leq f^{n} \paren{x}$, then $x < z$ implies $f \paren{z} \geq f^{n} \paren{z} > f^{n} \paren{x} \geq x$, contradicting $x \perp f \paren{z}$. If instead $f^{n} \paren{x} \leq x$, then $x < f \paren{y}$ implies $f^{n} \paren{x} < f \paren{y} \leq f^{n} \paren{y}$, so $x < y$, contradicting $x \perp y$.
\end{proof}

The following technique was also introduced by \cite{KT01}, as a way to control the order relationship between orbits when at least one has infinite spiral length. The idea is to add points of finite spiral length above and below a point with infinite spiral length (``tightening'' around it, hence the name we will choose) to force periodicity of certain $\mathbf{b}$-sequences.

Adopting model-theoretic terminology, we let $\operatorname{qftp}_{<} \paren{x,F}$ denote the complete quantifier-free $1$-type of $x$ over the set of parameters $F$. That is, $\operatorname{qftp}_{<} \paren{x,F} = \operatorname{qftp}_{<} \paren{x', F}$ if and only if $x < y \iff x' < y$ and $y < x \iff y < x'$ for all $y \in F$.

\begin{lem}\label{InfiniteAntichainTypes}
    Let $f \in \Aut{P}$, $x \in P$, and $F \subseteq P$ finite such that $F \cap f^{\Z} \paren{x} = \emptyset$. Then there exists $k \in \Z$ such that for infinitely many $n \neq 0$, we have $\operatorname{qftp}_{<} \paren{f^{k} \paren{x},F} = \operatorname{qftp}_{<} \paren{f^{k+n} \paren{x}, F}$. Moreover, we may assume $k$ has whichever sign, and is as large in absolute value, as we want.
\end{lem}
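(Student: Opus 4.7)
The plan is a direct pigeonhole argument on the finitely many possible quantifier-free order types over $F$. Since $F$ is finite and $F \cap f^{\Z}(x) = \emptyset$, the type $\operatorname{qftp}_{<}(z, F)$ of any $z \in f^{\Z}(x)$ is determined by the partition of $F$ into the three sets $\{y \in F : y < z\}$, $\{y \in F : z < y\}$, and $\{y \in F : z \perp y\}$ (equality does not occur, by the disjointness hypothesis). Hence there are at most $3^{|F|}$ possible values for $\operatorname{qftp}_{<}(f^{n}(x), F)$ as $n$ ranges over $\Z$.

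First, the degenerate case: if the $f$-orbit of $x$ is finite with period $N$, then $f^{k+jN}(x) = f^{k}(x)$ for all $j \in \Z$ and every $k \in \Z$. The conclusion then holds trivially with $k$ chosen arbitrarily (in particular of any sign and any magnitude) and $n$ ranging over the nonzero multiples of $N$.

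In the main case, the orbit is infinite, so $n \mapsto f^{n}(x)$ is injective. To obtain the ``moreover'' part, fix $M \geq 0$ and pick a sign $\epsilon \in \{+,-\}$; set
\[
    I_{0} := \{n \in \Z : \epsilon n > M\}.
\]
Then $I_{0}$ is infinite, and the map $n \mapsto \operatorname{qftp}_{<}(f^{n}(x), F)$ restricted to $I_{0}$ takes values in a finite set. By pigeonhole, some type $\tau$ is attained on an infinite subset $I \subseteq I_{0}$. Choose any $k \in I$; then $k$ has sign $\epsilon$ and $|k| > M$, and for every other $m \in I$ we have $\operatorname{qftp}_{<}(f^{m}(x), F) = \tau = \operatorname{qftp}_{<}(f^{k}(x), F)$. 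Setting $n := m - k$ gives infinitely many $n \neq 0$ with the required equality of types.

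There is no real obstacle here beyond recognizing that all the control one needs on $k$ (sign and magnitude) can be absorbed into the initial choice of the infinite index set $I_{0}$ before applying pigeonhole.
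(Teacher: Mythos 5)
Your proposal is correct and is essentially the paper's own argument: a pigeonhole on the finitely many quantifier-free $1$-types over the finite set $F$, restricted to a one-sided (and, in your version, thresholded) index set to get the sign and magnitude requirements on $k$. The finite-orbit case split and the injectivity remark are harmless but unnecessary, since the pigeonhole is applied to indices rather than to points.
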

\begin{proof}
    Essentially by Pigeonhole Principle: since $F$ is finite and the language $\brace{<}$ is finite, there are finitely many complete quantifier-free $1$-types over $F$, so one of them must appear in the list $\brace{\operatorname{qftp}_{<} \paren{f^{k} \paren{x},F} : k \in \Z}$ infinitely often. For the ``moreover'' part, we apply this argument to the sets $\brace{\operatorname{qftp}_{<} \paren{f^{k} \paren{x},F} : k \leq 0}$ and $\brace{\operatorname{qftp}_{<} \paren{f^{k} \paren{x},F} : k \geq 0}$ accordingly.
\end{proof}

\begin{lem}[Tightening spirals]\label{TighteningSpirals}
    Let $f \in \Aut{P}$, $x \in P$, and $F \subseteq P$ finite such that $f^{\Z} \paren{x} \cap F = \emptyset$. Let $n \geq 1$ (respectively, $n \leq -1$) such that $\operatorname{qftp}_{<} \paren{x, F} = \operatorname{qftp}_{<} \paren{f^{n} \paren{x}, F}$. Suppose there exist $\alpha,\beta \in P$ such that:
    \begin{itemize}
        \item $\parity{\alpha,f} = +1$ (respectively $-1$) and $\parity{\beta,f} = -1$ (respectively $+1$);
        \item $\sp{\alpha,f} = \sp{\beta,f} = n$;
        \item $\alpha < x < \beta$;
        \item For each $0 \leq i \leq n$ (respectively $n \leq i \leq 0$), $\operatorname{qftp} \paren{f^{i} \paren{\alpha}, F} = \operatorname{qftp} \paren{f^{i} \paren{\beta}, F} = \operatorname{qftp} \paren{f^{i} \paren{x}, F}$.
    \end{itemize}
    (See \Cref{fig:FlagExample}.) Then the sequences $\mathbf{b}_{\geq 0}^{f} \paren{y,x}$ and $\mathbf{b}_{\leq 0}^{f} \paren{x,y}$ (respectively $\mathbf{b}_{\leq 0}^{f} \paren{y,x}$ and $\mathbf{b}_{\geq 0}^{f} \paren{x,y}$) are $n$-periodic for all $y \in F$.
\end{lem}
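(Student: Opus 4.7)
The plan is to prove the case $n \geq 1$; the case $n \leq -1$ is entirely symmetric (or may be reduced to the first by replacing $f$ with $f^{-1}$). For brevity, let $T_{i}$ abbreviate the statement $\operatorname{qftp}_{<} \paren{f^{i} \paren{\alpha}, F} = \operatorname{qftp}_{<} \paren{f^{i} \paren{x}, F} = \operatorname{qftp}_{<} \paren{f^{i} \paren{\beta}, F}$, so the hypothesis provides $T_{0}, T_{1}, \ldots, T_{n}$. The combinatorial backbone of the argument is the chain
\begin{equation*}
    f^{i} \paren{\alpha} < f^{i+n} \paren{\alpha} < f^{i+n} \paren{x} < f^{i+n} \paren{\beta} < f^{i} \paren{\beta},
\end{equation*}
valid for every $i \in \Z$ by applying the order-preserving map $f^{i}$ to $\alpha < x < \beta$ together with $\alpha < f^{n} \paren{\alpha}$ (from $\parity{\alpha,f} = +1$ and $\sp{\alpha,f} = n$) and the symmetric $f^{n} \paren{\beta} < \beta$.

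The core step is a bootstrapping claim: $T_{i}$ implies both $\operatorname{qftp}_{<} \paren{f^{i} \paren{x}, F} = \operatorname{qftp}_{<} \paren{f^{i+n} \paren{x}, F}$ and $T_{i+n}$. For each $y \in F$, I would split on how $y$ compares with $f^{i} \paren{x}$; by $T_{i}$, this matches its comparisons with $f^{i} \paren{\alpha}$ and $f^{i} \paren{\beta}$. In the case $y \leq f^{i} \paren{x}$, the chain gives $y \leq f^{i} \paren{\alpha} < f^{i+n} \paren{x}$; conversely, $y \leq f^{i+n} \paren{x} < f^{i} \paren{\beta}$ forces $y \leq f^{i} \paren{x}$ via $T_{i}$. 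The case $y \geq f^{i} \paren{x}$ is symmetric, reading the chain right-to-left. For $y \perp f^{i} \paren{x}$, supposing $y \leq f^{i+n} \paren{x}$ would force $y \leq f^{i} \paren{\beta}$, and hence $y \leq f^{i} \paren{x}$ by $T_{i}$, a contradiction; $y \geq f^{i+n} \paren{x}$ is handled analogously. Running the very same case analysis with $f^{i+n} \paren{\alpha}$ or $f^{i+n} \paren{\beta}$ in place of $f^{i+n} \paren{x}$ (all three lie inside the chain, with the chain's endpoints serving as bounds) then establishes $T_{i+n}$.

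A straightforward induction via the bootstrapping claim, starting from the base cases supplied by the hypothesis, yields $T_{i}$ for every $i \geq 0$. The first conclusion of the claim then delivers $b_{i}^{f} \paren{y,x} = b_{i+n}^{f} \paren{y,x}$ for all $i \geq 0$ and $y \in F$, which is the $n$-periodicity of $\mathbf{b}_{\geq 0}^{f} \paren{y, x}$. The periodicity of $\mathbf{b}_{\leq 0}^{f} \paren{x, y}$ is the ``$y \geq f^{i} \paren{x}$'' direction of the same analysis, since $b_{-j}^{f} \paren{x, y} \iff f^{j} \paren{x} \leq y$ for $j \geq 0$. The main subtlety is recognizing that, although the hypothesis supplies type equality only at indices $0, \ldots, n$, the chain structure persists for all $i \in \Z$, so the bootstrapping can be iterated indefinitely; this, rather than any single case analysis, is the real content of the lemma.
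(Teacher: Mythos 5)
Your proof is correct and rests on the same mechanism as the paper's: sandwiching the orbit of $x$ between the upward spiral of $\alpha$ and the downward spiral of $\beta$, and transferring order information over $F$ through the hypothesized type equalities (strictness issues being harmless since $F \cap f^{\Z}(x) = \emptyset$). The only difference is bookkeeping: the paper dispenses with your induction by iterating the spiral inequalities directly, using $f^{i}(\alpha) < f^{i+kn}(\alpha)$ and $f^{i+kn}(\beta) < f^{i}(\beta)$ for all $k \geq 0$, so the type hypothesis is only ever invoked at the base indices $0 \leq i < n$ and no propagation of your $T_{i}$ to larger indices is needed.
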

\begin{figure}[h!]
    \centering
    \begin{tikzpicture}
        \node (x) at (0,0) {$x$};
        \node (fx) at (2,0) {$f \paren{x}$};
        \node (f2x) at (4,0) {$f^{2} \paren{x}$};
        \node (f3x) at (6,0) {$f^{3} \paren{x}$};
        \node (f4x) at (8,0) {$f^{4} \paren{x}$};
        \node (a) at (0,-3) {$\alpha$};
        \node (fa) at (2,-2.8) {$f \paren{\alpha}$};
        \node (f2a) at (4,-2.6) {$f^{2} \paren{\alpha}$};
        \node (f3a) at (6,-2.4) {$f^{3} \paren{\alpha}$};
        \node (f4a) at (8,-1.2) {$f^{4} \paren{\alpha}$};
        \node (b) at (0,3) {$\beta$};
        \node (fb) at (2,2.8) {$f \paren{\beta}$};
        \node (f2b) at (4,2.6) {$f^{2} \paren{\beta}$};
        \node (f3b) at (6,2.4) {$f^{3} \paren{\beta}$};
        \node (f4b) at (8,1.2) {$f^{4} \paren{\beta}$};
        \node (y) at (4.5,-5) {$y$};
        \draw (y) -- (a) -- (x) -- (b);
        \draw (f4b) -- (f4x) -- (f4a);
        \draw (b) to[out=-20,in=180] (f4b);
        \draw (a) to[out=20,in=180] (f4a);
        \draw (y) -- (fa) -- (fx) -- (fb);
        \draw (f2a) -- (f2x) -- (f2b);
        \draw (f3a) -- (f3x) -- (f3b);
        \draw (y) -- (f4a);
    \end{tikzpicture}
    \caption{A pair of tightening spirals with $n = 4$, $y < x$, $y < f \paren{x}$, and $y < f^{4} \paren{x}$.}
    \label{fig:FlagExample}
\end{figure}
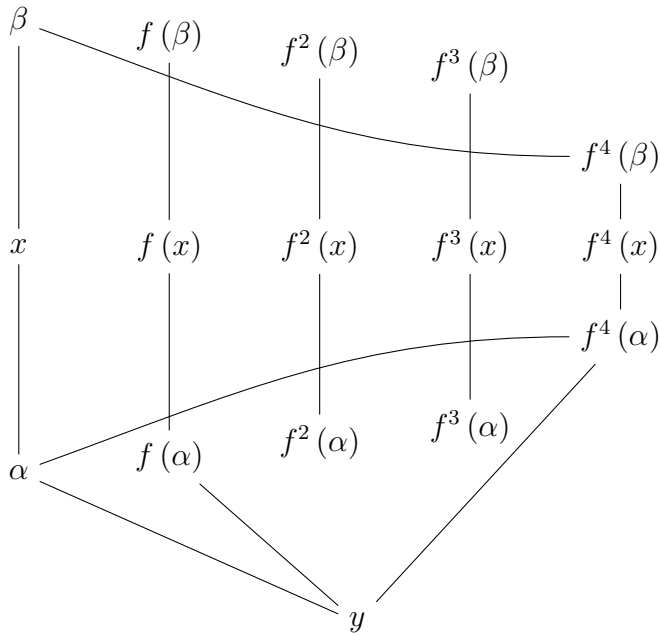
\begin{proof}
    We leave the ``respective'' cases as an exercise, as they are similar. For each $0 \leq i < n$ and $k \geq 0$, we have:
    \begin{align*}
        b_{i}^{f} \paren{y,x} &\implies y < f^{i} \paren{x}\\
        &\implies y < f^{i} \paren{\alpha} < f^{i + kn} \paren{\alpha} < f^{i + kn} \paren{x}\\
        &\implies b_{i+kn}^{f} \paren{y,x}\\
        &\implies y < f^{i+kn} \paren{x} < f^{i+kn} \paren{\beta} < f^{i} \paren{\beta}\\
        &\implies y < f^{i} \paren{x}\\
        &\implies b_{i}^{f} \paren{y,x}
    \end{align*}
    Hence $b_{i}^{f} \paren{y,x} \iff b_{i+kn}^{f} \paren{y,x}$ for all $k \geq 0$; thus, $\mathbf{b}_{\geq 0}^{f} \paren{y,x}$ is $n$-periodic. Similarly:
    \begin{align*}
        b_{-i}^{f} \paren{x,y} &\implies y > f^{i} \paren{x}\\
        &\implies y > f^{i} \paren{\beta} > f^{i + kn} \paren{\beta} > f^{i+kn} \paren{x}\\
        &\implies b_{-i-kn}^{f} \paren{x,y}\\
        &\implies y > f^{i+kn} \paren{x} > f^{i+kn} \paren{\alpha} > f^{i} \paren{\alpha}\\
        &\implies y > f^{i} \paren{x}\\
        &\implies b_{-i}^{f} \paren{x,y}
    \end{align*}
    Hence $b_{-i}^{f} \paren{x,y} \iff b_{-i-kn}^{f} \paren{x,y}$ for all $k \geq 0$; thus, $\mathbf{b}_{\leq 0}^{f} \paren{x,y}$ is $n$-periodic.
\end{proof}

\section{The random poset}\label{sec:TheRandomPoset}

\subsection{Generic automorphisms of \Fraisse{} limits}

We begin by reviewing the needed background. Let $\mathfrak{K}$ be a \Fraisse{} class of finite structures in a fixed relational language. Then the \Fraisse{} limit is the unique (up to isomorphism) countable structure $\mathbf{K}$ satisfying:
\begin{itemize}
    \item \textit{Universality}: Up to isomorphism, $\mathfrak{K}$ is precisely the class of finite substructures of $\mathbf{K}$;
    \item \textit{Ultrahomogeneity}\footnote{This notion differs from the usual notion of homogeneity in model theory, since here we do not assume elementarity for the partial maps.}: Every finite partial automorphism $p \from \mathbf{K} \rightharpoonup \mathbf{K}$ extends to a full automorphism $f \in \Aut{\mathbf{K}}$. In other words, $\brack{p}$ is non-empty for all such $p$.
\end{itemize}

We equip the group $\Aut{\mathbf{K}}$ with the pointwise-convergence topology, whose basic open sets are of the form $\brack{p}$ for $p \in \mathcal{K}$. Since $\mathbf{K}$ is countable, $\Aut{\mathbf{K}}$ is a Polish group.

We describe in more detail the results on existence of generics alluded to in the introduction. The relevant properties for us will be the following:
\begin{defn}
    Let $\mathbf{K}$ be the limit of a \Fraisse{} class $\mathfrak{K}$, and let $\mathcal{K}$ denote the set of finite partial automorphisms of $\mathbf{K}$.
    \begin{itemize}
        \item Let $p,q \in \mathcal{K}$. An \textit{embedding} of $p$ into $q$ is an element $r \in \mathcal{K}$ such that $\dom{r} = \dur{p}$ and $r \circ p \circ r^{-1} \subseteq q$.
        \item A subset $\mathcal{L} \subseteq \mathcal{K}$ has the \textit{joint embedding property} (JEP) if for any $p_{1}, p_{2} \in \mathcal{L}$, there exist $q \in \mathcal{L}$ and embeddings $r_{1}, r_{2} \in \mathcal{K}$ of $p_{1}, p_{2}$ respectively into $q$.
        \item A subset $\mathcal{L} \subseteq \mathcal{K}$ has the \textit{amalgamation property} (AP) if for any $p, p_{1}, p_{2} \in \mathcal{L}$ and any embeddings $r_{1}, r_{2} \in \mathcal{K}$ of $p$ into $p_{1}, p_{2}$ respectively, there exists $q \in \mathcal{L}$ and embeddings $s_{1}, s_{2} \in \mathcal{K}$ of $p_{1}, p_{2}$ respectively into $q$ such that $s_{1} \circ r_{1} = s_{2} \circ r_{2}$.
        \item A subset $\mathcal{L} \subseteq \mathcal{K}$ is \textit{closed under conjugacy} if for all $p \in \mathcal{L}$ and $g \in \Aut{\mathbf{P}}$, we have $g \circ p \circ g^{-1} \in \mathcal{L}$.
        \item We say $\mathcal{K}$ has the \textit{cofinal amalgamation property} (CAP) if it admits a subset $\mathcal{L}$ which is closed under conjugacy, cofinal with respect to $\subseteq$, and has the amalgamation property.
    \end{itemize}
\end{defn}

\begin{thm}[Truss, \cite{Tru92}]\label{CAPJEPImpliesGenericsExist}
    Let $\mathbf{K}$ be a \Fraisse{} limit for which $\mathcal{K}$ has the CAP and the JEP\footnote{JEP is equivalent to amalgamation over the empty map, so if your witness to the CAP includes the empty map, this assumption is redundant.}. Then $\Aut{\mathbf{K}}$ admits generic elements.
\end{thm}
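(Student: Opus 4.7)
The plan is to construct a comeagre $G_{\delta}$ set $W \subseteq \Aut{\mathbf{K}}$ that is contained in a single conjugacy class. Two ingredients are needed, corresponding to the two hypotheses: JEP, which yields comeagreness, and AP in $\mathcal{L}$ (the witness to CAP), which makes a back-and-forth conjugating any two members of $W$ close up.

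For comeagreness, intersect a countable family of open dense conjugation-invariant sets. Let $\mathcal{L}$ witness CAP; since $\mathbf{K}$ is countable, so are $\mathcal{K}$ and $\mathcal{L}$. For each $p \in \mathcal{L}$, the set $U_{p} := \bigcup_{g \in \Aut{\mathbf{K}}} \brack{g p g^{-1}}$ is open and conjugation-invariant. To see $U_{p}$ is dense, fix a basic open $\brack{q}$ with $q \in \mathcal{K}$, apply JEP to obtain $r \in \mathcal{K}$ and embeddings $r_{1} : p \hookrightarrow r$, $r_{2} : q \hookrightarrow r$, extend the partial automorphism $r_{2}^{-1}$ to $h \in \Aut{\mathbf{K}}$ by ultrahomogeneity, and observe that $h r h^{-1}$ simultaneously extends $q$ and contains a conjugate of $p$, so any $\Aut{\mathbf{K}}$-extension lies in $\brack{q} \cap U_{p}$. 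In parallel, for each triple $(p, \bar{p}, \iota)$ with $p, \bar{p} \in \mathcal{L}$, $p \subseteq \bar{p}$, and $\iota \in \mathcal{K}$ an embedding of $p$ into $\mathbf{K}$, consider the ``extension'' set $V_{p, \bar{p}, \iota}$ consisting of those $f$ such that, either $\iota p \iota^{-1} \not\subseteq f$, or $\iota$ extends to an embedding $\bar{\iota}$ of $\bar{p}$ into $\mathbf{K}$ with $\bar{\iota} \bar{p} \bar{\iota}^{-1} \subseteq f$. Each such set is open, and density is shown by an analogous amalgamate-then-transfer pattern: amalgamate (via AP in $\mathcal{L}$) an $\mathcal{L}$-enlargement of the target with $\bar{p}$ over $p$, then transfer into $\mathbf{K}$ via ultrahomogeneity. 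Conjugation-closure of $\mathcal{L}$ is used throughout to keep amalgams inside $\mathcal{L}$. Intersecting all $U_{p}$ and $V_{p, \bar{p}, \iota}$ yields the desired comeagre $G_{\delta}$ set $W$.

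To conjugate any $f, f' \in W$, build $h = \bigcup_{n} h_{n} \in \Aut{\mathbf{K}}$ satisfying $h f h^{-1} = f'$ by back-and-forth along enumerations of $\mathbf{K}$. Maintain the invariant that $h_{n} \in \mathcal{K}$ is an isomorphism of $p_{n} \in \mathcal{L}$ with $p_{n} \subseteq f$ onto $p_{n}' := h_{n} p_{n} h_{n}^{-1} \in \mathcal{L}$ with $p_{n}' \subseteq f'$. A forward step to include a new point in $\dom{h_{n+1}}$ proceeds by: enlarging $p_{n}$ to some $q \in \mathcal{K}$ with $q \subseteq f$ covering the new point; enlarging further to $\tilde{q} \in \mathcal{L}$ by cofinality; using the $V$-property of $f$ to realize $\tilde{q}$ inside $f$ extending the existing $p_{n}$, yielding $p_{n+1} \in \mathcal{L}$ with $p_{n} \subseteq p_{n+1} \subseteq f$; amalgamating $p_{n+1}$ with $p_{n}'$ over $h_{n}$ via AP in $\mathcal{L}$ to obtain $s \in \mathcal{L}$ with compatible embeddings; and finally using the $V$-property of $f'$ to realize $s$ inside $f'$ extending the existing $p_{n}'$, yielding $p_{n+1}'$ and the extended partial conjugacy $h_{n+1}$. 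A symmetric back step handles the range.

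The main obstacle is this repeated ``realize inside $f$ (or $f'$) extending the already-fixed subset'' step. Membership in $\bigcap_{p} U_{p}$ alone only guarantees that some conjugate copy of the $\mathcal{L}$-enlargement appears somewhere in $f$ (or $f'$), with no control over how the distinguished inner copy of $p_{n}$ (resp.\ $p_{n}'$) matches the already-fixed subset. Forcing this match is exactly what the $V$-conditions enforce, and density of the $V$-sets is precisely where amalgamation inside $\mathcal{L}$ is indispensable --- this is why the full CAP hypothesis, rather than only JEP, is needed. A further subtlety is ensuring that the specific point one wishes to cover at each stage is actually covered (not just ``some point''); this is handled by absorbing the point via cofinality before applying the $V$-property and by careful bookkeeping in the back-and-forth enumeration.
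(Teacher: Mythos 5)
Your overall architecture (a countable intersection of dense open sets forced to lie in one class, with AP inside $\mathcal{L}$ powering both the density of the ``extension'' sets and the back-and-forth) is the same as Truss's, and your $U_{p}$'s and $V_{p,\bar p,\iota}$'s correspond to the role of JEP and to the sets $E \paren{p,p'}$ in the paper's outline. But there is a genuine gap at the forward step of your back-and-forth, and it is exactly the ``subtlety'' you flag without resolving: your generic set omits the analogue of the sets $D \paren{q}$ for \emph{arbitrary} $q \in \mathcal{K}$, i.e.\ the requirement that every finite restriction of $f$ extends to a restriction of $f$ lying in $\mathcal{L}$. In your step you take $q \subseteq f$ covering the new point $x$, extend $q$ abstractly to $\tilde q \in \mathcal{L}$ by cofinality (this $\tilde q$ need not be contained in $f$), and then invoke $V_{p_{n},\tilde q,\id}$ to realize $\tilde q$ inside $f$ over $\dur{p_{n}}$. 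That realization fixes only $\dur{p_{n}}$ pointwise; the point $x \in \dur{q} \setminus \dur{p_{n}}$ is generally moved to some other point $x'$, so $p_{n+1}$ need not contain $x$ in its dur. Since every later enlargement has the same defect, nothing guarantees that $x$ ever enters $\dom{h}$, so the union of the $h_{n}$'s need not be a (surjective, total) automorphism of $\mathbf{K}$, and the conjugation argument does not close. ``Absorbing the point via cofinality first'' cannot repair this, because the cofinal extension lives outside $f$ and the subsequent realization is only anchored on $\dur{p_{n}}$.

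The fix is the paper's missing ingredient: add to your intersection the sets $D \paren{q}$ for all $q \in \mathcal{K}$, namely $\brace{f : q \subseteq f \implies \exists \tilde q \in \mathcal{L} \paren{q \subseteq \tilde q \subseteq f}}$; these are dense open by cofinality of $\mathcal{L}$ alone (given a basic set $\brack{s}$ with $s \supseteq q$, pass to a cofinal $\tilde s \in \mathcal{L}$ and note $\brack{\tilde s} \subseteq D \paren{q}$). With them, the forward step applies $f \in D \paren{p_{n} \cup f \dhr_{\brace{x}}}$ to obtain $p_{n+1} \in \mathcal{L}$ with $p_{n} \cup \brace{\paren{x, f \paren{x}}} \subseteq p_{n+1} \subseteq f$, so the new point is covered on the nose and no realization on the $f$-side is needed at all; the $V$/$E$-type realization is then invoked only on the $f'$-side (where no particular point must be hit in a forward step, the back step being symmetric), after transporting $p_{n+1}$ across $h_{n}$ --- either via your amalgamation-over-$h_{n}$ step or, more simply, by extending $h_{n}$ to a full automorphism by ultrahomogeneity and using closure of $\mathcal{L}$ under conjugacy. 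Your density arguments for the $U$- and $V$-sets, and the use of AP in $\mathcal{L}$ there, are fine; it is only the omission of the $D$-sets that breaks the proof as written.
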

We remark that this gives sufficient conditions for existence of generics, not a characterization. The condition becomes necessary and sufficient if one replaces the CAP with the so-called \textit{weak amalgamation property}, proven in \cite{KR04} using Hjorth's theory of turbulence.

The proof of \Cref{CAPJEPImpliesGenericsExist} proceeds as follows: let $\mathcal{L} \subseteq \mathcal{K}$ witness the CAP. Then define, for each $p \in \mathcal{K}$:
$$D \paren{p} := \brace{f \in \Aut{\mathbf{K}} : p \subseteq f \implies \exists q \in \mathcal{L} \paren{p \subseteq q \subseteq f}},$$
and for each $p,p' \in \mathcal{L}$ with $p \subseteq p'$:
$$E \paren{p,p'} := \brace{f \in \Aut{\mathbf{K}} : p \subseteq f \implies \exists r \in \mathcal{K} 
\begin{pmatrix}
    \textnormal{$\dom{r} = \dur{p'}$, $r$ fixes $\dur{p}$}\\ 
    \textnormal{pointwise, and $r \circ p' \circ r^{-1} \subseteq f$}
\end{pmatrix}}.$$
Then each $D \paren{p}$ and each $E \paren{p,p'}$ are dense open, and the dense $G_{\delta}$ set $$\Gamma := \bigcap_{p \in \mathcal{K}} D \paren{p} \cap \bigcap_{\substack{p,p' \in \mathcal{L}\\p \subseteq p'}} E \paren{p,p'}$$ is a single conjugacy class.

\subsection{Random poset}

Throughout, we let $\paren{\mathbf{P},\leq}$ denote the \textit{random poset} -- that is, the \Fraisse{} limit of the class of finite partial orders. As in the previous section, $\mathcal{P}$ denotes the collection of finite partial automorphisms of $\mathbf{P}$.

\begin{rmk}
    Let $T$ be the theory of $\mathbf{P}$ in the language of partial orders. Then $T$ is $\omega$-categorical and admits quantifier elimination (see e.g. \cite{Mac11}, Corollary 3.1.3 and Proposition 3.1.6).
\end{rmk}

The following statement is a trivial consequence of universality and ultrahomogeneity of $\mathbf{P}$, but is fundamental to any argument about $\mathbf{P}$. Following \cite{KT01}, we call this the \textit{universal-homogeneity} property of $\mathbf{P}$.

\begin{lem}[Universal-homogeneity]\label{UniversalHomogeneity}
    For any $A \subseteq \mathbf{P}$ finite, and any finite poset $B$ such that $A \subseteq B$, there is an embedding $\phi \from B \hookrightarrow \mathbf{P}$ which is the identity on $A$.
\end{lem}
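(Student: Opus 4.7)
The plan is to combine the two defining properties of $\mathbf{P}$ as a \Fraisse{} limit: universality will produce some embedding of $B$ into $\mathbf{P}$, and ultrahomogeneity will then be used to correct that embedding so that it restricts to the identity on $A$.

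More concretely, I would first apply universality to the finite poset $B$ to obtain an order-embedding $\psi : B \hookrightarrow \mathbf{P}$. Now $\psi$ may move points of $A$ to arbitrary locations in $\mathbf{P}$, so the next step is to manufacture an automorphism of $\mathbf{P}$ that pulls $\psi \brack{A}$ back to $A$ pointwise. For this, consider the partial map $\gamma := \psi^{-1} \dhr \psi \brack{A}$, which sends $\psi \paren{a} \mapsto a$ for each $a \in A$. Its domain $\psi \brack{A}$ and range $A$ are both finite, and since $\psi$ is an order-embedding and $A$ is a substructure of $B$, $\gamma$ is a partial automorphism of $\mathbf{P}$ with finite domain, i.e.\ an element of $\mathcal{P}$. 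By ultrahomogeneity (equivalently, $\brack{\gamma} \neq \emptyset$), $\gamma$ extends to some $f \in \Aut{\mathbf{P}}$.

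Setting $\phi := f \circ \psi$, I get an embedding $B \hookrightarrow \mathbf{P}$ as the composition of an order-embedding and an automorphism, and for each $a \in A$ we have $\phi \paren{a} = f \paren{\psi \paren{a}} = \gamma \paren{\psi \paren{a}} = a$, so $\phi$ is the identity on $A$ as required.

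There is no real obstacle here: the argument is the standard ``push-and-adjust'' trick for \Fraisse{} limits, and the only small thing to verify is that $\gamma$ has finite domain and is a genuine partial isomorphism of the order, which is immediate from finiteness of $A$ and the fact that $\psi$ preserves $\leq$ in both directions on the substructure $A \subseteq B$.
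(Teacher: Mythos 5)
Your proof is correct and is exactly the argument the paper has in mind: the paper omits a proof, noting only that the lemma is a ``trivial consequence of universality and ultrahomogeneity,'' which is precisely the push-and-adjust composition $\phi = f \circ \psi$ you construct. The one point worth verifying --- that $\gamma$ is a genuine finite partial automorphism, which relies on the order on $B$ restricting to the order $A$ inherits from $\mathbf{P}$ --- is handled in your final paragraph, so there is nothing to add.
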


The motivation for our work begins with the theorem that $\Aut{\mathbf{P}}$ admits generics.

\begin{thm}[Kuske--Truss, \cite{KT01}]
    The family of determined partial automorphisms witnesses $\mathbf{P}$ having the CAP, and so $\Aut{\mathbf{P}}$ has generic elements.
\end{thm}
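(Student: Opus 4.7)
The strategy is to show that the family $\mathcal{L} \subseteq \mathcal{P}$ of determined partial automorphisms witnesses the CAP: it is closed under conjugacy, cofinal in $(\mathcal{P}, \subseteq)$, and has the amalgamation property. JEP is then inherited (the empty map lies in $\mathcal{L}$ and any two elements of $\mathcal{L}$ amalgamate over it), so \Cref{CAPJEPImpliesGenericsExist} applies. Closure under conjugacy is routine: for $p \in \mathcal{L}$ and $g \in \Aut{\mathbf{P}}$, the map $f \mapsto gfg^{-1}$ is a bijection between $\brack{p}$ and $\brack{gpg^{-1}}$ that preserves $\mathbf{b}$-sequences via $\mathbf{b}^{gfg^{-1}}(g(x),g(y)) = \mathbf{b}^{f}(x,y)$. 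For amalgamation inside $\mathcal{L}$: given determined $p, p_1, p_2$ with embeddings of $p$ into the $p_i$, pick any $f_i \in \brack{p_i}$ and consider the orbit-closures $f_i^{\Z}\brack{\dur{p_i}}$, which by \Cref{FinitePartialLIsomCanExtend} are $L$-structures whose isomorphism type depends only on $p_i$ by determinedness. Apply the strong amalgamation of posets (\Cref{StrongAmalgamationProperty}) to amalgamate these orbit-closures over the orbit-closure of $p$, and re-embed the amalgam into $\mathbf{P}$ via universal-homogeneity (\Cref{UniversalHomogeneity}). Determinedness persists: for mixed pairs $(x,y)$ with $x$ coming from $p_1$ and $y$ from $p_2$, clause (d) of \Cref{StrongAmalgamationProperty} reduces $b_k^f(x,y)$ to the existence of $a \in \dur{p}$ and $i \in \Z$ with $b_i^{f}(x, a) \wedge b_{k-i}^{f}(a, y)$, each factor being determined on its own side.

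The main obstacle is cofinality: given $p \in \mathcal{P}$, construct a determined $q \supseteq p$. The combinatorial toolkit of \Cref{subsec:UsefulCombinatorialCriteria} is tailor-made for this. For each pair $(x,y) \in \dur{p}^2$, one pins down $\mathbf{b}^f(x,y)$ by adding finitely many points via one of the forcing moves: (i) a fixed point between $x$ and $y$ forces $\orb{f}{x} <_f^s \orb{f}{y}$, i.e.\ $\mathbf{b}^f(x,y) = \mathbf{1}$ (\Cref{FixedPointBetweenForcesStronglyLess}); (ii) a fixed point producing an N-configuration forces $\orb{f}{x} \nleq_f^w \orb{f}{y}$, i.e.\ $\mathbf{b}^f(x,y) = \mathbf{0}$ (\Cref{NLemma}); (iii) when the orbits are to interact nontrivially, tightening spirals around $x$ and $y$ of prescribed spiral length force $\mathbf{b}^f(x,y)$ to be eventually periodic with controlled pattern (\Cref{TighteningSpirals}); (iv) when a point must have infinite spiral length, an M-configuration forces this (\Cref{MLemma}). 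Each such addition is realized inside $\mathbf{P}$ via \Cref{UniversalHomogeneity}.

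The delicate issue is bookkeeping: adding new points can create new undetermined pairs. The resolution is to pick auxiliary points whose own orbits are already determined --- fixed points (trivially determined, with all $\mathbf{b}$-sequences constant $0$ or $1$), or spiral-endpoints with prescribed finite spiral length, whose $\mathbf{b}$-sequences are eventually periodic by \Cref{BDefinableRelations}(c). Since the criteria of \Cref{subsec:UsefulCombinatorialCriteria} are monotone under poset enlargement, previous determinations are preserved. A concrete organization is: first enlarge $p$ so every $x \in \dur{p}$ has a determined spiral length and parity (adding M-configurations for points we wish to have infinite spiral length, and completing finite orbits for the rest); then install, for each pair in the resulting enlarged $\dur{p}^2$, either a fixed point (to force $\mathbf{b} = \mathbf{1}$ or $\mathbf{b} = \mathbf{0}$) or a pair of tightening spirals (to force periodicity with the intended pattern). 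This produces a finite determined $q \supseteq p$ in finitely many steps, completing cofinality. The generic elements then exist by \Cref{CAPJEPImpliesGenericsExist}.
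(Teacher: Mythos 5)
The paper does not actually prove this statement --- it is quoted from Kuske--Truss \cite{KT01} --- so I am judging your argument on its own terms. Your overall architecture (conjugacy-closure is routine; cofinality is the heart, via the forcing lemmas of \cref{subsec:UsefulCombinatorialCriteria}; then amalgamation) is the right shape, but two steps have genuine gaps. First, the amalgamation step: it is simply not true that the (orbit-closure) free amalgam of two determined maps, re-embedded in $\mathbf{P}$, is determined. Take $p = \emptyset$, $p_{1} = \brace{\paren{x,x'}}$ with $x < x'$ and $p_{2} = \brace{\paren{y,y'}}$ with $y < y'$; each is determined (any extension has $b^{f}_{k} \paren{x,x} = 1$ exactly for $k \geq 0$), and the free amalgam places the two chains incomparably. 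But $q = p_{1} \cup p_{2}$ is not determined: one extension of $q$ can keep the two orbits incomparable, while another can send $y'$ to a point lying above $x$ (such a point exists in $\mathbf{P}$, and the enlarged map is still a partial automorphism), giving $b^{f}_{2} \paren{x,y} = 1$. Your appeal to clause (d) of \Cref{StrongAmalgamationProperty} conflates the order of the abstract amalgam $Q$ with constraints on arbitrary $f \in \brack{q}$: the finite map $q$ imposes nothing on mixed pairs beyond the finitely many recorded relations, so ``witnessed through the common part'' is a property of $Q$, not of every extension. The repair is either to extend the amalgamated finite map to a determined one --- but that is an appeal to cofinality, so AP becomes a corollary rather than a parallel verification --- or to explicitly install the forcing configurations (fixed points, ``N''s, spirals) for every mixed pair so that the mixed sequences of $Q$ are genuinely forced. (Also, your amalgam of orbit closures is infinite, while \Cref{UniversalHomogeneity} is stated for finite posets; one must pass to a finite fragment.)

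Second, in the cofinality argument you repeatedly conflate ``$\mathbf{b}^{f} \paren{x,y}$ is eventually periodic for each individual $f$'' (\Cref{BDefinableRelations}(c)) with ``$q$ determines $\paren{x,y}$'', i.e.\ the sequence is the same for \emph{all} $f \in \brack{q}$. Adding an auxiliary point of prescribed finite spiral length does not by itself determine its sequences against old points, nor even determine $\mathbf{b}^{f} \paren{x,x}$ for a nonzero-parity $x$: for that one must force eventual constancy (the directedness condition of \Cref{OrbitalIsDirectedEquivs}, obtained by grafting on a chain of upper bounds as in the proof of property (C) inside \Cref{GenericIsInSigma}) and record enough of the orbit that the finitely many specified values plus the forced tail pin down the whole sequence. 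Likewise, pairs involving a nonzero-parity orbit whose intended sequence is neither $\mathbf{1}$ nor $\mathbf{0}$ are covered by neither your move (i) nor (ii); they need exactly this eventually-constant argument (compare Case III(b) in the proof of \Cref{SigmaIsGeneric}), not a fixed point. In the infinite-antichain case you also need the pigeonhole step (\Cref{InfiniteAntichainTypes}) to find the shift $k$ and period $n$ with matching quantifier-free types before spirals of that length can be inserted with the required types over $\dur{p}$ --- the period cannot simply be ``prescribed''. Finally, the termination bookkeeping is incomplete: the spiral and ``M'' points you add create new pairs that must themselves be determined, and one has to argue that these additions are of controlled type (fixed points are harmless; the added orbits have nonzero parity and can be handled without spawning further infinite-antichain orbits), so the process closes off after finitely many rounds --- this is precisely the delicate organization in Kuske--Truss, and in the paper's own re-use of the machinery in \Cref{SigmaIsGeneric}, and it cannot be waved through with ``monotonicity under enlargement'', which only shows old determinations persist, not that new obligations stop appearing.
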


Our goal is a concrete description of the generic automorphism of the random poset $\mathbf{P}$ --- ideally, one which explicitly mentions finite partial automorphisms as little as possible. This is why we recalled the proof of \Cref{CAPJEPImpliesGenericsExist}: we will identify a list of conditions on $f \in \Aut{\mathbf{P}}$ which are necessary and sufficient for $f$ to lie in the $D \paren{p}$'s and $E \paren{p,p'}$'s defined above.

As an example of how this will be used, we begin with the following relation between orbital orders and determined partial automorphisms.
\begin{lem}\label{OrbitalOrderCapturedByDeterm}
    Let $p \in \mathcal{P}$ be determined, and let $x,y \in \dur{p}$. Then for the generic $f$ which extends $p$:
    \begin{enumerate}[label=(\roman*)]
        \item $\orb{f}{x} <_{f}^{s} \orb{f}{y}$ if and only if $x' < y'$ for every $x' \in \orb{f}{x} \cap \dur{p}$ and $y' \in \orb{f}{y} \cap \dur{p}$;
        \item $\orb{f}{x} \leq_{f}^{w} \orb{f}{y}$ if and only if $x' \leq y'$ for some $x' \in \orb{f}{x} \cap \dur{p}$ and $y' \in \orb{f}{y} \cap \dur{p}$.
    \end{enumerate}
\end{lem}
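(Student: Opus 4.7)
The plan is to dispense with the easy directions first. For the forward direction of (i), $\orb{f}{x} <_{f}^{s} \orb{f}{y}$ by definition forces $x' < y'$ for every $x' \sim_{f} x$ and $y' \sim_{f} y$, so a fortiori for those lying in $\dur{p}$. For the backward direction of (ii), a pair $x', y' \in \dur{p}$ with $x' \sim_{f} x$, $y' \sim_{f} y$, and $x' \leq y'$ directly witnesses $\orb{f}{x} \leq_{f}^{w} \orb{f}{y}$ by definition of the weak order.

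For the nontrivial directions, the central observation is that since $p$ is determined, the sequence $\mathbf{b}^{f}(u,v)$ for $u,v \in \dur{p}$ is fixed across all $f \in \brack{p}$. By \Cref{OrbitalOrderWeakerCondition}, this renders both the LHS conditions and the sets $\orb{f}{x} \cap \dur{p}$ and $\orb{f}{y} \cap \dur{p}$ intrinsic to $p$, so specializing to the generic $f$ is a convenience: it gives access to the density of determined extensions coming from the proof of \Cref{CAPJEPImpliesGenericsExist}, together with the universal-homogeneity of $\mathbf{P}$.

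For (i) $\Leftarrow$: applying the hypothesis to $(x', y') = (x, y)$ gives $x < y$, and applying it to $(x, x)$ rules out $\orb{f}{x} = \orb{f}{y}$. Suppose for contradiction $b_{n}^{f}(x,y) = 0$ for some $n$, i.e., $x \not\leq f^{n}(y)$. If $f^{n}(y) \in \dur{p}$, then the pair $(x, f^{n}(y))$ lies in $(\orb{f}{x} \cap \dur{p}) \times (\orb{f}{y} \cap \dur{p})$ and directly violates the hypothesis. Otherwise, I would first extend $p$ to a determined $q \subseteq f$ with $f^{n}(y) \in \dur{q}$ via genericity, and then use \Cref{UniversalHomogeneity} to build $h \in \Aut{\mathbf{P}}$ fixing $\dur{p}$ pointwise and sending $f^{n}(y)$ to some $z \in \dur{p}$. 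Because $h$ fixes $\dur{p}$, the conjugate $hfh^{-1}$ still extends $p$; by determinedness and the identity $\orb{hfh^{-1}}{y} = h(\orb{f}{y})$, the image $z = h(f^{n}(y))$ lies in $\orb{hfh^{-1}}{y} \cap \dur{p} = \orb{f}{y} \cap \dur{p}$. Since $h$ is order-preserving, $x \not\leq z$, yielding a $\dur{p}$-pair violating the hypothesis. The argument for (ii) $\Rightarrow$ is analogous: start from a witness $(a,b)$ of $\orb{f}{x} \leq_{f}^{w} \orb{f}{y}$ and transfer it into $\dur{p}$ via such an $h$.

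The main obstacle is constructing the automorphism $h$: \Cref{UniversalHomogeneity} produces such an $h$ only if $\dur{p}$ contains an element of the correct quantifier-free type over $\dur{p}$, which is not automatic. Ensuring this requires enlarging $\dur{p}$ via the determined extension $q$ so that the needed type is realized there, and then arguing either that the hypothesis lifts from $\dur{p}$ to $\dur{q}$ (so the target can be taken in $\dur{q}$ rather than $\dur{p}$ itself), or that $\dur{p}$ already contains a representative of the required orbital class and type; the latter may need auxiliary use of tightening spirals (\Cref{TighteningSpirals}) to control the orbital structure finely enough.
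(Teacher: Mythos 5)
Your easy directions are fine, but the argument you give for (i)$\Leftarrow$ and (ii)$\Rightarrow$ has a fatal flaw, and you have in fact flagged the symptom yourself without diagnosing how bad it is. The transfer step cannot be repaired as stated: an automorphism $h \in \Aut{\mathbf{P}}$ that fixes $\dur{p}$ pointwise and sends $f^{n} \paren{y} \notin \dur{p}$ to some $z \in \dur{p}$ does not exist, since then $h \paren{f^{n} \paren{y}} = z = h \paren{z}$ forces $f^{n} \paren{y} = z \in \dur{p}$ by injectivity. The fallback you sketch --- first pass to a determined $q \supseteq p$ containing $f^{n} \paren{y}$ and then land the point in $\dur{q}$ --- only helps if the hypothesis ``$x' < y'$ for all trace points'' lifts from $\dur{p}$ to $\dur{q}$; but that lift is (for generic $f$) a consequence of the very lemma being proved and you give no independent argument for it, so the plan is circular. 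More fundamentally, no purely finitary transfer of this kind can work: the hypothesis is a finite amount of order data inside $\dur{p}$, while the conclusion $\orb{f}{x} <_{f}^{s} \orb{f}{y}$ (equivalently $\mathbf{b}^{f} \paren{x,y} = \mathbf{1}$, $x \neq y$, by \Cref{OrbitalOrderWeakerCondition}) is an infinite family of constraints, and an arbitrary $f \in \brack{p}$ satisfying the finite hypothesis need not satisfy them; genericity must enter through more than ``determined extensions exist''.

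The mechanism the paper uses, and which your proposal never touches, is the $E \paren{p,q}$ part of genericity together with the fixed-point criteria. For (i): the hypothesis guarantees (via \Cref{UniversalHomogeneity}, used in its correct role of adding a \emph{new} point) that one can adjoin a point $a$ whose immediate predecessors are the maximal elements of $\orb{f}{x} \cap \dur{p}$ and whose immediate successors are the minimal elements of $\orb{f}{y} \cap \dur{p}$, without creating new relations inside $\dur{p}$; then $p \cup \brace{\paren{a,a}}$ is a partial automorphism, one takes a determined extension $q$, and $f \in E \paren{p,q}$ yields $r$ fixing $\dur{p}$ pointwise with $r \circ q \circ r^{-1} \subseteq f$, so $r \paren{a}$ is an actual fixed point of $f$ with $x < r \paren{a} < y$; now \Cref{FixedPointBetweenForcesStronglyLess} forces $\orb{f}{x} <_{f}^{s} \orb{f}{y}$. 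For (ii) one argues the contrapositive the same way, placing the new fixed point below the trace of $\orb{f}{x}$ and incomparable to $y$, and invoking \Cref{NLemma}. Your finite hypothesis is exactly what makes the auxiliary point $a$ consistently addable; the fixed point is what converts that finite configuration into the infinite conclusion. Without this (or some equivalent use of $E \paren{p,q}$), the nontrivial directions remain unproved.
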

\begin{proof}
    \begin{enumerate}[label=(\roman*)]
        \item The forward direction is immediate. For the converse, define the poset $A := \dur{p} \sqcup \brace{a}$ by letting the immediate predecessors of $a$ be the maximal elements of $\orb{f}{x} \cap \dur{p}$, and letting the immediate successors of $a$ be the minimal elements of $\orb{f}{y} \cap \dur{p}$. This does not create any new relations between elements of $\dur{p}$ since by hypothesis, $x' < y'$ for all $x',y' \in \dur{p}$ such that $x' \sim_{f} x$ and $y' \sim_{f} y$. By \Cref{UniversalHomogeneity}, there is an embedding $A \hookrightarrow \mathbf{P}$ which is the identity on $\dur{p}$. Identify $a$ with its image under this embedding.
        
        Moreover, $z < a \iff p \paren{z} < a$ and $a < z \iff a < p \paren{z}$ for all $z \in \dom{p}$. Thus, $p \cup \brace{\paren{a,a}} \in \mathcal{P}$, and so admits a determined extension $q$. By $f \in E \paren{p,q}$, there is an $r \in \mathcal{P}$ with $\dom{r} = \dur{q}$, $r$ fixes $\dur{p}$ pointwise, and $r \circ q \circ r^{-1} \subseteq f$. Then $f \paren{r \paren{a}} = \paren{r \circ q \circ r^{-1}} \paren{r \paren{a}} = r \paren{q \paren{a}} = r \paren{a}$. Thus $\orb{f}{r\paren{a}} = \brace{r \paren{a}}$. Moreover, we have $x < a < y \implies x = r \paren{x} < r \paren{a} < r \paren{y} = y$, so $\orb{f}{x} <_{f} \orb{f}{y}$ by \Cref{FixedPointBetweenForcesStronglyLess}.

        \item The reverse direction is immediate. For the converse, suppose $x' \nleq y'$ for all $x' \in \orb{f}{x} \cap \dur{p}$ and $y' \in \orb{f}{y} \cap \dur{p}$. Then define the poset $A := \dur{p} \sqcup \brace{a}$ where $a$ is minimal in $A$, and $a < z$ iff there is $x' \in \orb{f}{x} \cap \dur{p}$ with $x' \leq z$. Embed $A \hookrightarrow \mathbf{P}$ and identify $a$ with its image under this embedding.
        
        Then $a < p \paren{z} \iff a < z$ for all $z \in \dom{p}$; thus, $p \cup \brace{\paren{a,a}} \in \mathcal{P}$, and so admits a determined extension $q$. By $f \in E \paren{p,q}$, there is an $r \in \mathcal{P}$ with $\dom{r} = \dur{q}$, $r$ fixes $\dur{p}$ pointwise, and $r \circ q \circ r^{-1} \subseteq f$. Then $f \paren{r \paren{a}} = r \paren{a}$ is fixed, $r \paren{a} < r \paren{x} = x$, and $a \perp y \iff r \paren{a} \perp r \paren{y} = y$; thus $\orb{f}{x} \nleq_{f}^{w} \orb{f}{y}$ by \Cref{NLemma}.
    \end{enumerate}
\end{proof}

\begin{lem}\label{ConvexSubsetRandomIffDirected}
    Let $A \subseteq \mathbf{P}$ be convex. Then $A \isom \mathbf{P}$ if and only if $A$ is both upper- and lower-directed.
\end{lem}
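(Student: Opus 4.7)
My plan is to prove the two directions separately, using the Fra\"iss\'e-theoretic characterization of $\mathbf{P}$ as the unique countable poset realizing all consistent one-point extensions of its finite substructures.

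For the forward direction, suppose $A \isom \mathbf{P}$. Given $x,y \in \mathbf{P}$, apply \Cref{UniversalHomogeneity} to the three-element poset $\{x,y,z\}$ with $z$ declared greater than both $x$ and $y$; this produces a strict upper bound of $x,y$ inside $\mathbf{P}$. Symmetrically, $\mathbf{P}$ is lower-directed, and these properties transfer across the isomorphism to $A$.

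For the backward direction, suppose $A$ is convex, upper-directed, and lower-directed. I would verify the following extension property in $A$: for any pairwise disjoint finite $X,Y,Z \subseteq A$ satisfying $x < y$ for all $x \in X$, $y \in Y$, $z \not< x$ for all $x \in X$, $z \in Z$, and $y \not< z$ for all $y \in Y$, $z \in Z$, there exists $a \in A \setminus (X \cup Y \cup Z)$ with $X < a < Y$ and $a \perp Z$. Once verified, $A$ is a countable poset realizing all consistent one-point extensions of its finite substructures, so a standard back-and-forth argument yields $A \isom \mathbf{P}$.

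To verify the extension property, use upper- and lower-directedness to choose $\alpha,\beta \in A$ strictly below and strictly above every element of $S := X \cup Y \cup Z$. Form the finite poset $F := \{\alpha,\beta\} \cup S \cup \{a\}$, where $\{\alpha,\beta\} \cup S$ inherits its order from $\mathbf{P}$ and $a$ is a fresh point declared to satisfy $\alpha < a < \beta$, $X < a < Y$, and $a \perp Z$. The main technical step is a routine check that the transitive closure of these relations is a consistent partial order; the stated hypotheses on $Z$ are precisely what is needed to prevent $a$ from being forced into comparability with any $z \in Z$ (e.g.\ via the chain $a < \beta$ together with some $\beta > z$, or $a > x$ together with some $z < x$). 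Applying \Cref{UniversalHomogeneity} then realizes $a$ inside $\mathbf{P}$ with the stated relations and fixing $\{\alpha,\beta\} \cup S$ pointwise; since $\alpha < a < \beta$ with $\alpha,\beta \in A$, convexity of $A$ forces $a \in A$, completing the verification. The main obstacle is just this consistency check; the rest is formal. One mild subtlety is that ``upper-/lower-directed'' must be interpreted so as to supply \emph{strict} bounds for finite subsets --- equivalently, $A$ has no maximum or minimum --- which is automatic in the intended application to orbitals of nonzero parity, where powers of $f$ provide arbitrarily large and small elements.
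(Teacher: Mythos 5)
Your proof is correct, and at its core it runs on the same mechanism as the paper's: sandwich the prospective new point between a lower and an upper bound drawn from $A$ (supplied by directedness), realize it in $\mathbf{P}$ via \Cref{UniversalHomogeneity}, and use convexity to conclude the realized point lies in $A$. You package this differently, though: where the paper does a direct back-and-forth between $\mathbf{P}$ and $A$, splitting into four cases according to whether the new point is maximal and/or minimal over the current finite configuration, you verify once and for all the one-point extension property for $A$ and then quote the standard uniqueness of the countable poset with that property. This buys a uniform, case-free argument (incomparability with $Z$ comes for free because the fresh point is tied only to $\alpha$, $X$, $Y$, $\beta$), at the cost of invoking the extension-property characterization of $\mathbf{P}$, which the paper sidesteps by working with immediate predecessors and successors directly. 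Your closing caveat is genuinely on point rather than a mere scruple: read literally, the lemma needs directedness to supply \emph{strict} bounds (equivalently, $A$ has no maximum or minimum), since otherwise a principal downset $\brace{w \in \mathbf{P} : w \leq m}$ is convex and directed in both senses yet has a greatest element, hence is not isomorphic to $\mathbf{P}$; the paper's own proof silently makes the same assumption (in its Cases II--IV the chosen bounds must lie strictly above/below $\ran{\gamma}$), and in the only application of the lemma --- orbitals of non-zero parity --- strict bounds are automatic, exactly as you observe. One tiny slip in your consistency discussion: the chain ``$a < \beta$ together with $\beta > z$'' forces nothing; the actual dangers are $z \leq x$ (forcing $z < a$) and $y \leq z$ (forcing $a < z$), which are precisely what your hypotheses on $Z$ exclude, so the check goes through as claimed.
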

\begin{proof}
    The forward implication is immediate; for the converse, we employ a back-and-forth argument. ``Back'' is immediate from universal-homogeneity, so we only show ``forth''. Suppose $\gamma \from \mathbf{P} \rightharpoonup A$ is a finite partial isomorphism and let $z \in \mathbf{P} \setminus \dom{\gamma}$. Let $\paren{x_{i}}_{i<n} \subseteq \dom{\gamma}$ be the immediate predecessors of $z$ in the finite poset $\dom{\gamma} \cup \brace{z}$, and let $\paren{y_{j}}_{j<m} \subseteq \dom{\gamma}$ be the immediate successors of $z$.
    \begin{itemize}
        \item Case I: $z$ is neither maximal nor minimal in $\dom{\gamma} \cup \brace{z}$ (that is, both $n \neq 0$ and $m \neq 0$)\\
        By universal homogeneity, let $c \in \mathbf{P}$ such that the $\gamma \paren{x_{i}}$'s are its immediate predecessors and the $\gamma \paren{y_{j}}$'s are its immediate successors. Then $\gamma' := \gamma \cup \brace{\paren{z,c}}$ is a partial isomorphism. By convexity, $\gamma \paren{x_{0}} < c < \gamma \paren{y_{0}}$ implies $c \in A$, and so $\gamma' \from \mathbf{P} \rightharpoonup A$ as desired.
        \item Case II: $z$ is maximal but not minimal in $\dom{\gamma} \cup \brace{z}$\\
        Let $b$ be an upper bound in $A$ for $\ran{\gamma}$. By universal homogeneity, let $c \in \mathbf{P}$ such that the $\gamma \paren{x_{i}}$'s are its immediate predecessors and $b$ is its only immediate successor. Then $\gamma \paren{x_{0}} < c < b$ implies $c \in A$, and so $\gamma' := \gamma \cup \brace{\paren{z,c}}$ is a partial isomorphism $\mathbf{P} \rightharpoonup A$.
        \item Case III: $z$ is minimal but not maximal in $\dom{\gamma} \cup \brace{z}$\\
        Similar to Case II. Let $a \in A$ be a lower bound of $\ran{\gamma}$, and let $c$ be above only $a$ and below all the $\gamma \paren{y_{j}}$'s.
        \item Case IV: $z$ is both maximal and minimal in $\dom{\gamma} \cup \brace{z}$\\
        Let $a$ and $b$ be lower and upper bounds respectively in $A$ of $\ran{\gamma}$, and let $c \in \mathbf{P}$ incomparable to $\ran{\gamma}$ and $a < c < b$. Then $\gamma' := \gamma \cup \brace{\paren{z,c}}$ is as desired.
    \end{itemize}
    This completes ``forth'', so $A \isom \mathbf{P}$ as desired.
\end{proof}

Since orbitals are convex, we immediately have the following.

\begin{cor}
    For $f \in \Aut{\mathbf{P}}$ and $x \in \mathbf{P}$ with $\parity{x,f} \neq 0$, we have $\orb{f}{x} \isom \mathbf{P}$ iff the equivalent conditions of \Cref{OrbitalIsDirectedEquivs} hold.
\end{cor}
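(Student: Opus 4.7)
The plan is to combine the two immediately preceding results, with essentially no new work required. First I would recall the fact, noted just after the definition of orbitals in Section~\ref{subsec:Orbitals}, that orbitals are always convex subsets of the ambient poset. Hence $\orb{f}{x}$ is a convex subset of $\mathbf{P}$, and \Cref{ConvexSubsetRandomIffDirected} applies verbatim to it: $\orb{f}{x} \isom \mathbf{P}$ if and only if $\orb{f}{x}$ is both upper-directed and lower-directed.

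Next I would invoke \Cref{OrbitalIsDirectedEquivs}, whose hypothesis $\parity{x,f} \neq 0$ is exactly the one assumed here. That lemma asserts in particular the equivalence of the upper-directed and lower-directed conditions (conditions (i) and (ii) of its statement), so demanding both is the same as demanding either one. Thus $\orb{f}{x} \isom \mathbf{P}$ iff $\orb{f}{x}$ is upper-directed, which by the full equivalence in \Cref{OrbitalIsDirectedEquivs} is iff any (equivalently, all) of the listed conditions hold.

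There is no real obstacle here: the two lemmas were precisely set up so that this corollary drops out. The only mild thing to check is that I am entitled to quote \Cref{ConvexSubsetRandomIffDirected}, whose proof takes place inside $\mathbf{P}$ itself and uses universal-homogeneity together with the convexity of $A$ to realize each new point in $A$; the orbital $\orb{f}{x}$ satisfies both hypotheses, so the application is immediate.
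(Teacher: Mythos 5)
Your proposal is correct and matches the paper exactly: the paper derives this corollary in one line from the convexity of orbitals, \Cref{ConvexSubsetRandomIffDirected}, and the equivalences in \Cref{OrbitalIsDirectedEquivs}, which is precisely your argument.
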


\subsection{Characterizing generic automorphisms}\label{subsec:CharacterizingGenericAutomorphisms}

Here we give a characterization of the generic conjugacy class $\Gamma$.

\begin{defn}
    Let $\Sigma$ be the set of all $f \in \Aut{\mathbf{P}}$ satisfying the following:
    \begin{enumerate}[label=(\Alph*)]
        \item\label{ClassIsDense} $f$ has dense conjugacy class.
        \item\label{Ultrahomogeneity} The structure $\mathbf{P}_{f}$ is ultrahomogeneous.
        
        \item\label{OrbitalIsRandom} For all $x \in \mathbf{P}$ with $\parity{x,f} \neq 0$, the equivalent conditions of \Cref{OrbitalIsDirectedEquivs} hold. (That is, $\mathbf{b}^{f} \paren{x,y}$ and $\mathbf{b}^{f} \paren{y,x}$ are eventually constant on both sides for all $y \in \mathbf{P}$. By the corollary to \Cref{ConvexSubsetRandomIffDirected}, this is also equivalent to $\orb{f}{x} \isom \mathbf{P}$.)
        \item\label{CanFitTighteningSpirals} For all $x,y \in \mathbf{P}$ and $\sigma \in \brace{-1,+1}$ such that $\sp{x,f} = \infty$ and $y \notin f^{\Z} \paren{x}$, then there exist $k \in \Z$, $n \geq 1$, and $\alpha, \beta, \in \mathbf{P}$ satisfying:
        \begin{itemize}
            \item $\parity{\alpha,f} = -\parity{\beta,f} = \sigma$;
            \item $\sp{\alpha,f} = \sp{\beta,f} = n$;
            \item $\alpha < f^{k} \paren{x} < \beta$;
            \item For each $0 \leq i \leq n$, $f^{\sigma i} \paren{\alpha}$, $f^{\sigma i} \paren{\beta}$, and $f^{k + \sigma i} \paren{x}$ have the same types over $\brace{y}$.
        \end{itemize}
    \end{enumerate}
\end{defn}

As the next lemma shows, density of the conjugacy class of $f$ implies a certain amount of universality for the structure $\mathbf{P}_{f}$:

\begin{lem}[Universality for $\mathbf{P}_{f}$]\label{DenseClassImpliesUniversality}
    Suppose $f \in \Aut{\mathbf{P}}$ has a dense conjugacy class. Then for every $p \in \mathcal{P}$ and every $g \in \brack{p} \cap D \paren{p}$, we have $\paren{\dur{p}, \mathbf{b}^{g}} \hookrightarrow \mathbf{P}_{f}$.
\end{lem}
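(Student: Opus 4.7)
The plan is to unpack the three ingredients given by the hypotheses --- the determined extension guaranteed by $D(p)$, the conjugate supplied by density, and the rigidity of $\mathbf{b}$-sequences forced by determinedness --- and combine them via \Cref{ConjugateIffLIsomorphic}.

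First, since $g \in \brack{p} \cap D\paren{p}$, the definition of $D\paren{p}$ (using $\mathcal{L}$ = the class of determined partial automorphisms, which witnesses the CAP for $\mathbf{P}$) yields a determined $q \in \mathcal{P}$ with $p \subseteq q \subseteq g$. Since $q \in \mathcal{P}$, universal-homogeneity (\Cref{UniversalHomogeneity}) implies $\brack{q}$ is non-empty; as the conjugacy class of $f$ is assumed dense in $\Aut{\mathbf{P}}$, pick $h \in \Aut{\mathbf{P}}$ such that $f' := h f h^{-1}$ lies in $\brack{q}$, i.e.\ $q \subseteq f'$.

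Now $h \circ f = f' \circ h$, so by \Cref{ConjugateIffLIsomorphic} applied with $A = B = \mathbf{P}$, the map $h$ is an isomorphism $\mathbf{P}_{f} \to \mathbf{P}_{f'}$ of $L$-structures; equivalently, $h^{-1} \from \mathbf{P}_{f'} \to \mathbf{P}_{f}$ is an $L$-isomorphism. On the other hand, both $f'$ and $g$ extend the determined partial automorphism $q$, so they are isomorphic over $q$; in particular, $\mathbf{b}^{f'} \paren{x,y} = \mathbf{b}^{g} \paren{x,y}$ for all $x,y \in \dur{q}$, hence for all $x,y \in \dur{p} \subseteq \dur{q}$. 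Therefore the induced $L$-substructures $\paren{\dur{p}, \mathbf{b}^{g}}$ and $\paren{\dur{p}, \mathbf{b}^{f'}}$ of $\mathbf{P}_{g}$ and $\mathbf{P}_{f'}$ respectively coincide as $L$-structures, and the restriction of $h^{-1}$ to $\dur{p}$ gives the desired embedding $\paren{\dur{p}, \mathbf{b}^{g}} \hookrightarrow \mathbf{P}_{f}$.

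There is no genuine obstacle here; the argument is essentially a bookkeeping exercise combining three previously established tools. The only subtlety worth emphasising is the passage from isomorphism \emph{over $q$} to equality of the $\mathbf{b}$-sequences on $\dur{p}$ --- this is exactly the content of the definition of ``determined'' together with the observation that $\dur{p} \subseteq \dur{q}$, which is why invoking $D\paren{p}$ (rather than merely density) is essential to upgrade the conjugate $f'$ from merely ``close to $g$'' to ``agreeing with $g$ on all the relevant $\mathbf{b}$-values''.
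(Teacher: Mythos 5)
Your proposal is correct and follows essentially the same route as the paper: extract a determined $q$ with $p \subseteq q \subseteq g$ from $D\paren{p}$, use density of the conjugacy class to find a conjugate $f'$ of $f$ in the (non-empty, open) set $\brack{q}$, note that determinedness forces $\mathbf{b}^{f'} = \mathbf{b}^{g}$ on $\dur{q} \supseteq \dur{p}$, and transport along the $L$-isomorphism $\mathbf{P}_{f'} \isom \mathbf{P}_{f}$ from \Cref{ConjugateIffLIsomorphic}. (Only a cosmetic point: non-emptiness of $\brack{q}$ comes from ultrahomogeneity of $\mathbf{P}$ rather than from \Cref{UniversalHomogeneity} per se.)
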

\begin{proof}
    Take $q \in \mathcal{P}$ determined with $p \subseteq q \subseteq g$. Since $\brack{q}$ is open in $\Aut{\mathbf{P}}$, choose $f' \in \brack{q}$ which is conjugate to $f$. Then since $q$ is determined and $f',g$ extend $q$, $\paren{\dur{p}, \mathbf{b}^{g}} \subseteq \paren{\dur{q}, \mathbf{b}^{g}} \isom \paren{\dur{q}, \mathbf{b}^{f'}} \subseteq \mathbf{P}_{f'} \isom \mathbf{P}_{f}$.
\end{proof}

Recall that $\brack{p}$ is non-empty open for all $p$ by ultrahomogeneity of $\mathbf{P}$, and $D \paren{p}$ is dense open for all $p$, so the condition on $g$ is not vacuous. Thus, $\mathbf{P}_{f}$ admits a certain amount of universal-homogeneity as an $L$-structure, so we can guarantee certain desirable behavior in $\mathbf{P}_{f}$. We collect some critical properties here, which include converses to \Cref{FixedPointBetweenForcesStronglyLess,NLemma,MLemma}.

\begin{prop}\label{PropertiesOfSigma}
    Suppose $f \in \Sigma$.
    \begin{enumerate}
        \item\label{AllSequencesEventuallyPeriodic} For all $x,y \in \mathbf{P}$, the sequence $\mathbf{b}^{f} \paren{x,y}$ is eventually periodic on both sides.
        \item\label{CanFitMs} (Converse to \Cref{FixedPointBetweenForcesStronglyLess}) If $x \in \mathbf{P}$ with $\sp{x,f} = \infty$, then there exist $y,z \in \mathbf{P}$ such that $y < f \paren{y}$, $f \paren{z} < z$, $x \perp y$, $x < f \paren{y}$, $x < z$, and $x \perp f \paren{z}$.
        \item\label{SigmaDeterminesSingleOrbits} Suppose $p \in \mathcal{P}$ with $p \subseteq f$. Then there is a $p' \in \mathcal{P}$ such that $p \subseteq p' \subseteq f$, and $p'$ determines $\paren{x,x}$ for all $x \in \dur{p}$.
        \item\label{CanSeparateStrongLessWithFixedPts} (Converse to \Cref{NLemma}) If $x,y \in \mathbf{P}$ with $\orb{f}{x} <_{f}^{s} \orb{f}{y}$, then there exists a fixed point $c = f \paren{c}$ with $x < c < y$.
        \item\label{CanSeparateIncomparableWithFixedPts} (Converse to \Cref{MLemma}) If $\orb{f}{x} \nleq_{f}^{w} \orb{f}{y}$, then there are fixed points $c = f \paren{c}$ and $d = f \paren{d}$ with $y < c$ and $x \nless c$, and $d < x$ and $d \nless y$.
    \end{enumerate}
\end{prop}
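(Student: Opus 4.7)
The plan is to prove the five parts using two unified tools: the universality of $\mathbf{P}_{f}$ from \ref{ClassIsDense} via \Cref{DenseClassImpliesUniversality}, and the ultrahomogeneity of $\mathbf{P}_{f}$ from \ref{Ultrahomogeneity}. Together these let me transport finite configurations built in auxiliary structures $\mathbf{P}_{g}$ into $\mathbf{P}_{f}$ and align them with prescribed elements; conditions \ref{OrbitalIsRandom} and \ref{CanFitTighteningSpirals} provide the key inputs for parts \ref{AllSequencesEventuallyPeriodic} and \ref{SigmaDeterminesSingleOrbits}.

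For \ref{AllSequencesEventuallyPeriodic}, I split into cases. If $\parity{x,f} \neq 0$, then \ref{OrbitalIsRandom} with \Cref{BDefinableRelations}(c) and an argument like the last bullet in the proof of \Cref{OrbitalIsDirectedEquivs} forces $\mathbf{b}^{f}(x,y)$ to be eventually constant on both sides. If $\sp{x,f} < \infty$ and $\parity{x,f} = 0$, the orbit of $x$ is finite and $\mathbf{b}^{f}(x,y)$ is $\sp{x,f}$-periodic. If $\sp{x,f} = \infty$ and $y \notin f^{\Z}(x)$, apply \ref{CanFitTighteningSpirals} once each with $\sigma = +1$ and $\sigma = -1$ to place tightening spirals around some $f^{k}(x)$, then invoke \Cref{TighteningSpirals} on each tail; if $y \in f^{\Z}(x)$, then $\mathbf{b}^{f}(x,y)$ is a shift of $\mathbf{b}^{f}(x,x) = \indic{\brace{0}}$.

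Parts \ref{CanFitMs}, \ref{CanSeparateStrongLessWithFixedPts}, and \ref{CanSeparateIncomparableWithFixedPts} all follow a common template. First, build an auxiliary $g \in \Aut{\mathbf{P}}$ and finite $p \in \mathcal{P}$ with $p \subseteq g$, $g \in D(p)$, so that $\dur{p}$ realizes the desired configuration around a witness element $x_{0}$ (or pair $(x_{0}, y_{0})$) whose $L$-type in $\mathbf{P}_{g}$ matches that of $x$ (or $(x, y)$) in $\mathbf{P}_{f}$. Second, apply \Cref{DenseClassImpliesUniversality} to embed $(\dur{p}, \mathbf{b}^{g}) \hookrightarrow \mathbf{P}_{f}$. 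Third, by \ref{Ultrahomogeneity} and \Cref{ConjugateIffLIsomorphic}, take an $f$-centralizing element of $\Aut{\mathbf{P}}$ sending the image of $x_{0}$ (or $(x_{0}, y_{0})$) to $x$ (or $(x, y)$); this transports the configuration. Concretely: for \ref{CanFitMs}, use \Cref{UniversalHomogeneity} to realize an M-configuration on $\brace{x_{0}, y_{0}, g(y_{0}), z_{0}, g(z_{0})}$ and extend the partial automorphism $\brace{(y_{0}, g(y_{0})), (z_{0}, g(z_{0}))}$ to some $g$; by \Cref{MLemma}, $\sp{x_{0}, g} = \infty$ automatically, matching $\sp{x, f} = \infty$. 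For \ref{CanSeparateStrongLessWithFixedPts}, further insert a $g$-fixed $c_{0}$ with $x_{0} < c_{0} < y_{0}$ (invoking \Cref{FixedPointBetweenForcesStronglyLess}) and build the $g$-orbits of $x_{0}, y_{0}$ to match the $f$-orbits of $x, y$ in spiral length, parity, and eventual $\mathbf{b}$-behavior. For \ref{CanSeparateIncomparableWithFixedPts}, insert $g$-fixed $c_{0}, d_{0}$ with $y_{0} < c_{0} \perp x_{0}$ and $d_{0} < x_{0}$, $d_{0} \perp y_{0}$ (invoking \Cref{NLemma}), with sub-cases depending on the $\leq^{w}$-relation between $\orb{f}{y}$ and $\orb{f}{x}$.

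For \ref{SigmaDeterminesSingleOrbits}, extend $p$ to $p'$ by augmenting, for each $x \in \dur{p}$: if $\sp{x,f} = n < \infty$, add iterates $\brace{(f^{i}(x), f^{i+1}(x)) : |i| \leq M+n}$ where $M$ is chosen using \ref{OrbitalIsRandom} when $\parity{x,f} \neq 0$ (else $M = 0$) so that $\mathbf{b}^{f}(x,x)$ is constant on $\brace{|i| \geq M}$; \Cref{BDefinableRelations}(c) then propagates the finitely determined values of $b_{i}^{g}(x,x)$ to all $i \in \Z$. If $\sp{x,f} = \infty$, invoke \ref{CanFitMs} to find an M-configuration around $x$ and include its two partial-automorphism pairs in $p'$; by \Cref{MLemma}, $\sp{x,g} = \infty$ for every extension $g \supseteq p'$, so $\mathbf{b}^{g}(x,x) = \indic{\brace{0}}$. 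The main technical obstacle is step one of the template: carefully setting up $g$ so that the witness's $L$-type in $\mathbf{P}_{g}$ matches the prescribed $L$-type in $\mathbf{P}_{f}$, which uses a back-and-forth construction similar to the orbit-chunk construction in \ref{SigmaDeterminesSingleOrbits}. Note that \ref{SigmaDeterminesSingleOrbits} depends on \ref{CanFitMs}, so the proof order should be \ref{AllSequencesEventuallyPeriodic}, \ref{CanFitMs}, \ref{CanSeparateStrongLessWithFixedPts}, \ref{CanSeparateIncomparableWithFixedPts}, \ref{SigmaDeterminesSingleOrbits}.
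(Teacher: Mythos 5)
Your overall plan --- transport finite configurations into $\mathbf{P}_{f}$ via \Cref{DenseClassImpliesUniversality} and then align them with the prescribed points using ultrahomogeneity of $\mathbf{P}_{f}$ --- is exactly the paper's strategy, and your treatments of parts \ref{AllSequencesEventuallyPeriodic} and \ref{SigmaDeterminesSingleOrbits} essentially coincide with the paper's. However, one step fails as written in part \ref{CanFitMs}: you take $p = \brace{\paren{y_{0}, g\paren{y_{0}}}, \paren{z_{0}, g\paren{z_{0}}}}$, so $x_{0} \notin \dur{p}$, and \Cref{DenseClassImpliesUniversality} only embeds $\paren{\dur{p}, \mathbf{b}^{g}}$ into $\mathbf{P}_{f}$; the embedding therefore never sees $x_{0}$, and there is nothing to align with $x$. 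You must put $x_{0}$ into $\dom{p}$ --- the paper does precisely this by adjoining a third pair $\paren{x_{0}, x_{1}}$ with $x_{1}$ positioned consistently (below the image of $z_{0}$, incomparable to the image of $y_{0}$), so that $p$ is still a partial automorphism while the M-configuration continues to force $\sp{x_{0},g} = \infty$ for every $g \supseteq p$.

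For parts \ref{CanSeparateStrongLessWithFixedPts} and \ref{CanSeparateIncomparableWithFixedPts} your route is genuinely different (abstract auxiliary configurations built before \ref{SigmaDeterminesSingleOrbits}), but as instantiated it is under-specified: matching the witnesses to $x,y$ ``in spiral length, parity, and eventual $\mathbf{b}$-behavior'' is not enough, because the alignment step needs $\brace{\paren{\phi\paren{x_{0}},x},\paren{\phi\paren{y_{0}},y}}$ to be a partial $L$-isomorphism, i.e.\ exact equality $\mathbf{b}^{g}\paren{x_{0},x_{0}} = \mathbf{b}^{f}\paren{x,x}$, $\mathbf{b}^{g}\paren{y_{0},y_{0}} = \mathbf{b}^{f}\paren{y,y}$ together with both cross sequences; spiral length, parity and tails do not determine these sequences. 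In \ref{CanSeparateStrongLessWithFixedPts} the cross sequences are forced by the inserted fixed point, but the diagonal ones must be forced exactly (by copying a long enough fragment of the actual $f$-orbits), and in \ref{CanSeparateIncomparableWithFixedPts} only $\mathbf{b}^{f}\paren{x,y} = \mathbf{0}$ is automatic: in the sub-case $\orb{f}{y} \leq_{f}^{w} \orb{f}{x}$ the sequence $\mathbf{b}^{f}\paren{y,x}$ is nontrivial and must itself be forced, which when both spiral lengths are infinite requires tightening spirals via property \ref{CanFitTighteningSpirals} --- a tool your sub-case remark does not name (to be fair, the paper's own part \ref{CanSeparateIncomparableWithFixedPts} is only sketched, and this extra work is needed on either route). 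All of the diagonal-matching duplication disappears if you adopt the paper's order: prove \ref{SigmaDeterminesSingleOrbits} first, apply it to $f$ itself to obtain a determined $p \subseteq f$ carrying the orbit data of $x$ and $y$, then adjoin the new fixed point(s) to form $q := p \cup \brace{\paren{c',c'}}$ and run your universality-plus-ultrahomogeneity transport on $q$; as you yourself note, building the configuration abstractly just repeats the orbit-chunk forcing of \ref{SigmaDeterminesSingleOrbits}.
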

\begin{proof}
    \begin{enumerate}
        \item Let $x,y \in \mathbf{P}$. If $\parity{x,f} \neq 0$, then $\mathbf{b}^{f} \paren{x,y}$ is eventually constant on both sides by property \ref{OrbitalIsRandom}. If $\parity{x,f} = 0$ and $\sp{x,f} = n < \infty$, then $\mathbf{b}^{f} \paren{x,y}$ is $n$-periodic on both sides since $f^{n} \paren{x} = x$. If $\sp{x,f} = \infty$ and $y \in f^{\Z} \paren{x}$, then $\mathbf{b}^{f} \paren{x,y}$ is eventually constant $0$ on both sides.
        
        Finally, if $\sp{x,f} = \infty$ and $y \notin f^{\Z} \paren{x}$, then we apply property \ref{CanFitTighteningSpirals} (twice) to obtain $k^{+}, k^{-} \in \Z$, $n^{+}, n^{-} \geq 1$, and $\alpha^{+}, \beta^{+}, \alpha^{-}, \beta^{-} \in \mathbf{P}$ that, by \Cref{TighteningSpirals}, witness $n^{+}$-periodicity of $\mathbf{b}^{f}_{\geq 0} \paren{y, f^{k^{+}} \paren{x}} = \mathbf{b}^{f}_{\geq k^{+}} \paren{y, x}$ and $\mathbf{b}^{f}_{\leq 0} \paren{f^{k^{+}} \paren{x}, y} = \mathbf{b}^{f}_{\leq -k^{+}} \paren{x, x}$, and similarly $n^{-}$-periodicity of $\mathbf{b}^{f}_{\leq k^{-}} \paren{y, x}$ and $\mathbf{b}^{f}_{\geq -k^{+}} \paren{x,y}$.
        
        \item Suppose $\sp{x,f} = \infty$; then $b_{i}^{f} \paren{x,x}$ holds iff $i = 0$. By universality of $\mathbf{P}$, take a subset of $\mathbf{P}$ with the following order type:
        \begin{center}
            \begin{tikzpicture}
                \node (fy) at (-1,1) {$y''$};
                \node (y) at (-2,0) {$y'$};
                \node (x) at (0,0) {$x'$};
                \node (z) at (1,1) {$z'$};
                \node (fz) at (2,0) {$z''$};
                \node (fx) at (2,-1.5) {$x''$};
                \draw (y) -- (fy) -- (x) -- (z) -- (fz) -- (fx);
            \end{tikzpicture}
        \end{center}
        Let $p \in \mathcal{P}$ be defined by $p = \brace{\paren{x',x''}, \paren{y',y''}, \paren{z',z''}}$. Let $g \in \brack{p} \cap D \paren{p}$. By \Cref{MLemma}, $\sp{x',g} = \infty$, and hence $\mathbf{b}^{g} \paren{x',x'} = \mathbf{b}^{f} \paren{x,x}$. By \Cref{DenseClassImpliesUniversality}, let $\phi \from \paren{\dur{p},\mathbf{b}^{g}} \hookrightarrow \mathbf{P}_{f}$ be an embedding of $L$-structures; by ultrahomogeneity of $\mathbf{P}_{f}$, we assume further that $\phi \paren{x'} = x$. Then $y := \phi \paren{y'}$ and $z := \phi \paren{z'}$ are the desired points.

        \item Fix $x \in \dur{p}$.
        \begin{itemize}
            \item Case I: $f^{\Z} \paren{x}$ is an infinite antichain.\\
            Then we may take $y,z \in \mathbf{P}$ as described in part \ref{CanFitMs}. Let $p' := p \cup f \dhr_{\brace{y,z}}$. Then for each $g \in \brack{p'}$, the points $x$, $y$, $z$, $p' \paren{y}$, and $p' \paren{z}$ force $g^{\Z} \paren{x}$ to be an infinite antichain by \Cref{MLemma}, and so $b_{i}^{g} \paren{x,x}$ holds iff $i = 0$.
            \item Case II: $f^{\Z} \paren{x}$ is a finite cycle.\\
            Extend $p$ to $p' := p \cup f\dhr_{f^{\Z} \paren{x}}$. Then any $g \in \brack{p'}$ agrees with $f$ on $f^{\Z} \paren{x}$, and so $\mathbf{b}^{f} \paren{x,x} = \mathbf{b}^{g} \paren{x,x}$.
            \item Case III: $\parity{x,f} \neq 0$.\\
            Suppose $\parity{x,f} = +1$; the proof for negative parity is similar. By property \ref{OrbitalIsRandom}, there is $N$ large enough so that $b_{n}^{f} \paren{x,x}$ holds and $b_{-n}^{f} \paren{x,x}$ fails for all $n \geq N$. Define $p' := p \cup f \dhr_{\brace{f^{i} \paren{x} : \absval{i} \leq N + \sp{x,f}}}$. Then every $g \in \brack{p'}$ has $b_{i}^{g} \paren{x,x} \iff b_{i}^{f} \paren{x,x}$ for all $\absval{i} \leq N + \sp{x,f}$. This implies $\mathbf{b}^{g} \paren{x,x} = \mathbf{b}^{f} \paren{x,x}$ since $b_{i} \paren{x,x} \implies b_{i + \sp{x,f}} \paren{x,x}$ for all $i \in \Z$.
        \end{itemize}
        
        Observe that case I adds two new orbits of non-zero parity, and cases II and III extend $p$ economically. Thus, we may iteratively extend $p$ by listing those points $x \in \dur{p}$ for which $f^{\Z} \paren{x}$ is an infinite antichain, and adding two orbits of nonzero parity for each such $x$ to determine $\paren{x,x}$. Then we economically extend the rest of the orbits by applying cases II and III.

        \item By part \ref{SigmaDeterminesSingleOrbits}, let $p \in \mathcal{P}$ such that $f \dhr_{\brace{x,y}} \subseteq p \subseteq f$, and which determines $\paren{x,x}$ and $\paren{y,y}$. By universal-homogeneity, we can find a point $c' \in \mathbf{P}$ such that $x' < c' < y'$ for all $x' \in \dur{p} \cap \orb{f}{x}$ and $y' \in \dur{p} \cap \orb{f}{y}$. It follows that the map $q := p \cup \brace{\paren{c',c'}}$ is a partial automorphism. For any $g \in \brack{q}$, we have that $c'$ is a fixed point of $g$ with $x < c' < y$, so $\orb{g}{x} <_{g}^{s} \orb{g}{y}$ by \Cref{FixedPointBetweenForcesStronglyLess}. Hence, $\mathbf{b}^{g} \paren{x,y} = \mathbf{1} = \mathbf{b}^{f} \paren{x,y}$ and $\mathbf{b}^{g} \paren{y,x} = \mathbf{0} = \mathbf{b}^{f} \paren{y,x}$. Moreover, since $f$ and $g$ both extend $p$, we have $\mathbf{b}^{g} \paren{x,x} = \mathbf{b}^{f} \paren{x,x}$ and $\mathbf{b}^{g} \paren{y,y} = \mathbf{b}^{f} \paren{y,y}$ by choice of $p$. It follows that $\paren{\brace{x,y}, \mathbf{b}^{g}} \isom \paren{\brace{x,y}, \mathbf{b}^{f}}$. 
        
        Take any $g \in \brack{q} \cap D \paren{q}$. By \Cref{DenseClassImpliesUniversality}, there is an embedding $\phi \from \paren{\brace{x,y,c'}, \mathbf{b}^{g}} \hookrightarrow \mathbf{P}_{f}$, and by ultrahomogeneity we assume $\phi$ fixes $x$ and $y$. Then $c := \phi \paren{c'}$ witnesses the desired result.

        \item The proof is similar to part \ref{CanSeparateStrongLessWithFixedPts}, with the added fixed points in the appropriate locations.\qedhere
    \end{enumerate}
\end{proof}

We claim that $\Sigma$ consists precisely of the generic automorphisms of $\mathbf{P}$ --- that is, $\Sigma = \Gamma$.

\begin{thm}\label{GenericIsInSigma}
    The generic $f \in \Aut{\mathbf{P}}$ is in $\Sigma$.
\end{thm}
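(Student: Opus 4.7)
The plan is to show $\Gamma \subseteq \Sigma$, where the proof of \Cref{CAPJEPImpliesGenericsExist} exhibits $\Gamma$ explicitly as $\bigcap_{p \in \mathcal{P}} D(p) \cap \bigcap_{p \subseteq p'} E(p, p')$, with $p, p'$ ranging over the cofinal family $\mathcal{L}$ of determined partial automorphisms. I fix $f \in \Gamma$ and verify the four defining properties of $\Sigma$ in turn. Property (A) is immediate: $\Gamma$ is comeager, hence dense, and is precisely $f$'s conjugacy class.

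For property (C), consider $x$ with $\parity{x,f} = +1$ (the $-1$ case being symmetric) and spiral length $n$; by \Cref{OrbitalIsDirectedEquivs} it suffices to show $\orb{f}{x}$ is upper- and lower-directed. Given finite $A \subseteq \orb{f}{x}$, first invoke $D(f\dhr A)$ to find a determined $p \subseteq f$ with $A \cup \{f^i(x) : |i| \le n\} \subseteq \dur{p}$. Then, by universal-homogeneity of $\mathbf{P}$, extend $p$ to $p' \supseteq p$ that adjoins a single new $p'$-iterate $w$ of $x$ (setting $p'(m) := w$ for $m$ the uppermost $p$-iterate of $x$ in $\dur{p}$) positioned strictly above every element of $A$; this is an economical, hence determined, extension. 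Applying $f \in E(p, p')$ yields $r$ fixing $\dur{p}$ with $r(w) \in \orb{f}{x}$ upper-bounding $A$. The lower-directed case is symmetric.

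For property (D), given $x$ with $\sp{x,f} = \infty$, $y \notin f^{\Z}(x)$, and $\sigma = +1$ (other case symmetric), apply \Cref{InfiniteAntichainTypes} (strengthened simultaneously at all $n+1$ positions) to obtain $k \in \Z$ and arbitrarily large $n \ge 1$ with $\operatorname{qftp}_<(f^{k+i}(x), \{y\}) = \operatorname{qftp}_<(f^{k+i+n}(x), \{y\})$ for all $0 \le i \le n$. Take a determined $p \subseteq f$ containing $y$ and $\{f^i(x) : k \le i \le k+n\}$; via universal-homogeneity extend $p$ to $p' \supseteq p$ by adjoining abstract spirals $\alpha_0, \dots, \alpha_n$ of parity $+1$ and $\beta_0, \dots, \beta_n$ of parity $-1$, each of spiral length $n$, placing every $\alpha_i$ and $\beta_i$ to share the $\{y\}$-type of $f^{k+i}(x)$ and forcing $\alpha_0 < f^k(x) < \beta_0$. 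The above type coincidence makes this consistent; after further extension to ensure determinedness, $E(p, p')$ realizes these configurations inside $\mathbf{P}$ fixing $\dur{p}$, yielding the required tightening spirals at $f^k(x)$.

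Property (B), ultrahomogeneity of $\mathbf{P}_f$, is the main obstacle; I prove it by back-and-forth. Enumerate $\mathbf{P} = \{a_0, a_1, \dots\}$ and build a chain of finite $L$-partial isomorphisms $\gamma_0 \subseteq \gamma_1 \subseteq \cdots$ starting from the given $\gamma_0$, alternately forcing $a_n \in \dom{\gamma_{n+1}}$ or $a_n \in \ran{\gamma_{n+1}}$. The crux is the extension step: given a finite $L$-partial iso $\gamma$ and $a \in \mathbf{P}$, produce $b \in \mathbf{P}$ so that $\gamma \cup \{(a, b)\}$ remains an $L$-partial iso. By (C) and (D) already proved, for each $x \in \dom{\gamma}$ the sequences $\mathbf{b}^f(a, x)$ and $\mathbf{b}^f(x, a)$ are eventually periodic, witnessed by finite data (iterate comparisons when $\parity{x,f} \neq 0$, the finite cycle itself when parity is $0$ and $\sp{x,f} < \infty$, and tightening spirals when $\sp{x,f} = \infty$; similarly for the self-sequence $\mathbf{b}^f(a, a)$). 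Collect all this data into a determined $p \subseteq f$ containing $\dur{\gamma} \cup \{a\}$ together with every witness; via \Cref{FinitePartialLIsomCanExtend} the map $\gamma$ extends canonically along $f$-orbits inside $\dur{p}$ to a map $\gamma^*$. By universal-homogeneity of $\mathbf{P}$, extend $p$ to $p'$ by adjoining an abstract point $b'$ realizing over $\gamma^*[\dur{p} \cap \dom{\gamma^*}]$ the transported type of $a$; the accumulated witness data keeps $p'$ determined (after further extension if necessary). Finally $E(p, p')$ provides $r$ fixing $\dur{p}$, and $b := r(b')$ completes the step. The union $h := \bigcup_n \gamma_n$ is then an $L$-automorphism of $\mathbf{P}_f$ extending $\gamma_0$.
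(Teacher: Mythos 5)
Your overall strategy (use the explicit description of $\Gamma$ via the sets $D\paren{p}$ and $E\paren{p,p'}$ and verify the four properties) is reasonable, but the write-up has genuine gaps, and the largest one sits exactly where the paper's real work is. For property \ref{Ultrahomogeneity}, your back-and-forth correctly reduces to the one-point extension step, but that step \emph{is} the content of \Cref{GenericPfIsUltrahomogeneous}, and you assert it rather than prove it. Collecting witnesses of $a$'s behaviour over $\dom{\gamma}$ into a determined $p$ and then adjoining a single abstract point $b'$ ``realizing the transported type'' does not control the infinite tails of $\mathbf{b}^{f}\paren{b,\gamma\paren{x'}}$: extending further to a determined $p'$ pins those tails down to \emph{some} value, but nothing in your construction guarantees it is the \emph{correct} value, because the witnesses you included constrain $a$, not the new point. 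What is needed is to adjoin, on the image side, a finite blueprint consisting of a long fragment of the intended orbit of $b'$ together with configurations that force the tails --- fixed points as in \Cref{FixedPointBetweenForcesStronglyLess} and \Cref{NLemma}, an ``M'' as in \Cref{MLemma} when $\sp{a,f}=\infty$, and tightening spirals as in \Cref{TighteningSpirals} --- and then to verify that this blueprint is a partial automorphism realizable over $\dur{p}$ without creating new relations among old points (the paper does this via \Cref{StrongAmalgamationProperty}, a delicate choice of $N$, $k$, $n$, and passage to a finite fragment). The sentence ``the accumulated witness data keeps $p'$ determined (after further extension if necessary)'' is where that entire argument would have to live.

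Two of the remaining steps also fail as written. In \ref{OrbitalIsRandom}, adjoining a single new iterate $w$ with $p'\paren{m}:=w$ above all of $A$ is in general inconsistent: the partial-automorphism condition forces $w$'s relations to $\ran{p}$ (for $z\in\dom{p}$, $w\geq p\paren{z}\iff m\geq z$), and an element of $\orb{f}{x}$ need not lie below $f\paren{m}$; worse, since $r$ fixes $\dur{p}$ and $r\circ p'\circ r^{-1}\subseteq f$, one gets $r\paren{w}=f\paren{m}$, a pre-existing point, so your argument would ``force'' $f\paren{m}>a$ for every $a\in A$, which is simply false in general. The paper instead proves that $C\paren{x}$ has dense interior --- extending along the true orbit until every point of $\dur{p}\cap\orb{f}{x}$ is sandwiched below $f^{N}\paren{x'}$ and then appending abstract continuation points $a_{i}$ --- and catches the generic class by conjugacy-invariance of $\bigcap_{x}C\paren{x}$. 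In \ref{CanFitTighteningSpirals}, the ``simultaneous'' strengthening of \Cref{InfiniteAntichainTypes} you invoke (a length-$\paren{n+1}$ window of quantifier-free types repeating under the shift by $n$) is false for general sequences over a finite alphabet: an overlap-free sequence such as Thue--Morse contains no $n$-periodic factor of length $2n+1$ for any $n$. Pigeonhole only yields agreement at the two endpoints $k$ and $k+n$, and that is all the paper uses, matching each $\alpha_{i},\beta_{i}$ to the type of the corresponding actual iterate $f^{k+i}\paren{x}$; moreover, matching types over $\brace{y}$ alone leaves unverified that your extended map is a partial automorphism over all of $\dur{p}$, which is why the paper matches types over $\dur{p}\setminus f^{\Z}\paren{x}$.
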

\begin{proof}
    \begin{enumerate}[label=(\Alph*)]
        \item[\ref{ClassIsDense}] $\Aut{\mathbf{P}}$ is Polish, so any comeagre conjugacy class is dense.
        
        \item[\ref{Ultrahomogeneity}] Immediate from the case $g = f$ in \Cref{GenericPfIsUltrahomogeneous} below.
        
        \item[\ref{OrbitalIsRandom}] We claim that for every $x \in \mathbf{P}$, the set $$C \paren{x} := \brace{f \in \Aut{\mathbf{P}} : \parity{x,f} \neq 0 \implies \textnormal{$\mathbf{b}^{f}_{\geq 0} \paren{x,x}$ is eventually constant $1$}}$$ has dense interior; it suffices to show that for all $p \in \mathcal{P}$ there is an extension $q \supseteq p$ for which $\brack{q} \subseteq C \paren{x}$.
    
        Let $p \in \mathcal{P}$, and let $f \in \brack{p}$ such that $\parity{x,f} = +1$. Choose $x' \in \orb{f}{x}$ and $N$ large enough so that for all $y \in \dur{p} \cap \orb{f}{x}$, there exist $0 < i \leq j < N$ such that $f^{i} \paren{x'} \leq y \leq f^{j} \paren{x'}$. Without loss of generality, take $N$ a multiple of $\sp{x',f}$. Let $a_{0} := f^{N} \paren{x'}$; by universal-homogeneity of $\mathbf{P}$, let $a_{i} \in \mathbf{P}$ for each $1 \leq i < \sp{x',f}$ be an upper bound of both $x'$ and $f^{N+i-\sp{x',f}} \paren{x}$, such that $a_{i}$ and $f^{N+i-\sp{x',f}} \paren{x}$ have the same types over $\dur{p} \setminus \orb{f}{x}$. Then define $q \in \mathcal{P}$ by: $$q := p \cup f \dhr_{\brace{f^{i} \paren{x'} : 0 \leq i < N}} \cup \brace{\paren{a_{i-1}, a_{i}} : 1 \leq i < \sp{x,f}}.$$
        Then $q \supseteq p$, and for every $g \in \brack{q}$, $\parity{x,g} = +1$ and $x' \leq g^{n} \paren{x'}$ for every $n \geq N$. Hence $\mathbf{b}^{g}_{\geq N} \paren{x',x'}$ is eventually constant $1$; by \Cref{OrbitalIsDirectedEquivs}, this property is orbital-invariant, so $\mathbf{b}^{g}_{\geq N} \paren{x,x}$ is eventually constant $1$. Thus, $\brack{q} \subseteq C \paren{x}$. The same argument applies to the $\parity{x,f} = -1$ case. Thus $C \paren{x}$ has dense interior, so $\bigcap_{x \in \mathbf{P}} C \paren{x}$ is comeagre.
    
        Let $f, g \in \Aut{\mathbf{P}}$. A straightforward calculation shows $\mathbf{b}^{f} \paren{x,x} = \mathbf{b}^{gfg^{-1}} \paren{g \paren{x}, g \paren{x}}$, and $\parity{x,f} = \parity{g \paren{x}, gfg^{-1}}$. Then $f \in C \paren{x}$ iff $gfg^{-1} \in C \paren{g \paren{x}}$. Hence, the intersection $\bigcap_{x \in \mathbf{P}} C \paren{x}$ is conjugacy-invariant, and so must contain every generic element. \qedhere

        \item[\ref{CanFitTighteningSpirals}] For $x \in \mathbf{P}$, $y \subseteq \mathbf{P}$ finite, and $\sigma \in \brace{-1,+1}$, let $S \paren{x,y,\sigma}$ denote the set of $f \in \Aut{\mathbf{P}}$ satisfying property \ref{CanFitTighteningSpirals} for $x$, $y$, and $\sigma$. We claim every such $S \paren{x,y,\sigma}$ has dense interior. Moreover, we assume $\sigma = +1$, since the proof for $\sigma = -1$ is mirrored.

        Let $p \in \mathbf{P}$, and let $f \in \brack{p}$. By extending $p$ to $p \cup f \dhr_{\brace{y}}$, we may assume without loss of generality that $y \in \dur{p}$. Suppose $\sp{x,f} = \infty$ and $y \notin f^{\Z} \paren{x}$ (if no $f \in \brack{p}$ satisfies this then we are done, since then $\brack{p} \subseteq S \paren{x,y,\sigma}$ vacuously).

        Thus, we may repeat part of the proof of Kuske \& Truss that determined maps are cofinal; we outline this argument here for the sake of completeness. By \Cref{InfiniteAntichainTypes}, and since $\dur{p}$ is finite, we may choose $k \in \Z$ and $n > 0$ so that $$\operatorname{qftp}_{<} \paren{f^{k} \paren{x}, \dur{p} \setminus f^{\Z} \paren{x}} = \operatorname{qftp}_{<} \paren{f^{k + n} \paren{x}, \dur{p} \setminus f^{\Z} \paren{x}}.$$
        Moreover, we may choose $n$ large enough so that $f^{k'} \paren{x} \notin \dur{p}$ for all $k' \geq k + n$. Then extend $p$ to agree with $f$ on $\brace{f^{i} \paren{x} : 0 \leq i < k+n}$.

        We then introduce new elements $\alpha_{i}$ and $\beta_{i}$ for $0 \leq i \leq n$, subject to the following:
        \begin{itemize}
            \item $\alpha_{i} \perp \alpha_{j}$ and $\beta_{i} \perp \beta_{j}$ for $0 < \absval{i-j} < n$;
            \item $\alpha_{0} < \alpha_{n}$ and $\beta_{n} < \beta_{0}$;
            \item $\alpha_{i} < f^{k+i} \paren{x} < \beta_{i}$;
            \item $\alpha_{i}$, $\beta_{i}$, and $f^{k+i} \paren{x}$ have the same quantifier-free types over $\dur{p} \setminus f^{\Z} \paren{x}$.
        \end{itemize}

        By the condition on the quantifier-free types, we may realize the $\alpha_{i}$'s and $\beta_{i}$'s as points in $\mathbf{P}$, and the map $q$ given by $$q := p \cup \brace{\paren{\alpha_{i}, \alpha_{i+1}} : 0 \leq i < n} \cup \brace{\paren{\beta_{i}, \beta_{i+1}} : 0 \leq i < n}$$
        is a partial automorphism extending $p$. Moreover, for every $g \in \brack{q}$, the points $\alpha_{0}$ and $\beta_{0}$ witness $g \in S \paren{x,y,\sigma}$.

        Hence every $S \paren{x,y,\sigma}$ is dense open, and so $\bigcap_{x \in \mathbf{P}} \bigcap_{y \in \mathbf{P}} \bigcap_{\absval{\sigma} = 1} S \paren{x,y,\sigma}$ is comeagre.

        Moreover, for any $f,g \in \Aut{\mathbf{P}}$ it is straightforward to show that $f \in S \paren{x,y,\sigma}$ if and only if $gfg^{-1} \in S \paren{g \paren{x}, g \paren{y}, \sigma}$. It follows that $\bigcap_{x \in \mathbf{P}} \bigcap_{y \in \mathbf{P}} \bigcap_{\absval{\sigma} = 1} S \paren{x,y,\sigma}$ is conjugacy-invariant, so it contains every generic element.
    \end{enumerate}
\end{proof}

\begin{thm}\label{GenericPfIsUltrahomogeneous}
    Let $f,g \in \Aut{\mathbf{P}}$ be generic. Then every finite partial $L$-isomorphism $\gamma \from \mathbf{P}_{f} \rightharpoonup \mathbf{P}_{g}$ extends to an isomorphism $\bar{h} \from \mathbf{P}_{f} \to \mathbf{P}_{g}$.
\end{thm}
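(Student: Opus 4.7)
The plan is first to reduce to ultrahomogeneity of $\mathbf{P}_{f}$ and then run a back-and-forth. Since $f$ and $g$ are both generic they are conjugate, so \Cref{ConjugateIffLIsomorphic} yields an $L$-isomorphism $\Phi \from \mathbf{P}_{g} \to \mathbf{P}_{f}$; then $\Phi \circ \gamma$ is a finite partial $L$-automorphism of $\mathbf{P}_{f}$, and any extension of $\Phi \circ \gamma$ to a full $L$-automorphism of $\mathbf{P}_{f}$ composed with $\Phi^{-1}$ gives the desired $\bar{h}$. So it suffices to prove that $\mathbf{P}_{f}$ is ultrahomogeneous. This is carried out by enumerating $\mathbf{P}$ on both sides and iteratively extending $\gamma$ one point at a time; the crux is the one-point extension step: given a finite partial $L$-automorphism $\gamma_{n} \from A \to B$ of $\mathbf{P}_{f}$ and $x \in \mathbf{P}$ with $x \notin f^{\Z} \brack{A}$ (the case $x \in f^{\Z} \brack{A}$ being handled directly by \Cref{FinitePartialLIsomCanExtend}), find $y \in \mathbf{P}$ such that $\gamma_{n} \cup \brace{\paren{x,y}}$ is still a partial $L$-isomorphism.

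To produce $y$, I would first reduce the infinite $L$-type of $x$ over $A$ to finite information. By \Cref{PropertiesOfSigma}(1) --- whose proof uses only properties (C), (D) and not ultrahomogeneity --- each of the sequences $\mathbf{b}^{f} \paren{x,a}$, $\mathbf{b}^{f} \paren{a,x}$, and $\mathbf{b}^{f} \paren{x,x}$ is eventually periodic on both sides. Using (C), (D), and universal-homogeneity of $\mathbf{P}$, I would build a partial automorphism $p \in \mathcal{P}$ extending $f \dhr_{A \cup \brace{x}}$ together with explicit witnesses --- fixed points forcing orbital relations per \Cref{FixedPointBetweenForcesStronglyLess,NLemma}, M-configurations per \Cref{MLemma} when $\sp{x,f} = \infty$, and tightening spirals per \Cref{TighteningSpirals} --- chosen so that the $L$-type of $x$ over $A$ is forced on any extension of $p$. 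Then, choosing $p' \in \brack{p} \cap D \paren{p}$ in the generic class and applying \Cref{DenseClassImpliesUniversality}, I obtain an embedding $\paren{\dur{p}, \mathbf{b}^{p'}} \hookrightarrow \mathbf{P}_{f}$, whose image of $x$ is the candidate $y$.

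The main obstacle is aligning this embedding with $\gamma_{n}$ on $A$: the naive route is to apply ultrahomogeneity of $\mathbf{P}_{f}$ to shift the embedding, but since ultrahomogeneity is what we are trying to prove, this would be circular. I would resolve this by carefully scheduling the back-and-forth, introducing all of the ``witness'' points needed for the extension step at $x$ at \emph{earlier} stages of the enumeration. Each such earlier stage is itself a simpler one-point extension --- asserting, say, the existence of a fixed point at a prescribed order location relative to $\dur{\gamma_{n}}$, or of an M-configuration around a given orbit --- which can be carried out using universal-homogeneity of $\mathbf{P}$ combined with the characterization of the generic class via the sets $E \paren{p,p'}$ recalled in \Cref{CAPJEPImpliesGenericsExist}. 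Once every witness is already present in $A \cup B$, alignment of the embedding with $\gamma_{n}$ is automatic, and the back-and-forth closes.
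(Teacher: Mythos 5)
Your opening reduction (conjugate $f$ and $g$, so it suffices to prove ultrahomogeneity of $\mathbf{P}_{f}$) is fine, and you correctly identify the non-circular ingredients: \Cref{FinitePartialLIsomCanExtend}, eventual periodicity via \Cref{PropertiesOfSigma}\ref{AllSequencesEventuallyPeriodic} (which indeed needs only properties \ref{OrbitalIsRandom} and \ref{CanFitTighteningSpirals}), and the forcing lemmas \Cref{FixedPointBetweenForcesStronglyLess,NLemma,MLemma,TighteningSpirals}. The gap is in the one step that carries all the weight: producing a point $y$ whose $\mathbf{b}$-relations to the \emph{actual} set $\ran{\gamma_{n}}$ match those of $x$ to $\dom{\gamma_{n}}$. \Cref{DenseClassImpliesUniversality} only gives an $L$-embedding of the abstract configuration $\paren{\dur{p},\mathbf{b}^{p'}}$ \emph{somewhere} in $\mathbf{P}_{f}$, with no control over where the copies of $A$ or of your witnesses land; this is the alignment problem you name, and scheduling the witnesses into earlier stages does not repair it. Even when the witnesses and their $\gamma_{n}$-images already sit inside $A \cup B$, a fresh embedding of the abstract structure sends its own copies of those witnesses to uncontrolled points, so the image of $x$ has the right type over the embedded copies, not over $\ran{\gamma_{n}}$; ``alignment \ldots{} is automatic'' is exactly the missing proof. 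The only available mechanism giving pointwise control is genericity itself, used the way the paper uses it: take a determined $p$ inside the target automorphism with $\ran{\gamma} \subseteq \dur{p}$, build via \Cref{StrongAmalgamationProperty} and universal-homogeneity a finite partial automorphism $q \supseteq p$ containing a fragment of the prospective orbit of $y$ together with the auxiliary fixed points, ``M''s and tightening spirals, pass to a determined $p' \supseteq q$, and invoke membership in $E \paren{p,p'}$, whose witness $r$ fixes $\dur{p}$ --- hence $\ran{\gamma}$ --- pointwise, so that $y := r \paren{a_{0}}$ is genuinely tied to $\ran{\gamma_{n}}$. You invoke $E \paren{p,p'}$ only for the preliminary witness stages, never for the main extension, so the theorem's central difficulty is not addressed.

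There is a second problem with the scheduling idea itself. To insert witness pairs into the back-and-forth you need the witnesses to exist on the domain side in the right position relative to $x$ and $A$: a fixed point between $\orb{f}{x}$ and $\orb{f}{a}$ when $\orb{f}{x} <_{f}^{s} \orb{f}{a}$, fixed points witnessing $\nleq_{f}^{w}$, an ``M'' around $x$ when $\sp{x,f} = \infty$. In the paper these existence statements are \Cref{PropertiesOfSigma}\ref{CanFitMs}, \ref{CanSeparateStrongLessWithFixedPts}, \ref{CanSeparateIncomparableWithFixedPts}, whose proofs use ultrahomogeneity of $\mathbf{P}_{f}$ --- the statement you are proving --- so citing them here is circular; they would have to be re-derived directly from the sets $D \paren{p}$ and $E \paren{p,p'}$ (in the spirit of \Cref{OrbitalOrderCapturedByDeterm}), which your sketch does not do. The paper sidesteps this entirely: no witnesses around $x$ are needed on the domain side, the auxiliary points are created abstractly and realized only on the target side, and they are never added to $\gamma$. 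Finally, the bulk of the actual proof --- checking that the amalgamated configuration (the $a_{i}$'s, the points $c_{x'}, d_{x'}, e_{x'}$, the spirals, cut down to a finite fragment with suitably chosen $N$, $k$, $n$) really is a partial automorphism and creates no new relations among $\dur{p}$ --- is the delicate part, and it is entirely absent from the proposal.
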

One may also view this theorem as a strengthening of \Cref{ConjugateIffLIsomorphic}. If $f$ and $g$ are generic, then certainly they are conjugate, and so $\mathbf{P}_{f} \isom \mathbf{P}_{g}$; this theorem says that moreover, a witnessing isomorphism may be taken to extend any desired finite partial $L$-isomorphism.
\begin{proof}
    We show that for every $x \in \mathbf{P} \setminus \dom{\gamma}$, there is $y \in \mathbf{P} \setminus \ran{\gamma}$ for which $\gamma' := \gamma \cup \brace{\paren{x,y}}$ is an $L$-isomorphism. By \Cref{FinitePartialLIsomCanExtend}, we may uniquely extend $\gamma$ to an $L$-isomorphism $h \from f^{\Z} \brack{\dom{\gamma}} \to g^{\Z} \brack{\ran{\gamma}}$. If $x \in \dom{h}$, then clearly $\gamma' := h \dhr_{\dom{\gamma} \cup \brace{x}}$ works, so assume without loss of generality that $x \notin \dom{h}$.
    
    \begin{itemize}
        \item Case I: $x \notin \dom{h}$, $\parity{x,f} = 0$, and $\sp{x,f} = n < \infty$\\
        By $g \in D \paren{g \dhr_{\ran{\gamma}}}$, take $p \subseteq g$ determined such that $\ran{\gamma} \subseteq \operatorname{dur} \paren{p}$. By \Cref{StrongAmalgamationProperty}, define the poset $Q$ to be the result of amalgamating $\dom{h} \cup f^{\Z} \paren{x}$ with $X := g^{\Z} \brack{\dur{p}} \supseteq \ran{h}$ over their common subset $\dom{h} \isom \ran{h}$; let $a_{i}$ denote the image of $f^{i} \paren{x}$ under this amalgamation. We claim $q := p \cup \brace{\paren{a_{0}, a_{1}}, \ldots, \paren{a_{n-1}, a_{0}}}$ is a partial automorphism of $Q$. Indeed, if $a_{i} < y'$ for $y' \in \dom{p}$, then $a_{i} < y'' < y'$ for some $y'' \in \ran{h}$. Then $f^{i} \paren{x} < h^{-1} \paren{y''}$, and so $f^{i+1} \paren{x} < f \paren{h^{-1} \paren{y''}} = h^{-1} \paren{g \paren{y''}}$, and thus $a_{i+1} < g \paren{y''}$. Also, $y'' < y'$ implies $g \paren{y''} < g \paren{y'}$. It follows that $a_{i} < y'$

        \item Case II: $x \notin \dom{h}$ and $\parity{x,f} \neq 0$\\
        We assume $\parity{x,f} = +1$, as the case for $\parity{x,f} = -1$ is symmetric. By $g \in D \paren{g \dhr_{\ran{\gamma}}}$, take $p \subseteq g$ determined such that $\ran{\gamma} \subseteq \operatorname{dur} \paren{p}$, and let $X := g^{\Z} \brack{\dur{p}} \supseteq \ran{h}$.
        
        We let $Q$ be the partial order obtained via \Cref{StrongAmalgamationProperty} by amalgamating $\dom{h} \cup f^{\Z} \paren{x}$ and $X$ over their common subset $\dom{h} \isom \ran{h}$.\footnote{Since $Q$ is infinite, we cannot claim at this point that $Q$ is realized as a subset of $\mathbf{P}$. We will embed a finite fragment of $Q$ into $\mathbf{P}$ later.} Let $a_{i}$ be the image of $f^{i} \paren{x}$ in $Q$.
        
        We extend $Q$ by augmenting it with the following points:
        \begin{itemize}
            \item For each $x' \in \dom{\gamma}$ for which $\orb{f}{x} <_{f}^{s} \orb{f}{x'}$ (respectively $\orb{f}{x'} <_{f}^{s} \orb{f}{x}$), observe that $a_{i} <_{Q} g^{j} \paren{h \paren{x'}}$ (respectively $g^{j} \paren{h \paren{x'}} <_{Q} a_{i}$) for every $i,j \in \Z$. Thus, we add a point $c_{x'}$ with $a_{i} <_{Q} c_{x'} <_{Q} g^{j} \paren{h \paren{x'}}$ (respectively $g^{j} \paren{h \paren{x'}} <_{Q} c_{x'} <_{Q} a_{i}$).
            \item For each $x' \in \dom{\gamma}$ for which $\orb{f}{x} \nleq_{f}^{w} \orb{f}{x'}$, observe that $f^{i} \paren{x} \nleq f^{j} \paren{x'}$, hence $a_{i} \nleq_{Q} g^{j} \paren{h \paren{x'}}$, for every $i,j \in \Z$. Thus, we add a point $d_{x'}$ with $d_{x'} <_{Q} a_{i}$ and $d_{x'} \perp_{Q} g^{j} \paren{h \paren{x'}}$ for all $i,j \in \Z$.
            \item Similarly, for each $x' \in \dom{\gamma}$ for which $\orb{f}{x'} \nleq_{f}^{w} \orb{f}{x}$, we add a point $e_{x'}$ with $a_{i} <_{Q} e_{x'}$ and $e_{x'} \perp_{Q} g^{j} \paren{h \paren{x'}}$ for all $i,j \in \Z$.
        \end{itemize}
        To be precise, we add these points by applying \Cref{StrongAmalgamationProperty}, so these new points are incomparable to all points of $Q$ not mentioned (except those forced to be related by transitivity).
        
        Let $N < \omega$ be large, its value to be determined later. We now claim $$q \paren{z} := \begin{cases}
        p \paren{z} & \text{if $z \in \dom{p}$;}\\
        a_{i+1} & \text{if $z = a_{i}$ for $-N \leq i < N$;}\\
        z & \text{if $z$ is $c_{x'}$, $d_{x'}$, or $e_{x'}$ for some $x'$;}
        \end{cases}$$ is a partial automorphism of $Q$: 
        \begin{itemize}
            \item Note $a_{i} <_{Q} a_{j} \iff f^{i} \paren{x} < f^{j} \paren{x} \iff f^{i+1} \paren{x} < f^{j+1} \paren{x} \iff a_{i+1} <_{Q} a_{j+1} \iff q \paren{a_{i}} <_{Q} q \paren{a_{j}}$ for each $-N \leq i,j < N$. Similarly, $y' <_{Q} y'' \iff y' < y'' \iff g \paren{y'} < g \paren{y''} \iff q \paren{y'} <_{Q} q \paren{y''}$ for each $y',y'' \in \dom{p}$.
            \item Suppose $-N \leq i < N$ and $y' \in \ran{h}$. Then $a_{i} <_{Q} y' \iff f^{i} \paren{x} < h^{-1} \paren{y'} \iff f^{i+1} \paren{x} < f \paren{h^{-1} \paren{y'}} = h^{-1} \paren{g \paren{y'}} \iff a_{i+1} <_{Q} g \paren{y'}$. Then if $y' \in X$, since $\ran{h}$ is $g$-invariant and $Q$ was constructed by amalgamation over $\ran{h}$:
            \begin{align*}
                a_{i} <_{Q} y' &\iff \textnormal{$a_{i} <_{Q} y'' <_{Q} y'$ for some $y'' \in \ran{h}$}\\
                &\iff \textnormal{$a_{i+1} <_{Q} g \paren{y''} <_{Q} g \paren{y'}$ for some $y'' \in \ran{h}$}\\
                &\iff \textnormal{$a_{i+1} <_{Q} y''' <_{Q} g \paren{y'}$ for some $y''' \in \ran{h}$}\\
                &\iff a_{i+1} <_{Q} g \paren{y'}\\
                &\iff q \paren{a_{i}} <_{Q} q \paren{y'}
            \end{align*}
            The same argument also shows $y' <_{Q} a_{i} \iff q \paren{y'} <_{Q} q \paren{a_{i}}$ as well.
            \item By definition of $c_{x'}$, if $x' \in \dom{\gamma}$ with $\orb{f}{x} <_{f}^{s} \orb{f}{x'}$, we have $a_{i} <_{Q} c_{x'}$ for \emph{all} $i$; therefore, $a_{i} <_{Q} c_{x'} \iff a_{i+1} <_{Q} c_{x'} \iff q \paren{a_{i}} <_{Q} q \paren{c_{x'}}$.
            
            Similarly, $c_{x'} <_{Q} g^{j} \paren{h \paren{x'}} \iff c_{x'} <_{Q} g^{j+1} \paren{h \paren{x'}}$ for all $j$. If $c_{x'} <_{Q} y''$ for some $y'' \in X \setminus g^{\Z} \paren{h \paren{x'}}$, then $c_{x'} <_{Q} g^{j} \paren{h \paren{x'}} < y'' \iff c_{x'} <_{Q} g^{j + 1} \paren{h \paren{x'}} < g \paren{y''}$ for all $j \in \Z$; in particular, $c_{x'} <_{Q} y'' \iff q \paren{c_{x'}} <_{Q} g \paren{y''}$ for every $y'' \in X$.
            
            If instead $y'' <_{Q} c_{x'}$, then $y'' <_{Q} a_{i} <_{Q} c_{x'}$ for some $i$, and a similar argument shows $y'' <_{Q} c_{x'} \iff g \paren{y''} <_{Q} q \paren{c_{x'}}$ for each $y'' \in X$.
            
            The same arguments apply if instead $\orb{f}{x'} <_{f}^{s} \orb{f}{x}$. The corresponding statements for $d_{x'}$ and $e_{x'}$ are proven similarly, using the same kind of case-checking.
        \end{itemize}
        
        By the above and by universal-homogeneity of $\mathbf{P}$, there is an embedding $\dur{q} \hookrightarrow \mathbf{P}$ which fixes $\dur{p}$ pointwise; identify the $a_{i}$'s, the $c_{x'}$'s, and the $d_{x'}$'s with their image under this embedding. Then $q \in \mathcal{P}$ with $p \subseteq q$; by cofinality, take $p'$ determined with $q \subseteq p'$. Then $g \in E \paren{p,p'}$ gives $r \in \mathcal{P}$ with $\dom{r} = \dur{p'}$, $r$ fixes $\dur{p}$ pointwise, and $r \circ p' \circ r^{-1} \subseteq g$. We claim $\gamma' := h \cup \brace{\paren{x, y}}$ is a finite partial $L$-isomorphism, where $y := r \paren{a_{0}}$. At this point, we decide how big to make $N$. Let $n := \sp{x,f}$. Then by \Cref{OrbitalIsDirectedEquivs} we choose $N$ to be large enough so that whenever $x' \in \dom{\gamma}$, we have:
        \begin{itemize}
            \item $\mathbf{b}_{\geq N - n}^{f} \paren{x,x} = \mathbf{1}$, and $b_{\leq -N + n}^{f} \paren{x,x} = \mathbf{0}$;
            \item $\mathbf{b}_{\geq N - n}^{f} \paren{x,x'} = \mathbf{1}$ if $\orb{f}{x} \leq_{f}^{w} \orb{f}{x'}$;
            \item $\mathbf{b}_{\geq N - n}^{f} \paren{x',x} = \mathbf{1}$ if $\orb{f}{x'} \leq_{f}^{w} \orb{f}{x}$;
            \item $\mathbf{b}_{\leq -N + n}^{f} \paren{x,x'} = \mathbf{0}$ if $\orb{f}{x} \nless_{f}^{s} \orb{f}{x'}$;
            \item $\mathbf{b}_{\leq -N + n}^{f} \paren{x',x} = \mathbf{0}$ if $\orb{f}{x'} \nless_{f}^{s} \orb{f}{x}$.
        \end{itemize}
        Then:
        \begin{itemize}
            \item For $-N \leq i \leq N$:
            \begin{align*}
                x < f^{i} \paren{x} &\iff a_{0} < a_{i}\\
                &\iff r \paren{a_{0}} < r \paren{a_{i}} = g^{i} \paren{r \paren{a_{0}}}
            \end{align*}
            Hence $b_{i}^{f} \paren{x,x} \iff b_{i}^{g} \paren{y,y}$ for all $\absval{i} \leq N$. By choice of $N$, this forces $\mathbf{b}^{f} \paren{x,x} = \mathbf{b}^{g} \paren{y,y}$.
            \item For $x' \in \dom{\gamma}$, we have:
            \begin{align*}
                x' < f^{i} \paren{x} &\iff \gamma \paren{x'} = h \paren{x'} < a_{i}\\
                &\iff \gamma \paren{x'} = r \paren{\gamma \paren{x'}} < r \paren{a_{i}} = g^{i} \paren{r \paren{a_{0}}}
            \end{align*}
            Hence $b_{i}^{f} \paren{x',x} = b_{i}^{g} \paren{\gamma \paren{x'},y}$ for all $\absval{i} \leq N$. By choice of $N$, this forces $\mathbf{b}^{f} \paren{x',x} = \mathbf{b}^{g} \paren{\gamma \paren{x'},y}$.
            \item For $x' \in \dom{\gamma}$, we have:
            \begin{align*}
                x < f^{i} \paren{x'} &\iff f^{-i} \paren{x} < x'\\
                &\iff a_{-i} < h \paren{x'}\\
                &\iff g^{-i} \paren{r \paren{a_{0}}} = r \paren{a_{-i}} < r \paren{h \paren{x'}} = \gamma \paren{x'}\\
                &\iff r \paren{a_{0}} < g^{i} \paren{\gamma \paren{x'}}
            \end{align*}
            Hence $b_{i}^{f} \paren{x,x'} = b_{i}^{g} \paren{y,\gamma \paren{x'}}$ for all $\absval{i} \leq N$. By choice of $N$, this forces $\mathbf{b}^{f} \paren{x,x'} = \mathbf{b}^{g} \paren{y,\gamma \paren{x'}}$.
        \end{itemize}
        \item Case III: $x \notin \dom{h}$ and $\sp{x,f} = \infty$\\
        Our technique is the same as for Case III, but the $q$ we construct is different. Let $p$ and $X$ be as in Case III. We first obtain $Q$ by amalgamating $\dom{h} \cup f^{\Z} \paren{x}$ and $X$ over $\dom{h}$ as before, then augmenting $Q$ with the same $c_{x'}$'s, $d_{x'}$'s and $e_{x'}$'s, but we also add points $\sigma_{0}$, $\sigma_{1}$, $\tau_{0}$, and $\tau_{1}$ arranged in an ``M''-configuration around $a_{0}$ (as shown in \Cref{fig:DetermineInfiniteSpiral} later).
            
        In \Cref{GenericIsInSigma}, we have already shown that the generic $f$ satisfies properties \ref{OrbitalIsRandom} and \ref{CanFitTighteningSpirals}. Moreover, recall that these two properties are the only ones needed to prove \Cref{PropertiesOfSigma}\ref{AllSequencesEventuallyPeriodic}. Thus, for all $x' \in h^{-1} \brack{\dur{p}}$, there are $k \in \Z$ and $n \geq 1$ such that $\mathbf{b}^{f}_{\geq k} \paren{x,x'}$ is $n$-periodic, and similarly for $\mathbf{b}^{f}_{\geq k} \paren{x',x}$. Note that replacing $k$ with any $k' \geq k$ preserves periodicity. Also, an analogous statement is true for the left-hand tails as well. Thus, by taking $k$ large enough, and taking the least common multiple of all the resulting $n$'s, we may assume that $k \geq 0$ and $n \geq 1$ are such that $\mathbf{b}^{f}_{\geq k} \paren{x,x'}$, $\mathbf{b}^{f}_{\geq k} \paren{x',x}$, $\mathbf{b}^{f}_{\leq -k} \paren{x,x'}$, and $\mathbf{b}^{f}_{\leq -k} \paren{x',x}$ are all $n$-periodic for all $x' \in h^{-1} \brack{\dur{p}}$.
            
        In particular, note that $f^{k} \paren{x}$ and $f^{k+n} \paren{x}$ have the same order-types over the set $h^{-1} \brack{\dur{p}}$, and so $a_{k}$ and $a_{k+n}$ have the same order types over $\ran{h} \cap \dur{p}$. Likewise, $a_{-k}$ and $a_{-k-n}$ have the same order types over $\ran{h} \cap \dur{p}$. We now add tightening spirals for the sequence of $a_{i}$'s, as follows. Take points $\alpha_{i}^{+}, \beta_{i}^{+}, \alpha_{-i}^{-}, \beta_{-i}^{-}$ for $0 \leq i \leq n$ such that:
        \begin{itemize}
            \item $\alpha_{i}^{+} \perp \alpha_{j}^{+}$ whenever $0 < \absval{i-j} < n$, and similarly for the $\beta_{i}^{+}$'s, $\alpha_{i}^{-}$'s, and $\beta_{i}^{-}$'s;
            \item $\alpha_{0}^{+} < \alpha_{n}^{+}$, $\alpha_{0}^{-} < \alpha_{n}^{-}$, $\beta_{n}^{+} < \beta_{0}^{+}$, and $\beta_{n}^{-} < \beta_{0}^{-}$;
            \item $\alpha_{i}^{+} < a_{k+i} < \beta_{i}^{+}$ and $\alpha_{i}^{-} < a_{-k-i} < \beta_{i}^{-}$;
            \item $\alpha_{i}^{+} \perp \alpha_{j}^{-}$ and $\beta_{i}^{+} \perp \beta_{j}^{-}$ for all $i,j \leq n$;
            \item $\alpha_{i}^{+} \perp \beta_{j}^{-}$ and $\alpha_{i}^{-} \perp \beta_{j}^{+}$ for all $i,j \leq n$, with the possible exception of $\alpha_{0}^{+} < \beta_{0}^{-}$ and $\alpha_{0}^{-} \perp \beta_{0}^{+}$ in case $k=0$;
            \item For each $0 \leq i \leq n$, $\alpha_{i}^{+}$, $\beta_{i}^{+}$, and $a_{k+i}$ have the same types over $\ran{h} \cap \dur{p}$;
            \item For each $0 \leq i \leq n$, $\alpha_{i}^{-}$, $\beta_{i}^{-}$, and $a_{-k-i}$ have the same types over $\ran{h} \cap \dur{p}$.
        \end{itemize}
        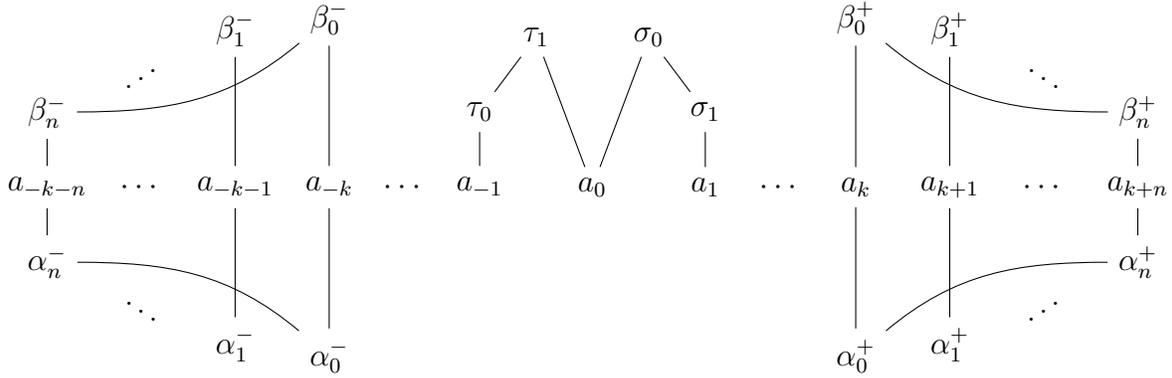
\begin{figure}[h]
            \centering
            \begin{tikzpicture}
                \node (t1) at (-0.75,2) {$\tau_{1}$};
                \node (t0) at (-1.5,1) {$\tau_{0}$};
                \node (s0) at (0.75,2) {$\sigma_{0}$};
                \node (s1) at (1.5,1) {$\sigma_{1}$};
                \node (a0) at (0,0) {$a_{0}$};
                \node (a-1) at (-1.5,0) {$a_{-1}$};
                \node (a1) at (1.5,0) {$a_{1}$};
                \node (a-2) at (-2.5,0) {$\cdots$};
                \node (a2) at (2.5,0) {$\cdots$};
                \node (a-3) at (-3.5,0) {$a_{-k}$};
                \node (a3) at (3.5,0) {$a_{k}$};
                \node (a-4) at (-4.75, 0) {$a_{-k-1}$};
                \node (a4) at (4.75, 0) {$a_{k+1}$};
                \node (a-5) at (-6, 0) {$\cdots$};
                \node (a5) at (6, 0) {$\cdots$};
                \node (a-6) at (-7.25, 0) {$a_{-k-n}$};
                \node (a6) at (7.25, 0) {$a_{k+n}$};
                \node (a0+) at (3.5, -2.25) {$\alpha_{0}^{+}$};
                \node (b0+) at (3.5, 2.25) {$\beta_{0}^{+}$};
                \node (a0-) at (-3.5, -2.25) {$\alpha_{0}^{-}$};
                \node (b0-) at (-3.5, 2.25) {$\beta_{0}^{-}$};
                \node (a1+) at (4.75, -2.1) {$\alpha_{1}^{+}$};
                \node (b1+) at (4.75, 2.1) {$\beta_{1}^{+}$};
                \node (a1-) at (-4.75, -2.1) {$\alpha_{1}^{-}$};
                \node (b1-) at (-4.75, 2.1) {$\beta_{1}^{-}$};
                \node (a2+) at (6, -1.55) {\reflectbox{$\ddots$}};
                \node (b2+) at (6, 1.55) {$\ddots$};
                \node (a2-) at (-6, -1.55) {$\ddots$};
                \node (b2-) at (-6, 1.55) {\reflectbox{$\ddots$}};
                \node (a3+) at (7.25, -1) {$\alpha_{n}^{+}$};
                \node (b3+) at (7.25, 1) {$\beta_{n}^{+}$};
                \node (a3-) at (-7.25, -1) {$\alpha_{n}^{-}$};
                \node (b3-) at (-7.25, 1) {$\beta_{n}^{-}$};
                \draw (a-1) -- (t0) -- (t1) -- (a0) -- (s0) -- (s1) -- (a1);
                \draw (a0-) to[out=140,in=0] (a3-);
                \draw (a0+) to[out=40,in=180] (a3+);
                \draw (b0-) to[out=220,in=0] (b3-);
                \draw (b0+) to[out=-40,in=180] (b3+);
                \draw (a0+) -- (a3) -- (b0+);
                \draw (a1+) -- (a4) -- (b1+);
                \draw (a3+) -- (a6) -- (b3+);
                \draw (a0-) -- (a-3) -- (b0-);
                \draw (a1-) -- (a-4) -- (b1-);
                \draw (a3-) -- (a-6) -- (b3-);
            \end{tikzpicture}
            \caption{Blueprint for the future home of $\gamma' \paren{x}$, and some of the data needed to determine its $\mathbf{b}^{g}$-sequences.}
            \label{fig:DetermineInfiniteSpiral}
        \end{figure}

        We would like to amalgamate $Q$ with the points shown in \Cref{fig:DetermineInfiniteSpiral}, but we cannot guarantee that (for example) $a_{k}$ and $a_{k+n}$ have the same type over $\ran{h}$, so adding the spirals may create new relations between points of $\ran{h}$. We solve this problem by now only considering a finite fragment of $Q$. We obtain the finite poset $Q'$ by amalgamating the sets $$\dur{p} \cup \brace{a_{i} : \absval{i} \leq k+n} \cup \brace{\sigma_{i},\tau_{i} : i=0,1} \cup \brace{c^{x'},d^{x'},e^{x'} : x' \in \dom{\gamma}}$$ and $$\brace{a_{i} : \absval{i} \leq k+n} \cup \paren{\ran{h} \cap \dur{p}} \cup \brace{\alpha_{i}^{+},\alpha_{i}^{-},\beta_{i}^{+},\beta_{i}^{-} : 0 \leq i \leq n}$$ over the common subset $\brace{a_{i} : \absval{i} \leq k+n} \cup \paren{\ran{h} \cap \dur{p}}$.
        
        We now claim $q$ is a partial automorphism on $Q'$, where:
        $$q \paren{z} := \begin{cases}
        p \paren{z} & \text{if $z \in \dom{p}$;}\\
        a_{i+1} & \text{if $z = a_{i}$ for $-k-n \leq i < k+n$;}\\
        z & \text{if $z = c^{x'},d^{x'},e^{x'}$;}\\
        \sigma_{1} & \text{if $z = \sigma_{0}$;}\\
        \tau_{1} & \text{if $z = \tau_{0}$;}\\
        \alpha_{i+1}^{+} & \text{if $z = \alpha_{i}^{+}$ for some $0 \leq i < n$;}\\
        \beta_{i+1}^{+} & \text{if $z = \beta_{i}^{+}$ for some $0 \leq i < n$;}\\
        \alpha_{i-1}^{-} & \text{if $z = \alpha_{i}^{-}$ for some $0 < i \leq n$;}\\
        \beta_{i-1}^{-} & \text{if $z = \beta_{i}^{-}$ for some $0 < i \leq n$.}
        \end{cases}$$
        Most of the cases to check are straightforward; the main case we must be careful of are the relations between the added tightening spirals and the points of $\dur{p}$. Let $0 \leq i < n$ and $y' \in \dom{p}$, such that $\alpha_{i}^{+} < y'$. By the amalgamation that defined $Q'$, it must be that either $\alpha_{i}^{+} < a_{j} < y'$ for some $\absval{j} \leq k+n$, or else $\alpha_{i}^{+} < y'' < y'$ for some $y'' \in \dur{p} \cap \ran{h}$.

        In the case where $\alpha_{i}^{+} < y'' < y'$ for some $y'' \in \dur{p} \cap \ran{h}$, then by the restriction on order-types, we must have $a_{k+i} < y''$. Then $a_{k+i} < y'' \iff f^{k+i} \paren{x} < h^{-1} \paren{y''} \iff f^{k+i+1} \paren{x} < f \paren{h^{-1} \paren{y''}} = h^{-1} \paren{g \paren{y''}} \iff a_{k+i+1} < g \paren{y''}$, and $y'' < y' \iff g \paren{y''} < g \paren{y'} = p \paren{y'}$. Thus, $\alpha_{i+1}^{+} < a_{k+i+1} < p \paren{y'}$.
        
        In the case where $\alpha_{i}^{+} < a_{j} < y'$, we must have either $j = k+i$, or if $i=0$, possibly $j = k+n$. If $j = k+i < k+n$, then $\alpha_{i+1}^{+} < a_{k+i+1}$. By the amalgamation that defined $Q$, we have $a_{k+i} <_{Q} y'$ implies $a_{k+i} < y'' < y'$ for some $y'' \in \ran{h}$. By the previous case, this implies $\alpha_{i}^{+} < a_{k+i+1} < p \paren{y'}$. Otherwise, if $i = 0$ and $j = k+n$, then $a_{k+n} < y'$ implies there is $y'' \in \dur{p} \cap \ran{h}$ with $a_{k+n} < y'' < y'$. By the choice of $k$ and $n$, we must have $a_{k} < y''$ also, and so $\alpha_{0}^{+} < a_{k} < y'' < y'$. Then $\alpha_{1}^{+} < p \paren{y'}$ by the $j=k+i$ case.

        Essentially the same arguments show that $\alpha_{i+1}^{+} < p \paren{y'}$ implies $\alpha_{i}^{+} < y'$, so that $\alpha_{i}^{+} < y' \iff \alpha_{i+1}^{+} < p \paren{y'}$. One similarly shows that $y' < \alpha_{i}^{+} \iff p \paren{y'} < \alpha_{i+1}^{+}$, and also the corresponding statements for $\alpha_{i}^{-}$, $\beta_{i}^{+}$, and $\beta_{i}^{-}$. Thus $q$ is a partial automorphism.
        
        By universal-homogeneity, embed $\dur{q} \hookrightarrow \mathbf{P}$ fixing $\dur{p}$ pointwise. Extend $q$ to $p' \in \mathcal{P}$ determined, so that $g \in E \paren{p,p'}$ implies there exists $r \from \dur{p'} \hookrightarrow \mathbf{P}$, fixing $\dur{p}$ pointwise and such that $r \circ p' \circ r^{-1} \subseteq g$. Let $y := r \paren{a_{0}}$ and $\gamma' := \gamma \cup \brace{\paren{x,y}}$. Then:
        \begin{itemize}
            \item $y$, $r \paren{\sigma_{i}}$, $r \paren{\tau_{i}}$ for $i=0,1$ are arranged in an M, so $\sp{y,g} = \infty$ by \Cref{MLemma}. Thus $b_{i}^{g} \paren{y,y}$ holds iff $i = 0$, and so $\mathbf{b}^{g} \paren{y,y} = \mathbf{b}^{f} \paren{x,x}$.
            \item By \Cref{TighteningSpirals}, the $r \paren{\alpha_{i}^{+}}$'s, $r \paren{\beta_{i}^{+}}$'s, $r \paren{\alpha_{i}^{-}}$'s, and $r \paren{\beta_{i}^{-}}$'s are tightening spirals that witness that $\mathbf{b}^{g}_{\geq k} \paren{y, \gamma \paren{x'}}$ is $n$-periodic, and must agree with $\mathbf{b}^{f}_{\geq k} \paren{x,x'}$ on the full period; similarly for $\mathbf{b}^{g}_{\geq k} \paren{\gamma \paren{x'}, y}$, $\mathbf{b}^{g}_{\leq -k} \paren{y, \gamma \paren{x'}}$, and $\mathbf{b}^{g}_{\leq -k} \paren{\gamma \paren{x'}, y}$. Thus $\mathbf{b}^{g} \paren{y, \gamma \paren{x'}} = \mathbf{b}^{f} \paren{x',x}$ and $\mathbf{b}^{g} \paren{\gamma \paren{x'},y} = \mathbf{b}^{f} \paren{x',x}$ as desired.
        \end{itemize}
    \end{itemize}
    In all cases, $\gamma'$ is a partial $L$-isomorphism with $x \in \dom{\gamma'}$ as desired. This was ``forth'', and ``back'' is the same argument since the hypotheses on $f$ and $g$ are the same.
\end{proof}

\begin{thm}\label{SigmaIsGeneric}
    $\Sigma \subseteq \Gamma$. That is, each element of $\Sigma$ is generic.
\end{thm}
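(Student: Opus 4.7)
The strategy is to show that $\Sigma$ is a single conjugacy class in $\Aut{\mathbf{P}}$, which combined with $\Gamma \subseteq \Sigma$ (from \Cref{GenericIsInSigma}) and the fact that $\Gamma$ is itself a conjugacy class forces $\Sigma = \Gamma$. Each of the defining conditions \ref{ClassIsDense}--\ref{CanFitTighteningSpirals} of $\Sigma$ is conjugation-invariant: \ref{ClassIsDense} is topological, and \ref{Ultrahomogeneity}--\ref{CanFitTighteningSpirals} are statements about the $L$-structure $\mathbf{P}_{f}$, which is preserved by the induced $L$-isomorphism $\mathbf{P}_{f} \isom \mathbf{P}_{hfh^{-1}}$ for any $h \in \Aut{\mathbf{P}}$. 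Hence $\Sigma$ is conjugation-invariant and contains $\Gamma$.

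By \Cref{ConjugateIffLIsomorphic}, showing that $\Sigma$ is a single conjugacy class amounts to proving $\mathbf{P}_{f} \isom \mathbf{P}_{g}$ for all $f, g \in \Sigma$. The plan is to repeat the back-and-forth argument of \Cref{GenericPfIsUltrahomogeneous}, which extends a finite partial $L$-isomorphism $\gamma : \mathbf{P}_{f} \rightharpoonup \mathbf{P}_{g}$ one point at a time. That proof invokes genericity of $g$ (and, symmetrically, of $f$) in exactly two ways: through $g \in D \paren{g \dhr_{\ran \gamma}}$, to secure a determined $p \subseteq g$ with $\ran{\gamma} \subseteq \dur{p}$; and through $g \in E \paren{p,p'}$, to realize the auxiliary partial automorphism containing the candidate extension point $y$ inside $g$ itself. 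The main technical task is therefore to derive both consequences directly from $g \in \Sigma$.

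For the density consequence I would prove: for every $g \in \Sigma$ and every $p \in \mathcal{P}$ with $p \subseteq g$, there is a determined $q \in \mathcal{P}$ with $p \subseteq q \subseteq g$. Starting from \Cref{PropertiesOfSigma}\ref{SigmaDeterminesSingleOrbits}, which supplies $q_{0} \subseteq g$ extending $p$ and determining each diagonal pair $\paren{x,x}$ for $x \in \dur{q_{0}}$, one further extends $q_{0}$ inside $g$ finitely many times to determine each off-diagonal pair, mirroring the case analysis in \Cref{PropertiesOfSigma}\ref{AllSequencesEventuallyPeriodic}: include finite orbits when they exist; use property \ref{OrbitalIsRandom} together with \Cref{OrbitalIsDirectedEquivs} to realize the eventual constants of $\mathbf{b}^{g} \paren{x,y}$ and $\mathbf{b}^{g} \paren{y,x}$ via finitely many orbit points; and when $\sp{x,g} = \infty$ with $y \notin g^{\Z} \paren{x}$, invoke \ref{CanFitTighteningSpirals} to insert tightening spirals inside $g$ whose periodicity is then granted by \Cref{TighteningSpirals}. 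For the $E$-consequence, rather than reusing $g \in E \paren{\cdot,\cdot}$, I would place the witness $y \in \mathbf{P}$ directly using \Cref{PropertiesOfSigma}\ref{CanFitMs}, \ref{CanSeparateStrongLessWithFixedPts}, and \ref{CanSeparateIncomparableWithFixedPts} (the M-configurations and fixed-point separators control $\sp{y,g}$, $\parity{y,g}$, and the orbital orders with $\ran{\gamma}$), combined with \ref{Ultrahomogeneity} to realize the resulting finite $L$-configuration inside $\mathbf{P}_{g}$.

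The main obstacle is the infinite-spiral-length case in the $E$-consequence: one must produce $y \in \mathbf{P}$ with $\sp{y,g} = \infty$ whose $\mathbf{b}^{g}$-sequences to the previously placed points match a prescribed eventually periodic pattern, and do so without access to $E \paren{\cdot,\cdot}$ for $g$. This will require first forcing $\sp{y,g} = \infty$ via an M-configuration (\Cref{PropertiesOfSigma}\ref{CanFitMs}), then tightening around $g^{\Z} \paren{y}$ via \ref{CanFitTighteningSpirals} to lock in the desired periodicity --- essentially reproducing Case III of \Cref{GenericPfIsUltrahomogeneous}, but with every compatibility with $\ran{\gamma}$ verified from $\Sigma$-properties alone. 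Once both consequences are in hand, the back-and-forth of \Cref{GenericPfIsUltrahomogeneous} applies with ``generic'' replaced by ``in $\Sigma$'', yielding $\mathbf{P}_{f} \isom \mathbf{P}_{g}$ and hence $f$ and $g$ conjugate.
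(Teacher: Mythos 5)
Your outer architecture is genuinely different from the paper's, and it is sound: the paper proves directly that each $f \in \Sigma$ lies in every $D\paren{p}$ and $E\paren{p,p'}$, hence in $\Gamma$, whereas you show any two elements of $\Sigma$ are conjugate and then quote \Cref{GenericIsInSigma} together with the fact that $\Gamma$ is a nonempty conjugacy class; that reduction is valid. Be aware, though, that the technical core largely coincides with the paper's: your ``density consequence'' (every finite $p \subseteq g$ with $g \in \Sigma$ extends to a determined $q \subseteq g$) \emph{is} the bulk of the paper's proof, and the paper's $E\paren{p,p'}$ step is exactly ``\Cref{DenseClassImpliesUniversality} plus property \ref{Ultrahomogeneity}''. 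On that second point your plan is heavier than necessary: once you have a determined $p \subseteq f$ with $\dom{\gamma} \cup \brace{x} \subseteq \dur{p}$, you need not reproduce Case III of \Cref{GenericPfIsUltrahomogeneous} at all. Since $p$ is determined, $f \in \brack{p} \cap D\paren{p}$ trivially, so \Cref{DenseClassImpliesUniversality} (using property \ref{ClassIsDense} for $g$) embeds $\paren{\dur{p}, \mathbf{b}^{f}}$ into $\mathbf{P}_{g}$, and ultrahomogeneity \ref{Ultrahomogeneity} of $\mathbf{P}_{g}$ lets you compose with an automorphism of $\mathbf{P}_{g}$ so the embedding agrees with $\gamma$ on $\dom{\gamma}$; the image of $x$ is your $y$. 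Note in this connection that \ref{Ultrahomogeneity} alone never ``realizes'' a configuration --- realization is universality and comes from \ref{ClassIsDense} via \Cref{DenseClassImpliesUniversality}; ultrahomogeneity only moves around configurations already present.

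The genuine gap is in your sketch of the density consequence. You propose to determine an off-diagonal pair $\paren{x,y}$ by realizing the eventual constants of $\mathbf{b}^{g}\paren{x,y}$ and $\mathbf{b}^{g}\paren{y,x}$ ``via finitely many orbit points''. This fails exactly when a sequence is constant: if $\orb{g}{x} <_{g}^{s} \orb{g}{y}$ (so $\mathbf{b}^{g}\paren{x,y} = \mathbf{1}$) or $\orb{g}{x} \nleq_{g}^{w} \orb{g}{y}$ (so $\mathbf{b}^{g}\paren{x,y} = \mathbf{0}$), no finite restriction of $g$ to orbit points pins both infinite tails --- an extension of such a finite partial automorphism can always break (or create) comparability beyond the window. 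Here one must adjoin \emph{new} orbits: a fixed point between the orbitals (\Cref{PropertiesOfSigma}\ref{CanSeparateStrongLessWithFixedPts} with \Cref{FixedPointBetweenForcesStronglyLess}) or a blocking fixed point (\Cref{PropertiesOfSigma}\ref{CanSeparateIncomparableWithFixedPts} with \Cref{NLemma}); you cite these tools, but only under the $E$-consequence, where (as above) they are not needed. The orbit-point argument works only in the intermediate case $\orb{g}{x} \leq_{g}^{w} \orb{g}{y}$ but $\orb{g}{x} \nless_{g}^{s} \orb{g}{y}$, where the transition of the sequence occurs at a finite, locatable place. Finally, since determining pairs forces you to add new points (fixed points, tightening spirals, M-configurations), ``finitely many further extensions'' needs a termination argument: one must observe, as the paper does, that the added orbits are fixed points or have non-zero parity, that pairs involving fixed points are automatically determined, and that the remaining pairs are then settled by economical extensions, so the process closes off after finitely many rounds. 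With these repairs your route goes through, but they are precisely the content of the paper's own $D\paren{p}$ argument.
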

\begin{proof}
    Fix $p,p' \in \mathcal{P}$ determined with $p \subseteq p'$, and suppose $f \in \brack{p} \cap \Sigma$. Let $g \in \brack{p'}$; since $p$ is determined, and $f,g$ both extend $p$, we have $\id_{\dur{p}} \from \paren{\dur{p}, \mathbf{b}^{g}} \to \paren{\dur{p}, \mathbf{b}^{f}}$ is an isomorphism. Since $p'$ is determined, we have $g \in \brack{p'} \subseteq D \paren{p'}$, so by \Cref{DenseClassImpliesUniversality}, there is an embedding $r \from \paren{\dur{p'}, \mathbf{b}^{g}} \hookrightarrow \mathbf{P}_{f}$. By ultrahomogeneity of $\mathbf{P}_{f}$ (property \ref{Ultrahomogeneity}), we may assume $r$ is the identity on the common sub-$L$-structure $\dur{p}$. Then $r$ witnesses $f \in E \paren{p,p'}$.

    Let $p \in \mathcal{P}$; we show $f \in D \paren{p}$. First, we may assume without loss of generality (by economically extending) that $p$ agrees with $f$ on the set $$\brace{f^{i} \paren{x} \in \mathbf{P} : \textnormal{$\exists k, \ell \in \Z$ such that $k \leq i \leq \ell$ and $f^{k} \paren{x}, f^{\ell} \paren{x} \in \dur{p}$}}.$$
    This is so that $f$ ``sees'' that separate $p$-orbits really are separate. Consequently, if $y \in \dur{p} \cap f^{\Z} \paren{x}$, then $y = p^{k} \paren{x}$ for some $k$, i.e. $p$ is defined on the intermediate steps. (This extension is a one-time step, and all further extensions we make will preserve this notationally-convenient property.)
    
    Let $x,y \in \dur{p}$; we will extend $p$ to $q$ that determines $\paren{x,y}$ and $\paren{y,x}$.
    \begin{itemize}
        \item Case I: $y \in f^{\Z} \paren{x}$.\\
        By \Cref{PropertiesOfSigma}\ref{SigmaDeterminesSingleOrbits}, take $q \in \mathcal{P}$ so that $p \subseteq q \subseteq f$ and $q$ determines $\paren{x,x}$. Then $q$ determines $\paren{x,y}$ and $\paren{y,x}$ by \Cref{IsoOverPairOrbitInvariant}. Recall that we can take $q$ to be an economic extension of $p$ if $\sp{x,f} \neq \emptyset$, and $q$ needs only add two orbits of non-zero parity if $\sp{x,f} = \infty$.

        \item Case II: One of the orbits (say $f^{\Z} \paren{x}$) is an infinite antichain.\\
        By property \ref{CanFitTighteningSpirals}, there are $k^{+}, k^{-} \in \Z$ and pairs of points $\alpha^{+}, \beta^{+}$ and $\alpha^{-}, \beta^{-} \in \mathbf{P}$ that form tightening spirals around $f^{k^{+}} \paren{x}$ and $f^{k^{-}} \paren{x}$ respectively, with lengths $n^{+} \geq 1$ and $n^{-} \geq 1$ respectively. Let $q$ be the result of extending $p$ to agree with $f$ on the points $f^{i} \paren{x}$ for $\absval{i} \leq \absval{k^{+}} + \absval{k^{-}} + n^{+} + n^{-}$, the points $f^{i} \paren{\alpha^{+}}$ and $f^{i} \paren{\beta^{+}}$ for $0 \leq i < n$, and the points $f^{i} \paren{\alpha^{-}}$ and $f^{i} \paren{\beta^{-}}$ for $-n \leq i < 0$. Then for every $g \in \brack{q}$, the $\alpha$'s and $\beta$'s witness that $\mathbf{b}^{g}_{\geq -k^{-}} \paren{x,y}$, $\mathbf{b}^{g}_{\geq k^{+}} \paren{y,x}$, $\mathbf{b}^{g}_{\leq -k^{+}} \paren{x,y}$, and $\mathbf{b}^{g}_{\leq k^{-}} \paren{y,x}$ are all periodic (with either period $n^{+}$ or $n^{-}$ accordingly) and agree with the $\mathbf{b}^{f}$'s. Thus $q$ determines $\paren{x,y}$ and $\paren{y,x}$.
        
        \item Case III: One of the orbits (say $f^{\Z} \paren{y}$) has non-zero parity.\\
        Suppose $\parity{y,f} = +1$, as the negative parity case is similar.
        We separate into subcases:
        \begin{enumerate}[label=(\alph*)]
            \item $\orb{f}{x} <_{f}^{s} \orb{f}{y}$:\\
            By \Cref{PropertiesOfSigma}\ref{CanSeparateStrongLessWithFixedPts}, let $c$ be a fixed point with $x < c < y$ and set $q := p \cup \brace{\paren{c,c}}$. Then \Cref{FixedPointBetweenForcesStronglyLess} implies $\mathbf{b}^{f} \paren{x,y} = \mathbf{b}^{g} \paren{x,y} = \mathbf{1}$ and $\mathbf{b}^{f} \paren{y,x} = \mathbf{b}^{g} \paren{y,x} = \mathbf{0}$.
            
            \item $\orb{f}{x} \leq_{f}^{w} \orb{f}{y}$ but $\orb{f}{x} \nless_{f}^{s} \orb{f}{y}$:\\
            By \ref{OrbitalIsRandom}, $\mathbf{b}^{f} \paren{x,y}$ is eventually constant $1$ on the right, and eventually constant $0$ on the left. By \Cref{OrbitalIsDirectedEquivs}, take $N$ large enough so that $n \geq N$ implies $y \leq f^{n} \paren{y}$. Choose $M$ large enough so that $i \geq M$ implies $x \leq f^{i} \paren{y}$ and $x \nleq f^{-i} \paren{y}$. Then let $q := p \cup f \dhr_{\brace{f^{i} \paren{y} : \absval{i} \leq N + M + \sp{y,f}}}$. Let $g \in \brack{q}$; note that $b_{i}^{f} \paren{x,y} = b_{i}^{g} \paren{x,y}$ for all $\absval{i} \leq M + N$.
            
            Since $f^{i} \paren{y} = g^{i} \paren{y}$ for $0 \leq i \leq N + \sp{y,f}$, it follows that $\sp{y,f} = \sp{y,g}$ and $y \leq g^{n} \paren{y}$ for all $n \geq N$. Hence $n \geq N$ implies $x \leq f^{M} \paren{y} = g^{M} \paren{y} \leq g^{M+n} \paren{y}$, so $b_{M+n}^{g} \paren{x,y}$ holds. Thus, $\mathbf{b}_{\geq M+N}^{g} \paren{x,y} = \mathbf{b}_{\geq M+N}^{f} \paren{x,y} = \mathbf{1}$.
            
            Suppose for contradiction $b_{j}^{g} \paren{x,y}$ holds for some $j \leq -M-N$. Since $N \geq \sp{y,f} = \sp{y,g}$, there is $k \geq 0$ such that $-M-N < j + k\cdot \sp{y,g} \leq -M$. Then $x \leq g^{j} \paren{y} \leq g^{j + k \sp{y,g}} \paren{y} = f^{j + k \sp{y,f}} \paren{y}$, so $b_{j + k \sp{y,f}}^{f} \paren{x,y}$ holds, contradicting our choice of $M$. Hence $\mathbf{b}_{\leq -M-N}^{g} \paren{x,y} = \mathbf{b}_{\leq -M-N}^{f} \paren{x,y} = \mathbf{0}$, and so $\mathbf{b}^{g} \paren{x,y} = \mathbf{b}^{f} \paren{x,y}$, as desired.
            
            \item $\orb{f}{x} \nleq_{f}^{w} \orb{f}{y}$:\\
            By \Cref{PropertiesOfSigma}\ref{CanSeparateIncomparableWithFixedPts}, extend to $q := p \cup \brace{\paren{c,c}}$, where $c$ is a fixed point of $f$ with $y < c$ and $x \nless c$. Then \Cref{NLemma} implies $\mathbf{b}^{g} \paren{x,y} = \mathbf{0} = \mathbf{b}^{f} \paren{x,y}$ for all $g \in \brack{q}$.
        \end{enumerate}
        The above subcases describe how to extend $p$ to determine $\paren{x,y}$; a similar, appropriately symmetric consideration of subcases shows how to determine $\paren{y,x}$.
        
        \item Case IV: One of the orbits (say $f^{\Z} \paren{y}$) is a finite cycle.\\
        Let $q := p \cup f \dhr_{f^{\Z} \paren{y}}$. Then for every $g \in \brack{q}$ we have $g^{k} \paren{y} = f^{k} \paren{y}$ for every $k \in \Z$, so $\mathbf{b}^{g} \paren{x,y} = \mathbf{b}^{f} \paren{x,y}$ and $\mathbf{b}^{g} \paren{y,x} = \mathbf{b}^{f} \paren{y,x}$.
        
    \end{itemize}
    We must justify why this procedure terminates. Observe that Cases I and II only add orbits of non-zero parity and Case III can only add fixed points. Thus, we may list the points $x \in \dur{p}$ for which $\sp{x,f} = \infty$, and add finitely many orbits of non-zero parity to determine $\paren{x,y}$ and $\paren{y,x}$ for each such $x$ and each $y \in \dur{p}$ --- two orbits if $y = x$ by \Cref{PropertiesOfSigma}\ref{CanFitMs}, or four if $y \neq x$ by Case II above. Let $q \supseteq p$ be the result of these additions. Then, use Case III to determine $\paren{x,y}$ and $\paren{y,x}$ for all $x \in \dur{q}$ not previously listed and all $y \in \dur{q}$. Note that Case III, unlike Case II, can only extend economically or add fixed points. Also note that $\paren{x,c}$ and $\paren{c,x}$ are automatically determined if $c$ is a fixed point. Finally, apply \Cref{PropertiesOfSigma}\ref{SigmaDeterminesSingleOrbits} to economically extend $q$ to $q'$ which determines $\paren{y,y}$ for all $\sp{y,f} < \infty$. Then all pairs of $q'$-orbits are determined, hence $q'$ is determined. Moreover, $q' \subseteq f$, so $q'$ witnesses $f \in D \paren{p}$.
\end{proof}

This concludes our characterization:
\begin{cor}
    $f \in \Aut{\mathbf{P}}$ is generic if and only if $f \in \Sigma$.
\end{cor}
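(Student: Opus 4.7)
The plan is simply to combine the two preceding theorems. \Cref{GenericIsInSigma} gives the inclusion $\Gamma \subseteq \Sigma$, and \Cref{SigmaIsGeneric} gives the reverse inclusion $\Sigma \subseteq \Gamma$. Taken together these yield $\Gamma = \Sigma$, which is precisely the statement that $f \in \Aut{\mathbf{P}}$ is generic if and only if $f \in \Sigma$.

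In more detail: the generic conjugacy class $\Gamma$ was defined (in the proof sketch of \Cref{CAPJEPImpliesGenericsExist}) as the dense $G_{\delta}$ intersection $\bigcap_{p \in \mathcal{P}} D\paren{p} \cap \bigcap_{p \subseteq p'} E\paren{p,p'}$, and the Kuske--Truss theorem guarantees this is a single comeagre conjugacy class. The content of the corollary is that this abstractly-defined class coincides with the explicit set $\Sigma$ cut out by properties \ref{ClassIsDense}--\ref{CanFitTighteningSpirals}. For the forward direction, I would invoke \Cref{GenericIsInSigma}, noting that in the course of its proof each of the four properties is shown to determine a conjugacy-invariant comeagre set, so $\Sigma$ itself is comeagre and conjugacy-invariant, and must therefore contain the comeagre conjugacy class $\Gamma$. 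For the converse I would invoke \Cref{SigmaIsGeneric}, which directly verifies that any $f \in \Sigma$ lies in every basic set $D\paren{p}$ and $E\paren{p,p'}$ defining $\Gamma$.

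No real obstacle arises at this step, since the substance of the work has already been carried out in the two theorems; the corollary is a routine combination. If anything is worth emphasizing, it is the asymmetric character of the two inclusions: the forward direction is essentially a soft Baire-category argument using conjugacy invariance, while the reverse direction requires explicitly witnessing each $D\paren{p}$- and $E\paren{p,p'}$-membership by building a suitable partial automorphism inside the concrete $L$-structure $\mathbf{P}_{f}$. The properties defining $\Sigma$ were specifically engineered so that these witnesses are available.
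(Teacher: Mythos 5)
Your proposal is correct and matches the paper's own proof, which likewise just cites \Cref{GenericIsInSigma} for $\Gamma \subseteq \Sigma$ and \Cref{SigmaIsGeneric} for $\Sigma \subseteq \Gamma$. The extra commentary on the asymmetric nature of the two inclusions is accurate but not needed for this routine combination step.
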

\begin{proof}
    $\Gamma \subseteq \Sigma$ by \Cref{GenericIsInSigma}, and $\Sigma \subseteq \Gamma$ by \Cref{SigmaIsGeneric}.
\end{proof}

\subsection{Structure of the orbital quotient}\label{subsec:StructureOfTheOrbitalQuotient}

In \cite{Tru92}, Truss gave the following characterization of the generic automorphism of $\Q$. In our terminology, $f \in \Aut{\Q}$ is generic iff $\OQ{f}{\Q} \isom \Q$, and for each $\sigma \in \brace{-1,0,1}$, the set $\brace{\orb{f}{x} : \parity{x,f} = \sigma}$ is dense in $\OQ{f}{\Q}$.

One may ask if an analogous statement for $\mathbf{P}$ is true, keeping in mind that such a statement for $\mathbf{P}$ must also handle the new types of behavior not present in the linear order case --- namely, parity $0$ orbitals (finite and infinite antichains), and the distinction between $<^{w}$ and $<^{s}$.

Indeed, we are able to prove one direction of a suitable analogue. More precisely:
\begin{thm}\label{GenericOrbitalQuotientIsRandom}
    For the generic $f \in \Aut{\mathbf{P}}$, the strong order $<_{f}^{s}$ on $\OQ{f}{\mathbf{P}}$ is isomorphic to $\mathbf{P}$. Moreover, for every $\sigma \in \brace{-1,1}$ and $1 \leq n \leq \infty$, the sets $\brace{\orb{f}{x} : \parity{x,f} = \sigma}$ and $\brace{\orb{f}{x} : \parity{x,f} = 0 \wedge \sp{x,f} = n}$ are dense\footnote{When referring to subsets of a poset, we only ever mean dense \textit{in intervals}; we will never use the notion of dense sets which commonly appears in treatments of forcing.} in $\OQ{f}{\mathbf{P}}$.
\end{thm}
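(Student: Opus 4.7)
My approach leverages the characterization $\Gamma=\Sigma$ established above, so I verify the conclusion directly for $f\in\Sigma$. Since $(\OQ{f}{\mathbf{P}},<_{f}^{s})$ is countable, it is isomorphic to $\mathbf{P}$ precisely when it satisfies the \Fraisse{} extension property: for any finite collection of distinct orbitals $O_{1},\dots,O_{k}$ partitioned as $\{1,\dots,k\}=L\sqcup M\sqcup U$ with $O_{\ell}<_{f}^{s}O_{u}$ for all $\ell\in L$, $u\in U$, there is a further orbital $O$ strongly above $L$, strongly below $U$, and $<_{f}^{s}$-incomparable to $M$. Adding the freedom to prescribe the parity $\sigma\in\{-1,0,+1\}$ and, when $\sigma=0$, the spiral length $n\in\{1,2,\dots,\infty\}$ of $O$ simultaneously yields the density statements.

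Fix representatives $x_{i}\in O_{i}$. By \Cref{PropertiesOfSigma}\ref{SigmaDeterminesSingleOrbits} I extend $f\dhr_{\{x_{1},\dots,x_{k}\}}$ to a partial automorphism $p\subseteq f$ determining every $(x_{i},x_{i})$. For each pair $(\ell,u)\in L\times U$, \Cref{PropertiesOfSigma}\ref{CanSeparateStrongLessWithFixedPts} furnishes a fixed point $c_{\ell u}$ of $f$ with $x_{\ell}<c_{\ell u}<x_{u}$; for each $m\in M$, \Cref{PropertiesOfSigma}\ref{CanSeparateIncomparableWithFixedPts} furnishes auxiliary fixed points that will eventually witness, via \Cref{NLemma}, $<_{f}^{s}$-incomparability between $O$ and $O_{m}$. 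All these fixed points are absorbed into an extension of $p$ still contained in $f$.

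Next I build an auxiliary finite poset on top of $\dur{p}$ consisting of (i) a skeleton for the new orbital---a finite cycle when $\sigma=0$ and $n<\infty$, a single point flanked by an M-configuration (\Cref{MLemma}) when $\sigma=0$ and $n=\infty$, or a short orbit segment $y_{-N},\dots,y_{N}$ together with a pair of tightening spirals of length $n$ (\Cref{TighteningSpirals}) when $\sigma\ne 0$---and (ii) placement constraints putting this skeleton strictly above each $c_{\ell u}$ for $\ell\in L$, strictly below each $c_{\ell u}$ for $u\in U$, and weakly incomparable (outside of the designated $M$-gadgets) to each $x_{m}$. \Cref{StrongAmalgamationProperty} shows the amalgamation is a well-defined poset, and universal-homogeneity embeds it in $\mathbf{P}$ fixing $\dur{p}$ pointwise. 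The resulting $q\supseteq p$ lies in $\mathcal{P}$; extending to a determined $q'\supseteq q$ and invoking $f\in E(p,q')$, I obtain $r\in\mathcal{P}$ with $\dom{r}=\dur{q'}$, fixing $\dur{p}$ pointwise, such that $r\circ q'\circ r^{-1}\subseteq f$. Setting $y:=r(y_{0})$ for the designated base point $y_{0}$ of the skeleton, the transported gadgets force, via \Cref{FixedPointBetweenForcesStronglyLess,NLemma,MLemma,TighteningSpirals}, the correct parity, spiral length, and strong-order position for $\orb{f}{y}$.

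The principal difficulty is the case $\sigma\ne 0$: the finite skeleton must simultaneously record enough of the new orbit to pin down both parity and spiral length, carry tightening spirals synchronized with that spiral length, and sit coherently between the separating fixed points $c_{\ell u}$ without introducing spurious order relations on $\dur{p}$. This is a careful amalgamation argument of precisely the flavor carried out in Case II of the proof of \Cref{GenericPfIsUltrahomogeneous}; the periodicity consequences of \Cref{PropertiesOfSigma}\ref{AllSequencesEventuallyPeriodic} are what guarantee that the finite skeleton together with the flanking spirals determines the full $\mathbf{b}^{f}$-sequence of $y$ against every $x_{i}$.
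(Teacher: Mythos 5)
Your overall machinery (finite configuration, universal-homogeneity, determined extension, $f \in E \paren{p,q'}$, transport by $r$) is the same as the paper's, but the key device for pinning down the strong-order position of the new orbital does not work as written. Each $c_{\ell u}$ supplied by \Cref{PropertiesOfSigma}\ref{CanSeparateStrongLessWithFixedPts} is a single fixed point with $x_{\ell} < c_{\ell u} < x_{u}$, yet your placement constraints put the skeleton strictly above $c_{\ell u}$ (in its role for $\ell \in L$) and strictly below $c_{\ell u}$ (in its role for $u \in U$) --- contradictory whenever $L$ and $U$ are both nonempty. Even if you repair this by asking for two layers of pre-existing fixed points $c_{\ell} < d_{u}$ with $x_{\ell} < c_{\ell}$ and $d_{u} < x_{u}$, obtaining such a coherent grid from pairwise applications of \Cref{PropertiesOfSigma}\ref{CanSeparateStrongLessWithFixedPts} requires a further argument you have not given. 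The paper's orbital sandwich lemma (\Cref{Sandwich}) sidesteps all of this: it adjoins \emph{fresh} points $a_{x}$ and $b_{y}$ to the finite configuration with $x < a_{x} < Q < b_{y} < y$ and declares them fixed by the partial automorphism; after transport they are genuine fixed points of $f$, a fixed point above (below) a single point bounds that point's entire $f$-orbit, and \Cref{FixedPointBetweenForcesStronglyLess} then gives the strong-order position of every new orbital. Because the new orbit is confined by these ``toothpicks'', its internal structure is completely free, so the density statements come from plugging in an $n$-antichain with a cyclic permutation (parity $0$, spiral length $n$) or an ``M''-configuration with two short segments (infinite spiral length plus parities $\pm 1$); your tightening-spiral apparatus is unnecessary --- the theorem does not prescribe spiral lengths when $\sigma \neq 0$, and no $\mathbf{b}^{f}$-sequence needs to be determined, only parity and strong-order data.

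The treatment of $M$ also fails. You cannot apply \Cref{PropertiesOfSigma}\ref{CanSeparateIncomparableWithFixedPts} to the pair consisting of $O_{m}$ and the new orbital, since the latter does not yet exist and the lemma's hypothesis concerns the weak order between existing orbitals. Worse, weak incomparability with each $O_{m}$ is sometimes unattainable: $\gamma$ preserves only $<_{f}^{s}$, so nothing prevents $O_{m} \leq_{f}^{w} O_{\ell}$ for some $\ell \in L$, and then any orbital strongly above $O_{\ell}$ is automatically weakly above $O_{m}$, so no gadget can make the new orbital weakly incomparable to $O_{m}$. What the extension property actually needs is only $<_{f}^{s}$-incomparability, and that is forced by a single incomparable pair: if some skeleton point $w$ is unrelated to $x_{m}$ in the configuration and this survives transport, then $r \paren{w} \perp x_{m}$ already refutes both $\orb{f}{r \paren{w}} <_{f}^{s} \orb{f}{x_{m}}$ and its converse, which is how \Cref{Sandwich} handles $C$. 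Finally, a smaller misstep in the same vein: the $p$ you produce via \Cref{PropertiesOfSigma}\ref{SigmaDeterminesSingleOrbits} determines only the diagonal pairs $\paren{x_{i},x_{i}}$, so it need not be determined, whereas $f \in E \paren{p,q'}$ requires $p$ itself determined; one should instead start from a determined $p \subseteq f$ containing the relevant points, using $f \in D \paren{f \dhr_{\brace{x_{1},\ldots,x_{k}}}}$ as the paper does at the start of \Cref{Sandwich}.
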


This theorem is achieved with a standard back-and-forth argument. For the ``forth'' step, by the sandwich principle (\Cref{SandwichPrinciple}) it suffices to show that between any finite sets of desired immediate predecessors and immediate successors, there exists an orbital to which we can map the new point.
\begin{lem}[Orbital sandwich lemma]\label{Sandwich}
    Let $A$, $B$, and $C$ be finite subsets of $\mathbf{P}$, and let $f \in \Gamma$, satisfying the following:
    \begin{itemize}
        \item $x \not\sim_{f} x'$ and $y \not\sim_{f} y'$ whenever $x,x' \in A$ are distinct and $y,y' \in B$ are distinct;
        \item $\orb{f}{x} <_{f}^{s} \orb{f}{y}$ for each $x \in A$ and $y \in B$;
        \item $\orb{f}{z} \nleq_{f}^{w} \orb{f}{x}$ and $\orb{f}{y} \nleq_{f}^{w} \orb{f}{z}$ for each $x \in A$, $y \in B$, and $z \in C$.
    \end{itemize}
    Also, let $Q$ be a finite poset and $p_{Q}$ a finite partial automorphism of $Q$ such that $Q = \dur{p_{Q}}$. Then there is an isomorphic copy $Q' \subseteq \mathbf{P}$ of $Q$, and a corresponding $p_{Q'} \in \mathcal{P}$, such that:
    \begin{itemize}
        \item $p_{Q'} \subseteq f$;
        \item For all $x \in A$, $y \in B$, $z \in C$, and $w \in Q'$, we have $\orb{f}{x} <_{f}^{s} \orb{f}{w} <_{f}^{s} \orb{f}{y}$ and $\orb{f}{z} \perp_{f}^{s} \orb{f}{w}$.
    \end{itemize}
\end{lem}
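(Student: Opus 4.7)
The plan is to work inside $\mathbf{P}$ by constructing a finite partial automorphism $p'\in\mathcal{P}$ (not necessarily contained in $f$) whose $\dur{p'}$ houses fixed-point ``walls'' $c^-,c^+$ sandwiching a disjoint copy $Q''$ of $Q$ in the desired position relative to $A,B,C$, and then transporting this configuration into $f$ via the $E(p,p'')$ property of generic automorphisms. First, $f\in D(f\dhr_{A\cup B\cup C})$ yields a determined $p\in\mathcal{P}$ with $p\subseteq f$ and $A\cup B\cup C\subseteq\dur{p}$.

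I then adjoin two new points $c^-,c^+$ to $\dur{p}$, declaring $c^-<c^+$ and, for each $u\in\dur{p}$: put $c^\pm>u$ whenever $u\leq x$ for some $x$ lying in an $f$-orbital meeting $A$; put $c^\pm<u$ whenever $y\leq u$ for some $y$ in an $f$-orbital meeting $B$; and $c^\pm\perp u$ otherwise. The three hypotheses rule out each possible inconsistency: $\orb{f}{x}<_f^s\orb{f}{y}$ precludes simultaneous declarations $c^\pm>u$ and $c^\pm<u$, while $\orb{f}{z}\nleq_f^w\orb{f}{x}$ and $\orb{f}{y}\nleq_f^w\orb{f}{z}$ prevent any $z\in C$ from being forced below or above the walls. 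Each rule depends only on $u$'s $f$-orbital, hence is $p$-invariant, and by \Cref{UniversalHomogeneity} the finite extension realizes in $\mathbf{P}$ fixing $\dur{p}$; the map $p^+:=p\cup\{(c^-,c^-),(c^+,c^+)\}$ is then a partial automorphism of $\mathbf{P}$. Next I adjoin a disjoint copy $Q''$ of $Q$ strictly between the walls, declaring for each $w\in Q''$ and $u\in\dur{p^+}\setminus\{c^-,c^+\}$: $w>u$ when $u<c^-$, $w<u$ when $u>c^+$, and $w\perp u$ otherwise (consistent since $c^-<c^+$, and again $p^+$-invariant). Letting $p_{Q''}$ be the copy of $p_Q$ on $Q''$, the union $p':=p^+\cup p_{Q''}$ is in $\mathcal{P}$.

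Finally, I cofinally extend $p'$ to a determined $p''\in\mathcal{P}$ and invoke $f\in E(p,p'')$ to obtain $r\in\mathcal{P}$ with $\dom r=\dur{p''}$, $r$ fixing $\dur{p}$ pointwise, and $r\circ p''\circ r^{-1}\subseteq f$; then $Q':=r(Q'')$ and $p_{Q'}:=r\circ p_{Q''}\circ r^{-1}\subseteq f$ give the desired copy. Since $p''(c^\pm)=c^\pm$, the points $r(c^\pm)$ are fixed by $f$; and since $r$ is order-preserving and fixes $A\cup B\cup C$, for any $w'=r(w)\in Q'$ we have $x<r(c^-)<w'<r(c^+)<y$ for all $x\in A$ and $y\in B$, so \Cref{FixedPointBetweenForcesStronglyLess} yields $\orb{f}{x}<_f^s\orb{f}{w'}<_f^s\orb{f}{y}$. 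The incomparability $w\perp z$ declared during placement transports under order-preserving $r$ to $w'\perp z$, and a single incomparable pair suffices to defeat either strong orbital comparison, establishing $\orb{f}{z}\perp_f^s\orb{f}{w'}$. I expect the main obstacle to be the second step: confirming that the declared relations of $c^\pm$ with \emph{every} element of $\dur{p}$ (not merely with $A\cup B\cup C$) are simultaneously consistent, transitively closed, and $p$-invariant, which requires chasing chains through the elements of $\dur{p}$ lying outside $A\cup B\cup C$.
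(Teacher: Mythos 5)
Your proposal is correct and follows essentially the same route as the paper: take a determined $p \subseteq f$ containing $A \cup B \cup C$ via $D(\cdot)$, use universal-homogeneity to adjoin points that will become fixed points together with a copy of $Q$ sandwiched between them, pass to a determined extension, and transport into $f$ via $E(p,\cdot)$, concluding with \Cref{FixedPointBetweenForcesStronglyLess} and an incomparable pair for $C$. The only (cosmetic) difference is that you use two global ``walls'' $c^{\pm}$ whose relations to all of $\dur{p}$ are declared via the weak orbital order, where the paper uses one fixed point $a_{x}$, $b_{y}$ per element of $A \cup B$ and lets transitive closure generate the relations; the consistency check you flag at the end does go through exactly as you sketch, since the declared predecessor/successor sets of the walls are orbital-invariant and any forced relation $u < u'$ already holds in $\mathbf{P}$ by $\orb{f}{x} <_{f}^{s} \orb{f}{y}$.
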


Before giving the proof, we outline the idea. We wish to force a copy of $Q$ to lie in $\mathbf{P}$, where the action of $f$ on that copy agrees with the action of $p_{Q}$. Treating $Q$ as a ``point'', $A$ will be its desired immediate predecessors, $B$ will be the desired immediate successors, and $C$ will be points we wish to force to be incomparable to $Q$. We then apply the sandwich principle: $A$ and $B$ are the bread, the copy of $Q$ is the filling, and $C$ is the side of chips that we wish to keep separate from the sandwich. We accomplish this by adding fixed points between $A$, $Q$, and $B$: these are the toothpicks holding the sandwich together.

\begin{proof}
    Since $f \in \Gamma \subseteq D \paren{f\dhr_{A \cup B \cup C}}$, we let $p \in \mathcal{P}$ be determined, such that $p \subseteq f$ and $A \cup B \cup C \subseteq \dur{p}$. We define a new finite poset $R$ as follows:
    \begin{itemize}
        \item $R := \dur{p} \sqcup \brace{a_{x} : x \in A} \sqcup \brace{b_{y} : y \in B} \sqcup Q$;
        \item Elements of $\dur{p}$ are related in the same way as they were in $\mathbf{P}$, and elements of $Q$ are related the same way as they were in $Q$;
        \item $x <_{R} a_{x} <_{R} w <_{R} b_{y} <_{R} y$ for each $x \in A$, $y \in B$, and $w \in Q$.
    \end{itemize}
    Those elements of $\dur{p}$ not mentioned are not related to elements of $Q$ or the $a_{x}$'s, $b_{y}$'s unless they are forced to by transitivity. Notably, each $z \in C$ is not related to any $a_{x}$ or $b_{y}$.
    
    By universal-homogeneity, there is an embedding $R \hookrightarrow \mathbf{P}$ that fixes $\dur{p}$ pointwise. For ease of notation, we identify $a_{x}$'s, $b_{y}$'s, and $Q$ with their embedded copies in $\mathbf{P}$. Then $p_{Q} \in \mathcal{P}$, and is consistent with $p$ since $Q \cap \dur{p} = \emptyset$.
    
    By cofinality, let $q$ be a determined extension of $$p \cup p_{Q} \cup \brace{\paren{a_{x}, a_{x}} : x \in A} \cup \brace{\paren{b_{y}, b_{y}} : y \in B} \in \mathcal{P}.$$
    Then $f \in E \paren{p,q}$ implies there is $r \in \mathcal{P}$ such that $\dom{r} = \dur{q}$, $r$ fixes $\dur{p}$ pointwise, and $r \circ q \circ r^{-1} \subseteq f$. We abbreviate $a_{x}' := r \paren{a_{x}}$ for $x \in A$, $b_{y}' := r \paren{b_{y}}$ for $y \in B$, and $w' := r \paren{w}$ for $w \in Q$.
    
    We claim $Q' := r \brack{Q}$ and $p_{Q'} := r \circ p_{Q} \circ r^{-1}$ are as described in the theorem. First, note that $f \paren{a_{x}'} = \paren{r \circ q \circ r^{-1}} \paren{r \paren{a_{x}}} = r \paren{q \paren{a_{x}}} = a_{x}'$ for $x \in A$. Thus, $\orb{f}{a_{x}'} = \brace{a_{x}'}$; since also $x = r \paren{x} < r \paren{a_{x}}$, we have $\orb{f}{x} <_{f}^{s} \orb{f}{a_{x}'}$. Similarly, $f \paren{b_{y}'} = b_{y}'$, so $\orb{f}{b_{y}'} = \brace{b_{y}'} <_{f}^{s} \orb{f}{y}$ for $y \in B$. Moreover, for any $c \in Q$, since $a_{i} < w < b_{j}$, we have $a_{i}' < w' < b_{j}'$ and so $\orb{f}{x} <_{f}^{s} \orb{f}{a_{x}'} <_{f} \orb{f}{w'} <_{f} \orb{f}{b_{y}'} <_{f} \orb{f}{y}$. Also, for $c \in C$ and for $w \in Q$, $w \perp c$ implies $w' \perp c$ implies $\orb{f}{w'} \perp \orb{f}{c}$ by \Cref{OrbitalOrderCapturedByDeterm}.
\end{proof}

\begin{proof}[Proof of \Cref{GenericOrbitalQuotientIsRandom}]
    We show back-and-forth equivalence. ``Back'' is immediate from universal-homogeneity of $\mathbf{P}$, so we only prove ``forth''.
    
    Let $\gamma \from \mathbf{P} \rightharpoonup \OQ{f}{\mathbf{P}}$ be a finite partial isomorphism, and let $\ast \in \mathbf{P} \setminus \dom{\gamma}$. Let $\overline{A} \subseteq \dom{\gamma}$ be the set of immediate predecessors of $\ast$, let $\overline{B} \subseteq \dom{\gamma}$ be the set of immediate successors of $\ast$, and let $\overline{C} \subseteq \dom{\gamma}$ be the set of points incomparable to $\ast$. Let $T$ be a transversal of $\ran{\gamma}$, so that $\ran{\gamma} = \brace{\orb{f}{x} : x \in T}$. Also let $A,B,C \subseteq T$ such that $\gamma \brack{\overline{A}} = \brace{\orb{f}{x} : x \in A}$, $\gamma \brack{\overline{B}} = \brace{\orb{f}{y} : y \in B}$, and $\gamma \brack{\overline{C}} = \brace{\orb{f}{z} : z \in C}$. It follows that $A$, $B$, and $C$ satisfy the hypotheses for the orbital sandwich lemma (\Cref{Sandwich}). Letting $Q$ be any non-empty finite poset and $p_{Q}$ any partial automorphism, let $Q'$ and $p_{Q'}$ be as given by the lemma. Then for any $w \in Q'$ and all $x \in A$, $y \in B$, and $z \in C$ we have $\orb{f}{x} <_{f}^{s} \orb{f}{w} <_{f}^{s} \orb{f}{y}$, and $\orb{f}{w} \perp_{f}^{s} \orb{f}{z}$. By the sandwich principle, this implies $\gamma' := \gamma \cup \brace{\paren{\ast, \orb{f}{w}}}$ is a partial isomorphism. This proves ``forth'', and it follows that $\OQ{f}{\mathbf{P}} \isom \mathbf{P}$.

    We now show all types of orbitals are dense. Let $x,y \in \mathbf{P}$ such that $\orb{f}{x} <_{f}^{s} \orb{f}{y}$, and define the following finite posets and partial automorphisms:
    \begin{itemize}
        \item Let $Q_{\infty}$ have the following configuration:
        \begin{center}
            \begin{tikzpicture}
                \node (fy) at (-1,1) {$b_{1}$};
                \node (y) at (-2,0) {$b_{0}$};
                \node (x) at (0,0) {$a_{0}$};
                \node (z) at (1,1) {$c_{0}$};
                \node (fz) at (2,0) {$c_{1}$};
                \node (fx) at (2,-1.3) {$a_{1}$};
                \draw (y) -- (fy) -- (x) -- (z) -- (fz) -- (fx);
            \end{tikzpicture}
        \end{center}
        Then let $p_{Q_{\infty}} := \brace{\paren{a_{0}, a_{1}}, \paren{b_{0}, b_{1}}, \paren{c_{0}, c_{1}}}$;
        \item For $1 \leq n < \infty$, let $Q_{n} := \brace{d_{0}, \ldots, d_{n-1}}$ be an antichain of $n$ points, and $p_{Q_{n}} := \brace{\paren{d_{i}, d_{i+1}} : 0 \leq i < n}$ (where $d_{n} := d_{0}$).
    \end{itemize}
    By the orbital sandwich lemma (with $A = \brace{x}$, $B = \brace{y}$, and $C = \emptyset$), each of these finite partial automorphisms are realized by the action of $f$ on orbitals strictly between $\orb{f}{x}$ and $\orb{f}{y}$. In particular, each $p_{Q_{n}}$ gives an orbital of parity $0$ and spiral length $n$, and $p_{Q_{\infty}}$ gives an orbital of infinite spiral length, as well as orbitals of positive and negative parity (by considering $\orb{f}{a_{0}}$, $\orb{f}{b_{0}}$, and $\orb{f}{c_{0}}$ respectively).
\end{proof}

\subsection{Model-theoretic considerations}\label{subsec:ModelTheoreticConsiderations}

One may hope that other model-theoretic properties of $\mathbf{P}_{f}$ could have been used to classify the generic $f \in \Aut{\mathbf{P}}$. Unfortunately, beyond the universality and ultrahomogeneity displayed in \cref{subsec:CharacterizingGenericAutomorphisms}, the generic $\mathbf{P}_{f}$ fails to satisfy many of the common desirable model-theoretic properties. The main obstruction is essentially the following, which says orbitals of non-zero parity are rich enough to ``approximate'' points of infinite spiral length.

\begin{thm}\label{NonZeroParityHasAllSpiralLengths}
    For generic $f \in \Aut{\mathbf{P}}$, for all $x \in \mathbf{P}$ with $\parity{x,f} \neq 0$, and for all $1 \leq N < \infty$, there is $x' \sim_{f} x$ such that $\sp{x',f} = N$.
\end{thm}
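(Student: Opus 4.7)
The plan is to leverage the universality (via \Cref{DenseClassImpliesUniversality}) and ultrahomogeneity of $\mathbf{P}_{f}$ to transplant a configuration realized in some auxiliary partial automorphism onto the point $x$ itself. Assume $\parity{x,f} = +1$ and set $n := \sp{x,f}$; the $\parity{x,f} = -1$ case is symmetric.

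First, by \Cref{PropertiesOfSigma}\ref{SigmaDeterminesSingleOrbits}, pick $p \in \mathcal{P}$ with $p \subseteq f$ which determines $\paren{x,x}$; after an economical extension, I may assume $\dur{p} \supseteq \brace{f^{i} \paren{x} : -M \leq i \leq M+N}$ for some $M > N$. Next I construct $q \supseteq p$ adding a fresh orbit of spiral length $N$ inside the orbital of $x$. Using \Cref{UniversalHomogeneity}, augment $\mathbf{P}$ with points $a_{0}, a_{1}, \ldots, a_{N}$ subject to:
\begin{itemize}
    \item $f^{i-M} \paren{x} < a_{i} < f^{i+M} \paren{x}$ for each $0 \leq i \leq N$;
    \item $a_{0} < a_{N}$, and $a_{i} \perp a_{j}$ for every other pair of distinct $i,j \in \brace{0,\ldots,N}$;
    \item for $y \in \dur{p} \setminus f^{\Z}\paren{x}$, set $a_{i} \leq y \iff f^{i+M} \paren{x} \leq y$ and $y \leq a_{i} \iff y \leq f^{i-M} \paren{x}$.
\end{itemize}
Define $q := p \cup \brace{\paren{a_{i}, a_{i+1}} : 0 \leq i < N}$. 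Order-preservation of $q$ follows from the shift-invariance of the prescribed relations, which mirrors the action of $f$ on the orbit of $x$. For any $g \in \brack{q}$, iterating $q$ yields $g^{i}\paren{a_{0}} = a_{i}$ for $0 \leq i \leq N$, whence $\sp{a_{0},g} = N$ with positive parity; moreover $a_{0} \sim_{g} x$ via the chain $g^{-M} \paren{x} = f^{-M} \paren{x} \leq a_{0} \leq f^{M} \paren{x} = g^{M} \paren{x}$.

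Now choose any $g \in \brack{q} \cap D \paren{q}$ (nonempty since $\brack{q}$ is nonempty open and $D \paren{q}$ is dense). By \Cref{DenseClassImpliesUniversality}, there is an $L$-embedding $\phi \from \paren{\dur{q}, \mathbf{b}^{g}} \hookrightarrow \mathbf{P}_{f}$. Writing $\tilde{x} := \phi \paren{x}$ and $\tilde{x}' := \phi \paren{a_{0}}$, the $L$-definability of $\sim$, $\sp{\cdot,\cdot}$, and parity (\Cref{BDefinableRelations}) gives $\tilde{x}' \sim_{f} \tilde{x}$ with $\sp{\tilde{x}',f} = N$. Crucially, since $p$ determines $\paren{x,x}$ and both $f, g$ extend $p$, we have $\mathbf{b}^{f} \paren{\tilde{x}, \tilde{x}} = \mathbf{b}^{g} \paren{x,x} = \mathbf{b}^{f} \paren{x,x}$. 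Thus $\brace{\paren{\tilde{x}, x}}$ is a partial $L$-isomorphism of $\mathbf{P}_{f}$, which by ultrahomogeneity (property \ref{Ultrahomogeneity} of $\Sigma$) extends to an $L$-automorphism $\psi$; the point $x' := \psi \paren{\tilde{x}'}$ then satisfies all the required conditions, since $\psi$ preserves $\sim_{f}$, $\sp{\cdot,f}$, and parity.

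The main obstacle will be verifying the mutual consistency of the relations defining the $a_{i}$'s. The delicate point is that the stipulated incomparabilities $a_{0} \perp a_{j}$ for $0 < j < N$ must survive transitive closure: any chain $a_{0} \leq f^{k} \paren{x} \leq a_{j}$ would force $k \geq M$ (for $a_{0} \leq f^{k} \paren{x}$) and simultaneously $k \leq -M + j$ (for $f^{k} \paren{x} \leq a_{j}$), which is impossible since $M > N > j$; the analogous verifications for the other incomparable pairs, and for chains through points $y \in \dur{p}$ outside the orbit of $x$, proceed by the same bookkeeping.
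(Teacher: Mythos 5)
There is a genuine gap in the construction of the auxiliary configuration: the choice ``for some $M > N$'' is not enough to make your prescribed poset consistent. Your stipulation $f^{i-M} \paren{x} < a_{i} < f^{i+M} \paren{x}$ forces, by transitivity, the relation $f^{i-M} \paren{x} < f^{i+M} \paren{x}$ between points that already lie in $\mathbf{P}$, i.e.\ it requires $b_{2M}^{f} \paren{x,x} = 1$; likewise the chain $y \leq a_{0} < a_{N} \leq y'$ together with your third bullet requires $b_{N+2M}^{f} \paren{x,x} = 1$ to avoid imposing new relations between elements of $\dur{p}$. For a point of parity $+1$ and spiral length $n$, these entries are guaranteed only when the exponent is a multiple of $n$ or lies beyond the threshold where $\mathbf{b}_{\geq 0}^{f} \paren{x,x}$ becomes constant $1$; for an arbitrary $M > N$ they can fail, and then no embedding over $\dur{p}$ exists and $q$ is not even well defined as a partial automorphism. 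The missing ingredient is precisely property \ref{OrbitalIsRandom} of $\Sigma$ (equivalently \Cref{OrbitalIsDirectedEquivs}): you must choose $M$ large enough that the relevant positive-index entries of $\mathbf{b}^{f} \paren{x,x}$ (and of $\mathbf{b}^{f} \paren{x,y}$, $\mathbf{b}^{f} \paren{y,x}$ for $y \in \dur{p}$) have stabilized. This is exactly the ``choose $k$ large enough'' step in the paper's proof, and it is also why the paper argues in two stages --- first producing a point of spiral length $1$ in the orbital, then building the length-$N$ antichain over a linearly ordered orbit, where the shift-invariance checks are painless. Relatedly, your claim that order-preservation ``follows from shift-invariance'' needs care at the boundary: e.g.\ $a_{0} < f^{N+M} \paren{x}$ is forced through $a_{N}$, and if $f^{N+M} \paren{x}$ lies in $\dom{q}$ this demands $a_{1} < f^{N+M+1} \paren{x}$, which is only forced when $b_{N}^{f} \paren{x,x} = 1$; you must arrange the orbit segment in $\dur{p}$ so that such pairs do not both lie in $\dom{q}$. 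Your closing paragraph flags only the incomparability bookkeeping, which is the easy half (for parity $+1$ all negative-index entries vanish automatically); the comparability side is where the genericity hypothesis actually enters.

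By contrast, your endgame is fine and genuinely different from the paper's: instead of extending $q$ to a determined $p'$ and using $f \in E \paren{p,p'}$ to conjugate the configuration back over $\dur{p}$ (landing the new point directly in $\orb{f}{x}$), you pass to some $g \in \brack{q} \cap D \paren{q}$, embed $\paren{\dur{q}, \mathbf{b}^{g}}$ into $\mathbf{P}_{f}$ via \Cref{DenseClassImpliesUniversality}, and then use determinedness of $\paren{x,x}$ plus ultrahomogeneity of $\mathbf{P}_{f}$ to move the image of $a_{0}$ into $\orb{f}{x}$. That route is valid (and there is no circularity, since \Cref{GenericPfIsUltrahomogeneous} does not depend on this theorem), but it leans on the heavy properties \ref{ClassIsDense} and \ref{Ultrahomogeneity} of the generic, whereas the paper's argument uses only the defining sets $D$ and $E$; once the choice of $M$ is repaired as above, your proof would go through.
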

\begin{proof}
    Assume without loss of generality $\parity{x,f} = +1$, since the case for negative parity is symmetric.
    
    We first show the case $N = 1$. Suppose $\sp{x,f} = n$. By genericity, let $p \in \mathcal{P}$ be determined such that $f \dhr_{\brace{f^{i} \paren{x} : 0 \leq i < n}} \subseteq p \subseteq f$. Furthermore, let $k$ be large enough so that $\mathbf{b}_{\geq k} \paren{y,x}$ and $\mathbf{b}_{\leq -k} \paren{x,y}$ are constant for all $y \in \dur{p}$. Replace $p$ with $p \cup f \dhr_{\brace{f^{i} \paren{x} : 0 \leq i < k+n}}$; this is an economical extension of $p$, so it remains determined. (We abuse notation and let $p$ be this extension instead.) Then we define the poset on $\dur{p} \sqcup \brace{a_{0}, a_{1}}$ by letting $a_{i}$ and $f^{k+i} \paren{x}$ have the same order-types over $\dur{p} \setminus f^{\Z} \paren{x}$ for $i=0,1$, and letting $f^{k} \paren{x} < a_{0} < f^{k+n} \paren{x}$, $f^{k+1} \paren{x} < a_{1}$, and $a_{0} < a_{1}$ as in \Cref{fig:HasLength1Spiral}.
    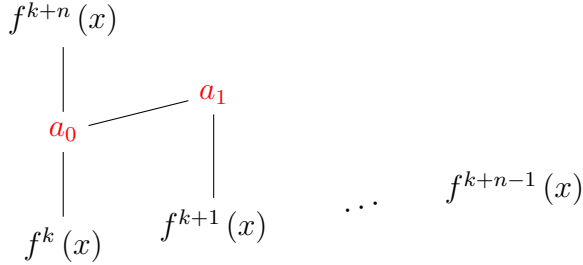
\begin{figure}[h!]
        \centering
        \begin{tikzpicture}
            \node (x) at (0,-1.5) {$f^{k} \paren{x}$};
            \node (fx) at (2,-1.25) {$f^{k+1} \paren{x}$};
            \node (f2x) at (4,-1) {\rotatebox[origin=c]{15}{$\cdots$}};
            \node (f3x) at (6,-0.75) {$f^{k+n-1} \paren{x}$};
            \node (f4x) at (0,1.5) {$f^{k+n} \paren{x}$};
            \node (a0) at (0,0) {$\color{red}a_{0}$};
            \node (a1) at (2,0.5) {$\color{red}a_{1}$};
            \draw (x) -- (a0) -- (f4x);
            \draw (a1) -- (fx);
            \draw (a0) -- (a1);
        \end{tikzpicture}
        \caption{Forcing $\orb{f}{x}$ to contain a point with spiral length $1$.}
        \label{fig:HasLength1Spiral}
    \end{figure}
    
    By choice of $k$, $f^{k'} \paren{x}$ and $f^{k} \paren{x}$ have the same order-types over $\dur{p} \setminus f^{\Z} \paren{x}$ for all $k' \geq k$; thus, the addition of the $a_{i}$'s does not create new relations between points of $\dur{p}$. Moreover, the map $p \cup \brace{\paren{a_{0}, a_{1}}}$ is a partial automorphism.
    
    Then by universal-homogeneity, the $a_{i}$'s may be realized as points in $\mathbf{P}$, and $p \cup \brace{\paren{a_{0}, a_{1}}} \in \mathcal{P}$. Let $p' \in \mathbf{P}$ be a determined extension of $p \cup \brace{\paren{a_{0}, a_{1}}}$; by $f \in E \paren{p,p'}$, take $r \in \mathcal{P}$ such that $\dom{r} = \dur{p'}$, $r$ fixes $\dur{p}$ pointwise, and $r \circ p' \circ r^{-1} \subseteq f$. Then we claim $x' := r \paren{a_{0}}$ is the desired point. Indeed, we have $f^{k} \paren{x} < r \paren{a_{0}} < f^{k+n} \paren{x}$ and so $r \paren{a_{0}} \sim_{f} x$, and $f \paren{r \paren{a_{0}}} = r \paren{p' \paren{a_{0}}} = r \paren{a_{1}} > r \paren{a_{0}}$ implies $\sp{r \paren{a_{0}}, f} = 1$.
    
    Now suppose $\sp{x,f} = 1$ (so $f^{\Z} \paren{x}$ is linearly ordered); we show how to obtain $x' \sim_{f} x$ with any given spiral length $N$. As above, choose $k$ large enough so that $\mathbf{b}_{\geq k} \paren{y,x}$ and $\mathbf{b}_{\leq -k} \paren{x,y}$ are constant for all $y \in \dur{p}$, and replace $p$ with $p \cup f \dhr_{\brace{f^{i} \paren{x} : 0 \leq i < k+N}}$. Then define the poset $\dur{p} \sqcup \brace{a_{i} : 0 \leq i \leq N}$ by letting $a_{i}$ and $f^{k+i} \paren{x}$ have the same type over $\dur{p} \setminus f^{\Z} \paren{x}$, and letting $f^{k+i} \paren{x} < a_{i}$, $a_{0} < f^{k+N} \paren{x}$, and $a_{i} \perp a_{j}$ for $0 < \absval{i-j} < N$ as in \Cref{fig:HasLengthNSpiral}.
    \begin{figure}[h!]
        \centering
        \begin{tikzpicture}
            \node (x) at (0,0) {$f^{k} \paren{x}$};
            \node (fx) at (0,1.25) {$f^{k+1} \paren{x}$};
            \node (f2x) at (0,2.5) {$f^{k+2} \paren{x}$};
            \node (f3x) at (0,3.75) {$\vdots$};
            \node (f4x) at (0,5) {$f^{k+N} \paren{x}$};
            \node (a0) at (-2.5,1.25) {$\color{red}a_{0}$};
            \node (a1) at (-2.5,2.5) {$\color{red}a_{1}$};
            \node (a2) at (-2.5,3.75) {$\color{red}a_{2}$};
            \node (a3) at (-2.5,5.25) {$\vdots$};
            \node (a4) at (-2.5,6.25) {$\color{red}a_{N}$};
            \draw (x) -- (fx) -- (f2x) -- (f3x) -- (f4x);
            \draw (x) -- (a0) -- (f4x) -- (a4);
            \draw (fx) -- (a1);
            \draw (f2x) -- (a2);
        \end{tikzpicture}
        \caption{Forcing $\orb{f}{x}$ to contain a point with spiral length $N$.}
        \label{fig:HasLengthNSpiral}
    \end{figure}
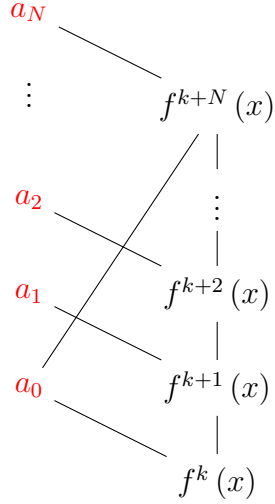
    
    The rest of the argument proceeds similarly to the $N = 1$ case: by choice of $k$, we may realize the $a_{i}$'s as elements of $\mathbf{P}$, and $p \cup \brace{\paren{a_{i},a_{i+1}} : 0 \leq i < N} \in \mathcal{P}$. Take $p'$ a determined extension of this partial automorphism, and by $f \in E \paren{p,p'}$, take $r \in \mathcal{P}$ accordingly. Then $x' := r \paren{a_{0}}$ works, since $f^{k} \paren{x} < r \paren{a_{0}} < f^{k+N} \paren{x}$ implies $r \paren{a_{0}} \sim_{f} x$, and moreover $\sp{r \paren{a_{0}}, f} = N$.
\end{proof}

\begin{lem}\label{LDefinableRelationsInSigma}
    For generic $f \in \Aut{\mathbf{P}}$, the following are $0$-definable relations in $\mathbf{P}_{f}$:
    \begin{enumerate}
        \item $\sp{x,f} = \infty$;
        \item $\orb{f}{x} \leq_{f}^{w} \orb{f}{y}$;
        \item $\orb{f}{x} <_{f}^{s} \orb{f}{y}$;
        \item $x \sim_{f} y$.
    \end{enumerate}
\end{lem}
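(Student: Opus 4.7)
The plan is to replace each a priori infinitary relation by a first-order formula in the language $L$, exploiting that both $\leq$ and $f$ are already $0$-definable by \Cref{BDefinableRelations}. The uniform strategy is: each target relation holds if and only if a certain finite configuration (an ``M'', a separating fixed point, etc.) exists. The ``if'' directions are already recorded as the sufficient combinatorial criteria \Cref{MLemma}, \Cref{FixedPointBetweenForcesStronglyLess}, and \Cref{NLemma}; their converses under genericity are supplied by the corresponding parts of \Cref{PropertiesOfSigma}.

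For (1), I will show that $\sp{x,f} = \infty$ is defined by the existence of an ``M''-configuration around $x$:
\begin{equation*}
    \exists y \, \exists z \paren{x < z \wedge f \paren{z} < z \wedge y < f \paren{y} \wedge x < f \paren{y} \wedge x \perp f \paren{z} \wedge x \perp y}.
\end{equation*}
Sufficiency is \Cref{MLemma}, and necessity (for generic $f$) is \Cref{PropertiesOfSigma}\ref{CanFitMs}. For (3), I will show analogously that $\orb{f}{x} <_{f}^{s} \orb{f}{y}$ is defined by the existence of a separating fixed point,
\begin{equation*}
    \exists c \paren{f \paren{c} = c \wedge x < c < y},
\end{equation*}
with sufficiency from \Cref{FixedPointBetweenForcesStronglyLess} and necessity from \Cref{PropertiesOfSigma}\ref{CanSeparateStrongLessWithFixedPts}.

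For (2), I will show that $\orb{f}{x} \leq_{f}^{w} \orb{f}{y}$ is defined by the \emph{two-sided} fixed-point condition
\begin{equation*}
    \forall c \paren{f \paren{c} = c \wedge c \leq x \to c \leq y} \;\wedge\; \forall c \paren{f \paren{c} = c \wedge y \leq c \to x \leq c}.
\end{equation*}
The forward direction is a one-line computation: if $f^{n} \paren{x} \leq y$ and $c$ is a fixed point with $c \leq x$, then $c = f^{n} \paren{c} \leq f^{n} \paren{x} \leq y$, and symmetrically on the other side. For the converse, I apply \Cref{PropertiesOfSigma}\ref{CanSeparateIncomparableWithFixedPts} to obtain fixed points $c$ (with $y < c$, $x \nless c$) and $d$ (with $d < x$, $d \nless y$), and verify by a short case analysis that in every configuration at least one of the two conjuncts is violated. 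Finally, (4) will follow immediately from (2) together with the antisymmetry of $\leq_{f}^{w}$ on $\OQ{f}{\mathbf{P}}$ (\Cref{OrbitalOrder}): $x \sim_{f} y$ iff $\orb{f}{x} \leq_{f}^{w} \orb{f}{y}$ and $\orb{f}{y} \leq_{f}^{w} \orb{f}{x}$.

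The main subtlety I anticipate is the backward direction of (2). The naive one-sided formula ``every fixed point $\leq x$ is also $\leq y$'' is insufficient: when $y$ is itself a fixed point with $y < x$, the witness $d$ provided by \Cref{PropertiesOfSigma}\ref{CanSeparateIncomparableWithFixedPts} may happen to coincide with $y$, in which case $d \leq y$ holds trivially and no violation arises. The remedy is the two-sided formulation --- in this degenerate case, plugging $c = y$ into the ``above'' conjunct demands $x \leq y$, which fails. The corresponding case split (distinguishing $d > y$ versus $d \perp y$ versus $d = y$, and similarly for $c$ on the other side) is routine, but it is the part that needs careful handling to ensure the characterization is exact.
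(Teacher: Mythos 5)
Your proposal is correct and is essentially the paper's own argument, which simply combines the sufficiency criteria (\Cref{MLemma}, \Cref{FixedPointBetweenForcesStronglyLess}, \Cref{NLemma}) with their converses in \Cref{PropertiesOfSigma} and then obtains (d) from antisymmetry of $\leq_{f}^{w}$ via \Cref{OrbitalOrder}. Your two-sided universal formula for $\leq_{f}^{w}$ is a harmless variant of the more direct choice (the negation of ``there is a fixed point $c$ with $y < c$ and $x \nless c$'', which already defines $\nleq_{f}^{w}$ by \Cref{PropertiesOfSigma}\ref{CanSeparateIncomparableWithFixedPts} and a one-line computation), and your handling of the degenerate case $d = y$ is sound.
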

\begin{proof}
    (a), (b), and (c) are immediate from \Cref{PropertiesOfSigma}. For (d), note $x \sim_{f} y$ iff both $\orb{f}{x} \leq_{f}^{w} \orb{f}{y}$ and $\orb{f}{y} \leq_{f}^{w} \orb{f}{x}$.
\end{proof}

\begin{thm}[Negative results]
    Let $f \in \Aut{\mathbf{P}}$ be generic.
    \begin{enumerate}
        \item $\operatorname{Th}_{L} \paren{\mathbf{P}_{f}}$ (that is, the $L$-theory of $\mathbf{P}_{f}$) is not $\omega$-categorical.
        \item $\mathbf{P}_{f}$ is not $\omega$-saturated.
        \item The relation $\sp{x,f} = \infty$ is not quantifier-free definable. Thus, $\operatorname{Th}_{L} \paren{\mathbf{P}_{f}}$ does not admit quantifier elmination.
    \end{enumerate}
\end{thm}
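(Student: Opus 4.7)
The plan is to handle the three assertions in turn, with part (c) as the main obstacle. For (a), I would apply the Ryll-Nardzewski theorem. By \Cref{BDefinableRelations}(b), each formula $\sp{x,f} = n$ (for $n \geq 1$) is $0$-definable in $\mathbf{P}_{f}$, and these formulas for distinct $n$ are pairwise inconsistent. By \Cref{GenericOrbitalQuotientIsRandom}, orbitals of parity $0$ and spiral length $n$ exist in $\mathbf{P}$ for every finite $n \geq 1$, so each such formula is consistent and extends to a complete $1$-type over $\emptyset$. Hence $\operatorname{Th}_{L}(\mathbf{P}_{f})$ admits infinitely many complete $1$-types over $\emptyset$, contradicting $\omega$-categoricity.

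For (b), I would exhibit a $1$-type over a single parameter that is finitely satisfiable but not realized. Fix $x \in \mathbf{P}$ with $\parity{x,f} = +1$, and consider the partial type (in the variable $y$)
$$p(y) := \brace{y \sim_{f} x} \cup \brace{\sp{y,f} \neq n : 1 \leq n < \infty}.$$
Both $y \sim_{f} x$ and each $\sp{y,f} = n$ are $0$-definable by \Cref{LDefinableRelationsInSigma}(d) and \Cref{BDefinableRelations}(b), so $p(y)$ is a legitimate partial type in $\mathbf{P}_{f}$. Finite satisfiability follows from \Cref{NonZeroParityHasAllSpiralLengths}: given any finite $F \subseteq \brace{1,2,\ldots}$, choose $N > \max F$ and the theorem produces $y \sim_{f} x$ with $\sp{y,f} = N$, which satisfies every finite sub-conjunction of $p$. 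On the other hand, by \Cref{ParityOrbitalInvariant} every $y \sim_{f} x$ has parity $+1$ and hence finite spiral length, so $p(y)$ is omitted.

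For (c), I would proceed by contradiction. Suppose $\phi(x)$ were a quantifier-free $L$-formula defining $\sp{x,f} = \infty$. Since the only symbols of $L$ are the binary $b_{i}$, and $\phi$ has a single free variable, $\phi$ is a Boolean combination of finitely many atomic formulas $b_{i}(x,x)$; let $M$ bound the $\absval{i}$'s appearing. The truth of $\phi(x)$ then depends only on the finite window $\mathbf{b}^{f}(x,x) \dhr \brack{-M,M}$. The key step is to exhibit two points whose $\mathbf{b}$-sequences agree on this window but whose spiral lengths differ. Using \Cref{GenericOrbitalQuotientIsRandom} (or directly universality of $\mathbf{P}$ combined with \Cref{MLemma}), pick $x_{\infty}$ with $\sp{x_{\infty},f} = \infty$, so that $b_{i}^{f}(x_{\infty},x_{\infty})$ holds iff $i = 0$. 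Likewise pick $x_{N}$ with $\parity{x_{N},f} = +1$ and $\sp{x_{N},f} = N > M$; then by the very definition of spiral length combined with orbit-invariance, $b_{i}^{f}(x_{N},x_{N})$ for $\absval{i} < N$ also holds iff $i = 0$. The two sequences therefore agree on $\brack{-M,M}$, forcing $\phi(x_{\infty}) \iff \phi(x_{N})$ and contradicting the defining property of $\phi$. Combining this with \Cref{LDefinableRelationsInSigma}(a), which says $\sp{x,f} = \infty$ is $0$-definable in $\mathbf{P}_{f}$, rules out quantifier elimination.

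The main obstacle is (c), where one must be precise about the very limited form of a quantifier-free $L$-formula in a single variable (only atomic formulas $b_{i}(x,x)$ are available, with no order symbol present), and then verify carefully that the finite windows of the witness sequences $\mathbf{b}^{f}(x_{\infty},x_{\infty})$ and $\mathbf{b}^{f}(x_{N},x_{N})$ genuinely coincide on $\brack{-M,M}$. The driving idea throughout is that \Cref{NonZeroParityHasAllSpiralLengths} (and its consequences via \Cref{GenericOrbitalQuotientIsRandom}) makes non-zero parity orbitals rich enough to locally mimic points of infinite spiral length, blocking any naive distinction between the two using only bounded quantifier-free data.
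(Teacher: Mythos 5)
Your proposal is correct, and parts (a) and (c) follow essentially the same route as the paper: (a) is Ryll--Nardzewski applied to the infinitely many distinct $1$-types isolated by the definable relations $\sp{x,f}=n$ (the paper gets witnesses from \Cref{NonZeroParityHasAllSpiralLengths}, you from \Cref{GenericOrbitalQuotientIsRandom} --- both fine), and (c) is exactly the paper's window argument: a quantifier-free $\phi(x)$ only sees $b_{i}(x,x)$ for $\absval{i}\leq M$, and a point of finite spiral length $N>M$ has the same window as a point of infinite spiral length, with non-quantifier-free-definability then contradicting \Cref{LDefinableRelationsInSigma}(a) to kill quantifier elimination. Part (b) is where you genuinely diverge. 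The paper omits a type over $\emptyset$, namely $\brace{\neg b_{i}(x,x): i\neq 0}\cup\brace{\neg\exists y\exists z\,\mu(x,y,z)}$ with $\mu$ an ``M''-configuration formula; finite fragments are realized by points of large finite spiral length, while a full realization would contradict the converse-to-\Cref{MLemma} statement \Cref{PropertiesOfSigma}\ref{CanFitMs}. You instead omit a type over one parameter $x$ of parity $+1$, namely $\brace{y\sim_{f}x}\cup\brace{\sp{y,f}\neq n: n<\infty}$, with finite satisfiability from \Cref{NonZeroParityHasAllSpiralLengths} and omission from orbital-invariance of parity (\Cref{ParityOrbitalInvariant}), since parity $\pm 1$ forces finite spiral length. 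Both arguments are valid refutations of $\omega$-saturation; the paper's buys a parameter-free omitted type and ties the failure directly to the M-configuration universality built into $\Sigma$, while yours is a bit more elementary, needing only the definability of $\sim_{f}$ (\Cref{LDefinableRelationsInSigma}(d)) and of the finite spiral lengths, together with \Cref{NonZeroParityHasAllSpiralLengths}, at the mild cost of one parameter.
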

\begin{proof}
    \begin{enumerate}
        \item By \Cref{NonZeroParityHasAllSpiralLengths}, let $x_{n} \in \mathbf{P}$ with $\sp{x_{n},f} = n$ (which is definable by \Cref{BDefinableRelations}). Then the $\tp_{L} \paren{x_{n}, \emptyset}$'s are an infinite collection of distinct $1$-types, so $\operatorname{Th}_{L} \paren{\mathbf{P}_{f}}$ is not $\omega$-categorical by Ryll-Nardzewski's theorem.

        \item Let $p \paren{x}$ be the set of $L$-formulas given by: $$p \paren{x} = \brace{\neg b_{i} \paren{x,x} : i \neq 0} \cup \brace{\neg \exists y \exists z \, \mu \paren{x,y,z}},$$
        where $\mu \paren{x,y,z}$ is a formula saying that $x$, $y$, and $z$ form an ``M'' configuration as in \Cref{fig:MExample}. Then for any finite set $p_{0} \paren{x} \subseteq p \paren{x}$, we have that $p_{0} \paren{x}$ holds whenever $$\max \brace{\absval{i} : i \in \Z, \neg b_{i} \paren{x,x} \in p_{0} \paren{x}} < \sp{x,f} < \infty.$$
        Moreover, such points exist in $\mathbf{P}_{f}$ by \Cref{NonZeroParityHasAllSpiralLengths}. Thus, $p \paren{x}$ is a partial type over $\emptyset$. However, $p \paren{x}$ cannot be realized in $\mathbf{P}_{f}$, since a realization would be a point with infinite spiral length but without a witnessing ``M'' configuration, contradicting \Cref{PropertiesOfSigma}\ref{CanFitMs}.

        \item Let $\phi \paren{x}$ be a quantifier-free $L$-formula in one variable. We claim $\phi$ cannot define the relation $\sp{x,f} = \infty$, so assume $\mathbf{P}_{f} \models \phi \paren{x}$ for all $x \in \mathbf{P}$ with $\sp{x,f} = \infty$, and we will show that $\mathbf{P}_{f} \models \phi \paren{y}$ for some $y \in \mathbf{P}$ with $\sp{y,f} < \infty$.
    
        Each atomic subformula of $\phi \paren{x}$ is of the form $b_{i} \paren{x,x}$ for some $i \in \Z$, which holds if $i = 0$ and fails otherwise (since $\sp{x,f} = \infty$). Using \Cref{NonZeroParityHasAllSpiralLengths}, let $y \in \mathbf{P}$ such that $\absval{i} < \sp{y,f} < \infty$ for all $i \in \Z$ such that $b_{i} \paren{x,x}$ appears in $\phi \paren{x}$. Then $b_{i}^{f} \paren{x,x} \iff b_{i}^{f} \paren{y,y}$ for all $\absval{i} < \sp{y,f}$, and so $\mathbf{P}_{f} \models \phi \paren{y}$. \qedhere
    \end{enumerate}
\end{proof}

We remark that the failure of these properties for $\mathbf{P}_{f}$ contrasts with $\mathbf{P}$ (as a poset), which has these properties, as do many other \Fraisse{} limits in finite languages.

\section{Closing remarks}
A number of questions remain open.

\begin{qns}
    \begin{itemize}
        \item We have shown that the generic orbital quotient of $\mathbf{P}$ is isomorphic to $\mathbf{P}$ itself, and all reasonable combinations of parity and spiral length are dense in the orbital quotient. However, unlike Truss's result for $\Q$, we are unable to show that this itself characterizes the generic automorphism, even with the other properties shown to hold for generics. \textit{Can the conclusion of \Cref{GenericOrbitalQuotientIsRandom} replace any of the properties defining $\Sigma$, or the conclusions of \Cref{PropertiesOfSigma}, to obtain another characterization of the generic automorphism of $\mathbf{P}$?}
        \item Note that \Cref{GenericOrbitalQuotientIsRandom} considers $\OQ{f}{\mathbf{P}}$ as a partial order with respect to the strong order $<_{f}^{s}$. The same result \emph{as written} does not hold for $<_{f}^{w}$. Indeed, the set of parity-$0$ orbitals with spiral length $1$ (that is, fixed points) cannot be dense in this order, since for any $\orb{f}{x} <_{f}^{w} \orb{f}{y}$ with $\orb{f}{x} \nless_{f}^{s} \orb{f}{y}$, a fixed point between them would contradict \Cref{FixedPointBetweenForcesStronglyLess}. \textit{Does some natural analogue of \Cref{GenericOrbitalQuotientIsRandom} hold when $\OQ{f}{\mathbf{P}}$ is given the weak order $<_{f}^{w}$? If so, can this be used in an explicit characterization of the generic automorphism of $\mathbf{P}$?}
        \item By \Cref{PropertiesOfSigma}, $\mathbf{P}_{f}$ exhibits enough universal-homogeneity to force certain configurations to occur. Indeed, we have not ruled out that the other properties defining $\Sigma$ can be similarly derived. \textit{Is the given list of properties characterizing the generic automorphism of $\mathbf{P}$ minimal?}
    \end{itemize}
\end{qns}

Finally, we discuss applications to other structures. Suppose $\mathbf{K}$ is the limit of another \Fraisse{} class, such that $\Aut{\mathbf{K}}$ admits generic elements. Can we associate to each $f \in \Aut{\mathbf{K}}$ an expanded structure $\mathbf{K}_{f}$ whose description can characterize the generic automorphisms? Such a structure must be inter-definable with $\paren{\mathbf{K},f}$ --- the result of adjoining a single function symbol to $\mathbf{K}$ --- so that the isomorphism type of $\mathbf{K}_{f}$ reflects the conjugacy relation in $\Aut{\mathbf{K}}$. In the case of $\mathbf{P}$, this used antisymmetry of partial orders in a seemingly unavoidable way. We also may expect that such a structure may exhibit some universal-homogeneity property, in the same way our $\mathbf{P}_{f}$ does.

\bibliographystyle{alpha}
\bibliography{refs}

\end{document}